\documentclass[12pt,a4paper]{article}

\usepackage[utf8]{inputenc}
\usepackage{amsfonts}
\usepackage{amsmath,amsthm}
\usepackage{amssymb}
\usepackage{bm}
\usepackage{upgreek}
\usepackage{xcolor}  
\usepackage[aligntableaux=center]{ytableau}
\usepackage[left=2.5cm,right=2.55cm]{geometry}
\usepackage{hyperref}

\usepackage{biblatex}
\hypersetup{unicode}
\hypersetup{breaklinks=true}
\hypersetup{colorlinks=true,linkcolor=blue,citecolor=red,urlcolor=violet,
bookmarksdepth=subsection,final}

\def\dd{\mathrm{d}}
\def\pd{\partial}
\def\eps{\varepsilon}
\def\fii{\varphi}
\renewcommand\pounds{\mathsterling}
\def\Lie{\pounds}
\def\ii{\mathrm{i}}
\def\man{\mathcal{M}}
\def\Nat{\mathbb{N}}
\def\TT{\mathcal{T}}
\def\RR{\mathcal{R}}

\def\youngboxsize{13pt}
\def\youngfontsize{9pt}
\def\youngtinyboxsize{6pt}
\def\youngtinyfontsize{6pt}
\def\youngbig(#1){{\ytableausetup{boxsize=\youngboxsize}\fontsize{\youngfontsize}{\youngfontsize}\selectfont\ytableaushort{#1}}}
\def\young(#1){{\ytableausetup{boxsize=\youngtinyboxsize}\fontsize{\youngtinyfontsize}{\youngtinyfontsize}\selectfont\ytableaushort{#1}}}
\def\yngbig(#1){{\ytableausetup{boxsize=\youngboxsize}\ydiagram{#1}}}
\def\yng(#1){{\ytableausetup{boxsize=\youngtinyboxsize}\ydiagram{#1}}}

\newtheorem{theorem}{Theorem}
\newtheorem{definition}{Definition}
\newtheorem{lemma}{Lemma}
\newtheorem{proposition}{Proposition}
\theoremstyle{remark}
\newtheorem{remark}{Remark}
\theoremstyle{plain}

\makeatletter
\renewcommand{\qed}{ $\blacksquare$ }
\renewenvironment{proof}[1][\proofname]{\par
  \pushQED{\qed}%
  \normalfont \topsep6\p@\@plus6\p@\relax
  \footnotesize%
  \trivlist
  \item[\hskip\labelsep
        \normalfont\bfseries
    #1\@addpunct{.}]\ignorespaces
}{%
  \hfill\popQED\endtrivlist\@endpefalse
}
\makeatother

\begin{document}

\title{Structure of the Riemann tensor in higher-dimensional Kerr-NUT-(A)dS spaces}

\author{David Matejov$^{*\dagger}$
       \quad and \quad
        Igor Khavkine$^{**\dagger\dagger}$ 	\\[4mm]
	{\small $^*$Institute of Theoretical Physics, Faculty of Mathematics and Physics,} \\
	{\small Charles University, V~Hole\v{s}ovi\v{c}k\'ach~2, 180~00 Prague 8, Czech Republic
	} \\[3mm]
	{\small $^{**}$Institute of Mathematics, Czech Academy of Sciences,} \\
	{\small \v{Z}itn{\'a} 25, 115 67 Praha 1, Czech Republic} \\[3mm]
    {\small  E-mail: $^\dagger$\texttt{d.matejov@gmail.com, $^{\dagger\dagger}$khavkine@math.cas.cz}}\\}

\maketitle

\begin{abstract}
We study the algebraic and differential structure of the Riemann
curvature tensor of the higher-dimensional Kerr-NUT-(A)dS spaces,
motivated by the eventual goal of finding an IDEAL (Intrinsic,
Deductive, Explicit and ALgorithmic) characterization of this family of
metrics. The special geometry of these spaces is governed by the
principal tensor $h_{ab}$, a non-degenerate closed conformal
Killing-Yano $2$-form, whose existence singles out the Kerr-NUT-(A)dS
family; we therefore focus on the algebraic relationship between the
Riemann curvature $R_{abcd}$ and $h_{ab}$. In our investigations we
encountered an obstruction, which results in a no-go theorem: no
non-trivial $2$-form, including $h_{ab}$ itself, can be constructed
covariantly from the undifferentiated Riemann tensor alone. Despite
that, we do characterize the family of Riemann-symmetric tensors that
could be the curvature of a Kerr-NUT-(A)dS metric by an algebraic and a
differential condition, stemming from the integrability of the defining
equation of $h_{ab}$ and the second Bianchi identity. Our calculations
apply in all higher dimensions, which becomes feasible by a judicious
application of representation theoretic techniques related to a
semi-direct product group $U(1)^n\rtimes S_n$ that stabilizes $h_{ab}$.
\end{abstract}

\vfil\noindent
PACS class:  04.20.Jb, 04.70.Bw, 04.50.Gh, 02.40.-k


\bigskip\noindent
Keywords: Kerr-NUT-(A)dS spacetimes; conformal Killing-Yano tensor; Riemann
curvature tensor; IDEAL characterization

\section{Introduction}

The higher-dimensional geometries known as \emph{Kerr-NUT-(A)dS}
spaces are, as the name suggests, a generalization of the
corresponding well-known $4$-dimensional family of exact solutions of
the (cosmological) vacuum Einstein equations~\cite{ChenLuPope2006}. It is most natural to
introduce this family \emph{off-shell}, as a highly-symmetric metric
ansatz~\eqref{eq:HigherDimensionalKNAdSmetric} containing several free
functions. Imposing the vacuum Einstein equations
(with $\Lambda$) then fixes these functions to a specific polynomial
form and puts the metric \emph{on-shell}. The most symmetric form of the
metric is in Euclidean signature, but could be analytically continued
to any signature, including the physical Lorentzian signature. Both on-
and off-shell, the family exhibits remarkably rich geometric structure:
a whole tower of Killing and Killing-Yano tensors, and the separability
and complete integrability of the geodesic equation and of the linear
test field equations (scalar, spinor, electromagnetic). It also contains
all the well-known higher-dimensional black hole solutions (the spherically
symmetric Schwarzschild-Tangherlini and the rotating Myers-Perry
metrics~\cite{MyersPerry1986}, with or without a cosmological
constant~\cite{GibbonsLuPagePope2005}) as special cases,
which is one reason these geometries also arise in string-theory and
supergravity contexts. All of this special structure is tied to the
existence of a single object, the \emph{principal tensor} $h_{ab}$, a
non-degenerate closed conformal Killing-Yano $2$-form. The
Kerr-NUT-(A)dS family and its properties are surveyed in detail in the
review~\cite{FKK}.

In this work we study the algebraic and differential structures of the
Riemann curvature tensor $R_{abcd}$ of Kerr-NUT-(A)dS spaces, motivated
by the possibility that they would be useful to find a so-called IDEAL characterization of this family of metrics.

\paragraph{IDEAL characterization: background and motivation.}
It is already known that the curvature tensor of Kerr-NUT-(A)dS metrics is of \emph{algebraic
type~D} with respect to an aligned null frame~\cite{MTCh,
OrtaggioPravdaPravdova}, in the sense of the higher-dimensional version
of the Petrov classification~\cite{OrtaggioPravdaPravdova}. We approach
the subject, however, from a different angle. Our long-term motivation
is to take some steps towards an \emph{IDEAL%
	\footnote{IDEAL stands for Intrinsic, Deductive, Explicit and ALgorithmic,
	see~\cite{Ferrando2010} (footnote, p.2) for the source of the
	acronym.} %
characterization} of this family. Such a characterization consists of a
list $\{T_\lambda[g]\}_\lambda$ of tensors constructed covariantly and
concomitantly from the metric $g_{ab}$ and the curvature $R_{abcd}$
(and its covariant derivatives), with the defining property that all of
them vanish, $T_\lambda[g] = 0$, on a neighborhood of a point if and
only if that neighborhood is locally isometric to a reference geometry
(a fixed metric or a whole family thereof). Crucially, the conditions
$T_\lambda[g]=0$ do not refer to any auxiliary structure such as a
preferred frame. In particular, the higher-dimensional Petrov type,
which is defined relative to an externally specified aligned null frame,
cannot by itself serve as an IDEAL condition.

There are several approaches to intrinsic characterization of metric
geometries. The dominant one relies on the Cartan-Karlhede method,
which builds a characterization by progressively adapting a null frame
to the geometry~\cite[Ch.9]{Stephani}; its main strength is that it is
systematic. The IDEAL approach, by contrast, does not yet rest on a
general theory and successful characterizations have so far relied on
ad-hoc constructions. Its advantages nevertheless make it worth
pursuing. The most obvious is the conceptual simplicity of \emph{testing}
whether a given metric---perhaps written in unfamiliar
coordinates---is locally isometric to a reference geometry: one simply
evaluates the tensors $T_\lambda[g]$ and checks that they all vanish.
In numerical relativity these equalities can be satisfied only
approximately, $T_\lambda[g]\approx 0$, or in a limit, $T_\lambda[g]\to
0$, so the tensors can be used to quantify the closeness or convergence
of a dynamical spacetime to a reference geometry, an idea explored
in~\cite{okounkova2018, okounkova2020}. A second advantage arises in
linearized gravity: by the Stewart-Walker lemma~\cite[Lem.2.2]{StewartWalker},
the vanishing $T_\lambda[g]=0$ on a background $g_{ab}$ implies that the
linearization $\dot{T}_\lambda[h]$ is invariant under linearized
diffeomorphisms, so the $\dot{T}_\lambda[h]$ furnish local gauge
invariant observables and their joint kernel is a candidate for a local
characterization of the pure gauge modes $h_{ab}=\Lie_v g_{ab}$; this
has been explored for various cosmological and black hole
backgrounds~\cite{Canepa, FrobHackHiguchi, Frob2018, Khavkine2019Compat,
AksteinerBackdahl}.

\paragraph{Approach: from 4D to higher dimensions.}
In four dimensions, the Schwarz\-schild and Kerr black holes already have
known IDEAL characterizations~\cite{FerrandoSaez2009}
(cf.~\cite[Sec.III]{GarciaParrado2015} and
\cite[Sec.1.5.1]{MatejovThesis} for simplified presentations). Their key
step is the classical type~D condition, expressed frame-independently
through the vanishing of a characteristic polynomial of the self-dual
Weyl tensor acting as an endomorphism on $2$-forms, together with the
(cosmological) vacuum condition. These already force the metric,
locally, into the finite-parameter Pleba\'{n}ski-Demia\'{n}ski family, and
the characterization is then completed by the use of specific curvature scalars that
fix or eliminate the remaining parameters. Carrying this strategy to
higher dimensions runs into two obstructions. First, there is no known
test for type~D that avoids explicit reference to the aligned null frame
(the best available are the Bel-Debever type
conditions~\cite{Ortaggio2009}). Second, there is no known
higher-dimensional analog of the Pleba\'{n}ski-Demia\'{n}ski family
exhausting the type~D (cosmological) vacua. An alternative starting
point is therefore needed.

A natural and sufficiently strong condition is the very existence
of the principal tensor $h_{ab}$. It is known~\cites{Houri2007}[Sec.5.3]{FKK}
that a principal tensor exists precisely on the Kerr-NUT-(A)dS family, so
one expects the structure of the curvature $R_{abcd}$ to be tightly
linked to that of $h_{ab}$. If this link could be inverted---that is,
if $h_{ab}$ could be recovered covariantly from $R_{abcd}$ and its
derivatives---then substituting the resulting formula into the
equations defining a principal tensor would already yield an IDEAL
characterization of the family. The central question of this work is
thus to understand the algebraic relationship between the Riemann tensor
$R_{abcd}$ and the principal tensor $h_{ab}$.

\paragraph{Summary of results.}
Our results, which may be seen as intermediate steps towards an IDEAL
characterization, can be summarized as follows. To simplify the
calculations, we always work in Euclidean signature, but due to the
origin of these geometries in General Relativity, we will use
\emph{space} and \emph{spacetime} interchangeably. As a maximal-rank
$2$-form on a $D=2n+\varepsilon$ dimensional space ($\eps=0,1$), the
principal tensor decomposes pointwise into $n$ mutually orthogonal
rank-$2$ blocks $h_{\mu\,ab}$ ($\mu=1,\ldots,n$). Each block is
invariant under an independent $U(1)$ rotation, and the blocks may be
permuted among themselves; together these generate the semi-direct
product stabilizer group $U(1)^n\rtimes S_n$ that governs the whole
relationship between $h_{ab}$ and the curvature (in Lorentzian signature
the stabilizer group would have to be slightly modified).

Using the explicit curvature of Kerr-NUT-(A)dS computed
in~\cite{Hamamoto2007}, we first show (Theorem~\ref{the:RiemannInBasis})
that $R_{abcd}$ lies in a distinguished ``totally zero weight'' subspace
singled out by this stabilizer group, where it takes a compact form
built from symmetrized products of the blocks $h_{\mu\,ab}$ and their
symmetric squares $Q_{\mu\,ab}=h_{\mu\,a}{}^{c}h_{\mu\,bc}$. On the
negative side, we prove a no-go result
(Theorem~\ref{th:NoTwoFormFromRiemann}): no non-trivial $2$-form, in
particular including $h_{ab}$ itself, can be built covariantly from the
undifferentiated Riemann tensor alone. This unfortunately closes the
simplest route to an IDEAL characterization.

We then bring in the integrability condition of the conformal
Killing-Yano equation, which the curvature must satisfy by virtue of the
existence of $h_{ab}$. Read as a linear constraint
$\mathcal{I}_{abcd}{}^{ijkl} r_{ijkl}=0$ (with coefficients built from
$h_{ab}$) on the potential curvature $r_{ijkl}$, we show
(Theorems~\ref{the:GeneralizedIntegrability}
and~\ref{th:IntegrabilityCondition}) that it forces the curvature
pointwise into the same totally zero weight subspace and reduces it to a
family $r(d)_{abcd}$ with free parameters $d_1,\ldots, d_n$. Finally
(Theorems~\ref{the:IIBianchiIdentity} and~\ref{th:SolutionOfdEquation}),
show that imposing the (differential) II.~Bianchi identity $\nabla_{[a}
r(d)_{bc]de} = 0$ fixes the $d_\mu$ parameters enough for $r(d)_{abcd}$
to coincide with the curvature of a Kerr-NUT-(A)dS metric (though not
necessarily the same one as the underlying background geometry). This
last step still presupposes that the covariant derivative is the
Levi-Civita connection of a fixed Kerr-NUT-(A)dS background, so the
conclusion is not yet an IDEAL characterization but an intermediate step
towards one. We defer ideas about how these results might be assembled
into a complete characterization to the Discussion.

Where necessary, we have made liberal use of the \texttt{xAct}~\cite{xAct}
tensor algebra package for the \textsc{Mathematica} computer algebra system.

\paragraph{Summary of contents.}
The article is organized as follows. Section~\ref{sec:prelim} collects
the necessary background on higher-dimensional Kerr-NUT-(A)dS
spaces: the off-shell metric, the principal tensor, the Darboux
frame, and the explicit connection and curvature of~\cite{Hamamoto2007}.
Section~\ref{sec:GroupTheory} reviews the group-theoretic tools we
use---basic representation theory, Young diagrams, decomposition of
tensors into invariant parts. Section~\ref{sec:InvarProperties}
introduces the stabilizer group $U(1)^n\rtimes S_n$ of the principal
tensor, the associated notion of weight, and the corresponding
decomposition of Riemann-symmetric tensors, culminating in the compact
expression for the Kerr-NUT-(A)dS curvature
(Theorem~\ref{the:RiemannInBasis}). Section~\ref{sec:TwoFormFromRiemann}
proves the no-go theorem for constructing a $2$-form from the
undifferentiated curvature. Section~\ref{sec:GeneralizedIC} analyzes the
integrability condition of the conformal Killing-Yano equation as a
constraint on the curvature. Section~\ref{sec:IIBianchiIdentity} adds
the second Bianchi identity and integrates the resulting differential
constraint. We conclude in Section~\ref{sec:discuss} with a discussion
and an outlook to future work.

\section{Preliminaries} \label{sec:prelim}

In this section we summarize the main results about higher-dimensional Kerr-NUT-(A)dS spacetimes that are relevant for our study. More details can be conveniently found in the review \cite{FKK}. Throughout the article we work in Euclidean signature, in dimension $D = 2n + \eps$, where $\eps =0$ for even dimensions and $\eps=1$ for odd dimensions.

The \emph{off-shell} Kerr-NUT-(A)dS metric in all dimensions reads
\begin{align} \label{eq:HigherDimensionalKNAdSmetric}
ds^2 = \sum_{\mu=1}^n \left[ \frac{U_\mu}{X_\mu}\dd x_\mu^2 + \frac{X_\mu}{U_\mu}\left( \sum_{k=0}^{n-1} A^{(k)}_\mu \dd\psi_k \right)^2 \right]
+ \eps \frac{c}{A^{(n)}} \left( \sum_{k=0}^{n} A^{(k)} \dd\psi_k \right)^2,
\end{align}
where the metric functions are
\begin{equation} \label{eq:MetricFunctions}
A^{(k)} = \sum_{\nu_1< \dots <\nu_k} x^2_{\nu_1} \dots x^2_{\nu_k}, \quad
A^{(k)}_\mu = \sum_{\substack{\nu_1< \dots <\nu_k \\ \nu_i \neq \mu}} x^2_{\nu_1} \dots x^2_{\nu_k}, \quad
U_\mu = \prod_{\substack{\nu\\ \nu \neq \mu}} (x_\nu^2 - x_\mu^2).
\end{equation}
The constant $c$ is a free parameter. We recognize two types of coordinates - \emph{radial and longitudinal} coordinates $x_\mu$ and \emph{Killing} coordinates $\psi_k$. The metric functions $X_\mu$ are arbitrary functions of the corresponding sing\-le coordinate $X_\mu = X_\mu(x_\mu)$. Their precise form follows from the Einstein field equations and can be chosen as 
\begin{align*}
	X_\mu = \begin{cases}
		\displaystyle \sum_{k=0}^n c_k x_\mu^{2k} - 2 b_\mu x_\mu & \text{for D even},\\
		\displaystyle \sum_{k=1}^n c_k x_\mu^{2k} - 2 b_\mu - \frac{c}{x_\mu^2} & \text{for D odd,}
	\end{cases}
\end{align*}
in which case we say that the metric is \emph{on-shell}.

Different coordinate ranges and values of the $X_\mu$ functions can change the signature of the metric. The highly symmetric form \eqref{eq:HigherDimensionalKNAdSmetric} with real $X_\mu$'s, is capable of representing \emph{higher dimensional (pseudo)-spheres} and \emph{Euclidean instantons} (see \cite[Sec.4.3]{FKK} for specific signs and ranges of coordinates). We will always assume that we are working in coordinate ranges where the Euclidean signature holds.
To obtain black-hole metrics with a Lorentzian signature, one has to employ a \emph{Wick rotation} and assume that some coordinates and constants are imaginary at the beginning, which for us will be unnecessary. Details can be found in \cite[Sec.4.4]{FKK}.

The special significance of Kerr-NUT-(A)dS spacetime lies in its rich (hidden) symmetry structure encoded in the \emph{non-degenerate closed conformal Killing-Yano form} or the \emph{principal tensor} $h_{ab}$, which obeys
\begin{align} \label{eq:PrincipalTensorEquation}
\nabla_{a} h_{bc} = g_{ab} \xi_c - g_{ac} \xi_b, \qquad \xi_{a} = \frac{1}{D-1}\nabla^b h_{ba}.
\end{align}
The \emph{primary vector} $\xi_a$ is a Killing vector, $\nabla_{(a}\xi_{b)} = 0$.
The symmetric tensor $Q_{ab} \equiv h_{ac} h_{b}{}^{c}$ is a \emph{conformal Killing tensor},
\begin{align} \label{eq:KillingTensorQ}
\nabla_{(a} Q_{bc)} = 2g_{(ab} \eta_{c)} \equiv 2g_{(ab} h_{c)d} \xi^d.
\end{align}
Higher order Killing tensors and conformal Killing tensors can be generated from $h_{ab}$ as well using the Hodge dual and wedge product, therefore, $h_{ab}$ is the most important object from this group \cite[Sec.2.8]{FKK}.

The symmetries of the spacetime are better revealed and easier studied in a so-called \emph{Darboux basis}, which is a non-holonomic orthonormal frame of one-forms, in which the metric is diagonal 
\begin{align} \label{eq:MetricInDarbouxFrame}
g = \sum_{\mu=1}^n (e^\mu \otimes e^\mu + \hat{e}^\mu \otimes \hat{e}^\mu) + \hat{e}^0 \otimes \hat{e}^0,
\end{align}
and the principal tensor acquires the simple block diagonal form
\begin{align} \label{eq:PrincipalTensor}
h = \sum_{\mu=1}^n \, x_\mu e^\mu \wedge \hat{e}^\mu, \quad \Leftrightarrow \quad 
(h) = \left( \begin{matrix}
B_1 && 0 && \dots \\
0 && B_2 && \dots \\
\vdots && \vdots && \ddots
\end{matrix} \right), \quad
B_i = \left( \begin{matrix}
0 && x_i \\
-x_i && 0
\end{matrix} \right).
\end{align}
Such frame exists for every two-form. 
Here and further in the text, we employ the usual notation $e^{\mu + n} \equiv \hat{e}^n, e^{2n+1} \equiv \hat{e}^0$. 
For the metric \eqref{eq:HigherDimensionalKNAdSmetric} the Darboux basis explicitly reads
\begin{align} \label{eq:DarbouxFrameDef}
e^\mu = \sqrt{\frac{U_\mu}{X_\mu}} \dd x_\mu, \quad
\hat{e}^\mu = \sqrt{\frac{X_\mu}{U_\mu}} \sum_{k=0}^{n-1} A^{(k)}_\mu \dd\psi_k, \quad
\hat{e}^0 = \varepsilon \sqrt{\frac{c}{A^{(n)}}} \sum_{k=0}^n A^{(k)} \dd\psi_k.
\end{align}
The conformal Killing tensor \eqref{eq:KillingTensorQ} takes form
\begin{align} \label{eq:ConformalKillingTensorQ}
Q = \sum_{\mu=1}^n x_{\mu}^2 (e^\mu \otimes e^\mu + \hat{e}^\mu \otimes \hat{e}^\mu).
\end{align}

\begin{remark} \label{rem:GreekIndexConvention}
We use the Greek indices to label vectors of the Darboux frame and associated quantities. Repeated Greek indices are \emph{not} summed over, sums are always explicitly written. The range of the Greek indices is $\{1\dots n\}$, where the dimension of the spacetime is $D= 2n+\varepsilon$. 
\end{remark}

Existence of the principal tensor $h_{ab}$ is strongly restrictive for the spacetime. It even turns out that the spacetime is necessarily Kerr-NUT-(A)dS as the following theorem shows \cite[Thm.1]{Houri2007}, \cite[Thm.1]{Houri2009}:
\begin{proposition} \label{th:Uniqueness}
Let $h_{ab}$ be the principal tensor \eqref{eq:PrincipalTensorEquation} in $D$-dimensional spacetime $(\man,g)$, which satisfies
\begin{align}
\dd h = 0, \quad \Lie_\xi g = 0, \qquad \Lie_\xi h = 0.
\end{align}
Then locally (on any open, connected, simply-connected domain)%
	\footnote{The \emph{local} qualification is not given in~\cite{Houri2007, Houri2009}, but the main step in their integration uses the Frobenius theorem, which without further hypotheses only gives local results.} %
$(\man,g)$ is only the higher-dimensional Kerr-NUT-(A)dS spacetime. 
\end{proposition}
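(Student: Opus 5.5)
The plan is to integrate the principal tensor equation directly, following the strategy of Houri--Oota--Yasui, and show that the metric is forced into the form \eqref{eq:HigherDimensionalKNAdSmetric}. The steps, in order, are as follows.

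\textbf{Step 1: the Darboux frame and the radial functions.} Non-degeneracy means that on an open dense set $h_{ab}$ has $n$ functionally independent eigenvalues $\pm \ii x_\mu$ of $h^a{}_b$. Shrinking the domain if necessary, I would assume these are pairwise distinct, nonzero, and have $\dd x_\mu \neq 0$. This yields a smooth local Darboux frame $(e^\mu,\hat e^\mu,\hat e^0)$ with $h=\sum_\mu x_\mu e^\mu\wedge\hat e^\mu$ as in \eqref{eq:PrincipalTensor}.

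Differentiating the eigenvalue equation $Q^a{}_b v^b = x_\mu^2 v^a$ and inserting \eqref{eq:PrincipalTensorEquation} gives $\nabla_a x_\mu^2$. The key conclusion is that $\dd x_\mu$ is proportional to $e^\mu$, so $e^\mu = \dd x_\mu/\sqrt{P_\mu}$ for some function $P_\mu$. In the same way, $\hat e^\mu$ is proportional to $h(\cdot,\nabla x_\mu)$, which in turn is proportional to $\xi$-type combinations.

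\textbf{Step 2: Killing vectors and coordinates.} From $\Lie_\xi g=\Lie_\xi h=0$ together with \eqref{eq:KillingTensorQ}, I would generate the tower of Killing vectors $\xi^{(k)} = K^{(k)}\cdot\xi$. Here the $K^{(k)}$ are the Killing tensors built from the coefficients of the characteristic polynomial of $Q$, namely $A^{(k)}$ evaluated in the $x_\mu$. I would then show that these vectors mutually commute and commute with $\nabla x_\mu$. Frobenius then gives local coordinates $(x_\mu,\psi_k)$ with $\xi^{(k)}=\partial_{\psi_k}$.

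\textbf{Step 3: solving for the metric.} With the coordinates in hand, I would express $\hat e^\mu$ and $\hat e^0$ in terms of $\dd\psi_k$ by inverting the Vandermonde-type system, which produces the $A^{(k)}_\mu$ and $U_\mu$. The remaining unknowns are the functions $P_\mu$, written as $X_\mu/U_\mu$.

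Closedness $\dd h=0$ together with $\dd(\hat e\text{-combinations})$ then forces $\partial_\nu X_\mu=0$ for $\nu\ne\mu$, so that $X_\mu=X_\mu(x_\mu)$. This is exactly \eqref{eq:HigherDimensionalKNAdSmetric}. In odd dimension the extra component yields a constant $c$.

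\textbf{Expected main obstacle.} The hard part is Step 2, together with the separation argument in Step 3. Proving that the $\xi^{(k)}$ commute requires careful use of the integrability condition of \eqref{eq:PrincipalTensorEquation}, i.e.\ the Killing property of $\xi$ and of its derived vectors. For this I would rely on the computations already done in \cite{Houri2007,Houri2009}, and check that only local Frobenius integrability is used. That check justifies the ``locally'' qualifier in the statement.
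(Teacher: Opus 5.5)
The paper gives no proof of this proposition. It quotes Houri--Oota--Yasui and adds only the footnote that their integration is local. Your choice to follow that integration and defer the heavy computations to \cite{Houri2007,Houri2009} is therefore in line with the paper.

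However, the mechanism you give in Step~3 for the separation $X_\mu=X_\mu(x_\mu)$ cannot work. Write $M_\mu=\sqrt{P_\mu}=\sqrt{X_\mu/U_\mu}$, so that $e^\mu=\dd x_\mu/M_\mu$ and $\hat e^\mu=M_\mu\sum_k A^{(k)}_\mu\,\dd\psi_k$. The factors $M_\mu$ then cancel in $e^\mu\wedge\hat e^\mu=\dd x_\mu\wedge\sum_k A^{(k)}_\mu\,\dd\psi_k$. Since $\pd_\mu A^{(k+1)}=2x_\mu A^{(k)}_\mu$, one has $h=\dd\big(\tfrac12\sum_k A^{(k+1)}\dd\psi_k\big)$ identically, for \emph{arbitrary} $X_\mu(x_1,\dots,x_n)$. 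Likewise $\hat e^\mu/M_\mu$ contains no $X_\mu$. So neither $\dd h=0$ nor the closedness of $\hat e$-combinations says anything about $X_\mu$.

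The missing input is the part of \eqref{eq:PrincipalTensorEquation} you have not used: the vanishing of the trace-free part of $\nabla h$.
\begin{itemize}
\item For $\mu\neq\nu$ the right-hand side gives $(\nabla_{e_\nu}h)(e_\mu,\hat e_\nu)=0$.
\item The left-hand side is $-x_\nu\upomega_{\nu\mu}(e_\nu)-x_\mu\upomega_{n+\mu,n+\nu}(e_\nu)$.
\item For your ansatz with general $X_\nu$ one finds $\upomega_{\nu\mu}(e_\nu)=-M_\mu\pd_\mu\ln M_\nu$ and $\upomega_{n+\mu,n+\nu}(e_\nu)=-x_\nu M_\mu/(x_\mu^2-x_\nu^2)$.
\item Hence this component equals $\frac{x_\nu M_\mu}{2X_\nu}\pd_\mu X_\nu$, and its vanishing is exactly $\pd_\mu X_\nu=0$.
\end{itemize}
It reproduces the paper's $V_{\mu\nu},U_{\nu\mu}$ when $X_\nu=X_\nu(x_\nu)$.

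A related step is missing between Steps~2 and~3, and the rotation coefficients above already presuppose it.
\begin{itemize}
\item The commutation of $\xi^{(k)}$ with $\nabla x_\mu$ is automatic, since each $\xi^{(k)}$ is Killing and satisfies $\xi^{(k)}(x_\mu)=0$.
\item It only yields coordinates with $\xi^{(k)}=\pd_{\psi_k}$ up to $\psi_k\mapsto\psi_k+f_k(x)$. In such coordinates $\hat e^\mu$ and $\hat e^0$ may still contain $\dd x_\nu$ pieces.
\item So the Vandermonde inversion only gives $\hat e^\mu=M_\mu\sum_k A^{(k)}_\mu\theta^k$. Here $\theta^k$ are the one-forms dual to the $\xi^{(k)}$ that annihilate all $e_\mu$; this is not yet $\theta^k=\dd\psi_k$.
\end{itemize}
You must show $\dd\theta^k=0$. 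Equivalently, the distribution spanned by the $e_\mu$ must be integrable (orthogonal transitivity of the Killing action). This does not follow from commutativity and has to be extracted from the connection fixed by the CCKY equation. The Poincar\'e lemma then gives $\theta^k=\dd\psi_k$, which is where the simply-connectedness in the statement enters.
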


The covariant derivative of an arbitrary spacetime function $f$, which is constant along Killing vector orbits, can be expressed in the following way
\begin{align*}
\nabla f = \sum_{\mu=1}^n \partial_\mu f \, \dd x_\mu = \sum_\mu \partial_\mu f \sqrt{\frac{X_\mu}{U_\mu}} e^\mu \equiv \sum_\mu M_\mu \partial_\mu f e^\mu, \quad M_\mu \equiv \sqrt{\frac{X_\mu}{U_\mu}},
\end{align*}
where we used that $\psi_k$ are the Killing coordinates and the definition of the Darboux frame \eqref{eq:DarbouxFrameDef}.
The covariant derivative of a vector of an orthonormal frame is captured by the rotation one-forms $\upomega^b{}_a$ \cite{MTW}:
\begin{align} \label{eq:RotationOneFormDef}
\nabla e_a \equiv e_b \, \upomega^b{}_{a}.
\end{align}
They are governed by the first structure equation $\dd e^a + \upomega^a_{~b} \wedge e^b = 0$. With one index lowered, they are antisymmetric $\upomega_{ab} = - \upomega_{ba}$. Using these two properties, $\upomega_{ab}$ can be found explicitly. For the Kerr-NUT-(A)dS spacetime, they were calculated in \cite[Secs.2--3]{Hamamoto2007}.

\begin{proposition} \label{lem:RotOneForms}
The rotation one-forms of the Darboux frame defined by \eqref{eq:RotationOneFormDef} are 
\begin{align}
\upomega_{\mu \nu} &= V_{\nu \mu} e^\mu - V_{\mu \nu} e^\nu \quad \qquad \qquad
\upomega_{n+\mu, n+\nu} = -U_{\mu \nu} e^\mu + U_{\nu \mu} e^\nu \nonumber \\
\upomega_{\mu, n+\nu} &= \delta_{\mu \nu} \big(-\pd_\mu M_\mu \hat{e}^\mu + \sum_{\rho} U_{\mu \rho} \hat{e}^\rho + s_\mu \hat{e}^0 \big) - V_{\mu \nu} \hat{e}^\nu + U_{\mu \nu} \hat{e}^\mu, \label{eq:RotOneForms} \\
\upomega_{0 \mu} &= - s_\mu \hat{e}^\mu + \varepsilon \frac{M_\mu}{x_\mu} \hat{e}^0, \qquad \qquad
\upomega_{0, n+\mu} = s_\mu e^\mu, \nonumber
\end{align}
where we introduced auxiliary functions\footnote{Notation for a special function $U_{\mu\nu}$ is similar to notation of metric functions $U_\mu$. The reader can distinguish them easily by number of indices. Functions $U_\mu$ will almost always remain hidden inside $M_\mu$.}
\begin{align} \label{eq:FunctionsUV}
V_{\mu \nu} \equiv (1-\delta_{\mu \nu}) \frac{x_\mu}{x^2_\mu - x^2_\nu}M_\mu, \quad
U_{\mu \nu} \equiv (1-\delta_{\mu \nu}) \frac{x_\mu}{x^2_\mu - x^2_\nu}M_\nu, \quad
s_\mu \equiv \varepsilon \frac{1}{x_\mu} \sqrt{\frac{c}{A^{(n)}}}.
\end{align}
\end{proposition}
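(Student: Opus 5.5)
The plan is to use Cartan's first structure equation, $\dd e^a + \upomega^a{}_b\wedge e^b = 0$, together with the antisymmetry $\upomega_{ab}=-\upomega_{ba}$ of the connection in an orthonormal frame. These two conditions determine $\upomega_{ab}$ uniquely. Concretely, if we write $\dd e^a = -\tfrac12 C^a{}_{bc}\, e^b\wedge e^c$, then
\begin{align*}
\upomega_{abc} = \tfrac12\left(C_{abc} - C_{bca} + C_{cab}\right), \qquad \upomega_{ab} = \upomega_{abc}\, e^c ,
\end{align*}
up to the sign conventions fixed by \eqref{eq:RotationOneFormDef}. It therefore suffices either to compute the structure functions $C^a{}_{bc}$ and assemble them, or to verify directly that the one-forms \eqref{eq:RotOneForms} are antisymmetric and satisfy the structure equation. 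I will take the second route, since antisymmetry is manifest from the listed formulas after using $\upomega_{n+\nu,\mu}=-\upomega_{\mu,n+\nu}$ and similar relations.

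\textbf{Step 1: exterior derivatives of the frame.} Compute $\dd e^a$ for the frame \eqref{eq:DarbouxFrameDef}. For $e^\mu=M_\mu^{-1}\dd x_\mu$, only the dependence of $U_\mu$ on the other coordinates $x_\nu$ contributes. Using $\pd_\nu \ln U_\mu = 2x_\nu/(x_\nu^2-x_\mu^2)$ for $\nu\neq\mu$, we get $\dd e^\mu=\sum_{\nu\ne\mu}(\ldots)\,e^\nu\wedge e^\mu$. The coefficient involves $x_\nu M_\nu/(x_\nu^2-x_\mu^2)$, which is the combination $V_{\nu\mu}$ of \eqref{eq:FunctionsUV}.

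For $\hat e^\mu = M_\mu \sum_k A^{(k)}_\mu \dd\psi_k$, we differentiate both $M_\mu$ and $A^{(k)}_\mu$. Then we re-express $\sum_k \pd_\nu A^{(k)}_\mu\, \dd\psi_k$ in terms of the $\hat e^\rho$. This uses the standard identities $A^{(k)}_\mu = A^{(k)}_{\mu\nu}+x_\nu^2A^{(k-1)}_{\mu\nu}$ and the inverse relation
\begin{align*}
\sum_\mu \frac{(-x_\mu^2)^{n-1-j}}{U_\mu}A^{(k)}_\mu=\delta_{jk} .
\end{align*}
Together these give $\dd\psi_k$ as a combination of the $\hat e^\rho$. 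The analogous computation handles $\hat e^0$ when $\eps=1$. In that case the $A^{(n)}$ factor produces the terms containing $s_\mu=\eps\sqrt{c/A^{(n)}}/x_\mu$, and the $\eps M_\mu/x_\mu$ term in $\upomega_{0\mu}$.

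\textbf{Step 2: match coefficients.} Substitute the candidate $\upomega$'s into $\upomega^a{}_b\wedge e^b$ and compare, component by component, with $-\dd e^a$. The comparison splits into cases according to index type: $\mu$, $n+\mu$, and $0$, and within each type into $\nu=\mu$ versus $\nu\ne\mu$. Uniqueness of the torsion-free metric connection then finishes the proof.

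\textbf{Main obstacle.} The hardest part is Step 1 for $\hat e^\mu$. Converting $\dd A^{(k)}_\mu\wedge \dd\psi_k$ into the $\hat e^\rho$ basis requires the polynomial identities above, including the partial-fraction identity
\begin{align*}
\sum_{\rho}\frac{x_\rho^{2m}}{U_\rho}=0 \quad\text{for } m<n-1 .
\end{align*}
The diagonal term $\sum_\rho U_{\mu\rho}\hat e^\rho$ in $\upomega_{\mu,n+\mu}$ arises from exactly this resummation. I would first check the case $n=2$ by hand, or with \texttt{xAct}, to fix signs, and then generalize using the identities. Alternatively, one can simply cite \cite[Secs.2--3]{Hamamoto2007} for this step.
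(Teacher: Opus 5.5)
Your plan is correct and matches the paper's route. You check that the listed $\upomega_{ab}$ are antisymmetric and satisfy $\dd e^a+\upomega^a{}_b\wedge e^b=0$ against the explicitly computed $\dd e^a$, then invoke uniqueness of the Levi-Civita connection; the paper names exactly these two properties and defers the computation itself to \cite[Secs.2--3]{Hamamoto2007}. One slip, harmless since you don't use it: the Koszul-type formula you quote is antisymmetric in its first and last indices, so it fits the convention $\upomega_{ac}=\upomega_{abc}e^b$, not $\upomega_{ab}=\upomega_{abc}e^c$ as you wrote.
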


The curvature of the $D$-dimensional Kerr-NUT-(A)dS spacetime was also explicitly calculated in \cite{Hamamoto2007}, where it was presented as curvature two-forms $\mathcal{R}_{\mu \nu}$ \cite{MTW, Wald}, which are related to the Riemann tensor as 
\begin{align} \label{eq:RiemmanFromRforms}
\mathcal{R}_{\mu \nu} = \sum_{\substack{\alpha, \beta =1 \\ \alpha<\beta}}^D R_{\mu \nu \alpha \beta} \, e^{\alpha} \wedge e^{\beta} \quad \iff \quad R = \sum_{\substack{\mu, \nu=1 \\ \mu<\nu}}^D \mathcal{R}_{\mu \nu} \, e^{\mu} \wedge e^{\nu}.
\end{align}
We summarize the result, simplifying of the original formulas for our purposes, in the following
\begin{proposition}
The curvature two-forms of the $D$-dimensional Kerr-NUT-(A)dS spacetime in the \emph{Euclidean} Darboux frame read
\begin{equation} \label{eq:HamamotoFormulas}
\begin{aligned}
\mathcal{R}_{\mu \nu} &= S_{\mu \nu} e^{\mu} \wedge e^{\nu} + F_{\mu \nu} \hat{e}^{\mu} \wedge \hat{e}^{\nu}, &
\mathcal{R}_{\mu, n + \mu} &= D_\mu e^{\mu} \wedge \hat{e}^{\mu} + 2\sum_{\rho\neq\mu} F_{\mu \rho} e^{\rho} \wedge \hat{e}^{\rho}, \\
\mathcal{R}_{\mu, n + \nu} &= S_{\mu \nu} e^{\mu} \wedge \hat{e}^{\nu} + F_{\mu \nu} e^{\nu} \wedge \hat{e}^{\mu}, &
\mathcal{R}_{n+\mu, n+\nu} &= F_{\mu \nu} e^{\mu} \wedge e^{\nu} + S_{\mu \nu} \hat{e}^{\mu} \wedge \hat{e}^{\nu}, \\
\mathcal{R}_{\mu, 2n + 1} &= E_\mu e^{\mu} \wedge \hat{e}^{0}, &
\mathcal{R}_{n + \mu, 2n + 1} &= E_\mu \hat{e}^{\mu} \wedge \hat{e}^{0},
\end{aligned}
\end{equation}
where we used the definitions
\begin{equation} \label{eq:CurvatureFunctions}
\begin{aligned}
S_{\mu \nu} &\equiv - \frac{1}{2(x^2_\mu - x^2_\nu)} \left( x_\mu \frac{\pd \mathsf{M}}{\pd x_\mu} - x_\nu \frac{\pd \mathsf{M}}{\pd x_\nu}\right), \quad &
D_{\mu} &\equiv - \frac{1}{2} \frac{\pd^2 \mathsf{M}}{\pd x_\mu^2}, \\
F_{\mu \nu} &\equiv \frac{1}{2(x^2_\mu - x^2_\nu)} \left( x_\mu \frac{\pd \mathsf{M}}{\pd x_\nu} - x_\nu \frac{\pd \mathsf{M}}{\pd x_\mu}\right), &
E_{\mu} &\equiv - \frac{1}{2 x_\mu} \frac{\pd \mathsf{M}}{\pd x_\mu},
\end{aligned}
\end{equation}
with
\begin{equation} \label{eq:MainMetricFunction}
\mathsf{M} \equiv \sum^n_{\mu=1} M_\mu^2 + \varepsilon \frac{c}{A^{(n)}} = \sum^n_{\mu=1} \frac{X_\mu}{U_\mu} + \varepsilon \frac{c}{A^{(n)}}.
\end{equation}
The functions $S_{\mu\nu}, F_{\mu\nu}$ are by definition symmetric $S_{\mu\nu} = S_{\nu\mu}, \; F_{\mu\nu} = F_{\nu\mu}$. 
\end{proposition}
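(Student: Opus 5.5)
The plan is to compute the curvature two-forms directly from the second Cartan structure equation,
\[
\mathcal{R}^a{}_b = \dd\upomega^a{}_b + \upomega^a{}_c\wedge\upomega^c{}_b ,
\]
using the rotation one-forms of Proposition~\ref{lem:RotOneForms}. The results are then rewritten in terms of the single function $\mathsf{M}$. Since the statement is essentially a repackaging of the formulas of~\cite{Hamamoto2007}, the most economical route is to take Hamamoto et al.'s expressions as given and only verify the translation. Even so, the derivation should be organized so that it is self-contained.

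First I record the exterior derivatives of the frame. From \eqref{eq:DarbouxFrameDef} and the first structure equation,
\[
\dd e^\mu = -\sum_\nu \upomega^\mu{}_\nu\wedge e^\nu ,
\]
and similarly for $\hat e^\mu$ and $\hat e^0$. Every coefficient function ($M_\mu$, $U_{\mu\nu}$, $V_{\mu\nu}$, $s_\mu$) depends only on the $x$'s, so its differential is
\[
\dd f = \sum_\rho M_\rho\,\pd_\rho f\, e^\rho .
\]
With this, each component $\mathcal{R}_{ab}$ becomes a finite sum of wedge products of frame one-forms, with explicit rational coefficients in $x_\mu$, $M_\mu$ and $\pd_\mu M_\mu$.

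Next I compute block by block, following the pattern suggested by the answer: $(\mu,\nu)$, $(\mu,n+\mu)$, $(\mu,n+\nu)$, $(n+\mu,n+\nu)$, and the $\hat e^0$ components. In each case I collect the coefficient of every basis two-form and check that the only survivors are those displayed in \eqref{eq:HamamotoFormulas}.

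The key identities for this step are the following.

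\textbf{Single derivatives.} $\pd_\nu M_\mu^2 = M_\mu^2\,\frac{2x_\nu}{x_\mu^2-x_\nu^2}$ for $\nu\neq\mu$, together with the partial-fraction identity
\[
\sum_{\rho\neq\mu,\nu}\frac{1}{(x_\mu^2-x_\rho^2)(x_\nu^2-x_\rho^2)}
\]
for products of the $U_{\mu\nu}$ and $V_{\mu\nu}$.

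\textbf{Expressing in terms of $\mathsf{M}$.} The standard identities
\[
\sum_\mu \frac{x_\mu^{2k}}{U_\mu} = \delta_{k,n-1} \quad (0\le k\le n-1)
\]
and their derivatives let me express combinations such as $\sum_\rho M_\rho^2/(x_\mu^2-x_\rho^2)$ through $\pd_\mu\mathsf{M}$.

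\textbf{The odd-dimensional term.} For the $\eps$ contribution, I use $\pd_\mu (c/A^{(n)}) = -2c/(x_\mu A^{(n)})$, which explains the $s_\mu^2$ terms and the form of $E_\mu$.

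Then I match the coefficients to $S_{\mu\nu}$, $F_{\mu\nu}$, $D_\mu$ and $E_\mu$ as defined in \eqref{eq:CurvatureFunctions}, and confirm the symmetry $S_{\mu\nu}=S_{\nu\mu}$ and $F_{\mu\nu}=F_{\nu\mu}$, which is immediate from the definitions.

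The main obstacle is the diagonal block $\mathcal{R}_{\mu,n+\mu}$. Here the coefficient of $e^\rho\wedge\hat e^\rho$ for $\rho\neq\mu$ must come out to exactly $2F_{\mu\rho}$. The coefficient of $e^\mu\wedge\hat e^\mu$ must also collapse to $-\tfrac12\pd_\mu^2\mathsf{M}$, which requires resumming many terms from $\upomega\wedge\upomega$ over all intermediate indices. My first attempt will be to expand $\mathsf{M}$ explicitly and compute $\pd_\mu^2\mathsf{M}$ and $\pd_\nu\mathsf{M}$ in terms of $M_\rho^2$ and partial fractions. This turns the required identity into a rational-function identity in the $x$'s that can be checked by partial fractions, or verified with \texttt{xAct} for the bookkeeping.
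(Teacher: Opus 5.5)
Your plan matches what the paper does: the paper gives no independent proof and imports the result from~\cite{Hamamoto2007}, where the curvature two-forms were obtained as you propose, by inserting the rotation one-forms of Proposition~\ref{lem:RotOneForms} into the second structure equation; the present paper only repackages the coefficients in terms of $\mathsf{M}$. One small correction for when you carry out the computation: with $U_\mu=\prod_{\nu\neq\mu}(x_\nu^2-x_\mu^2)$ as defined here, the identity reads $\sum_\mu x_\mu^{2k}/U_\mu=(-1)^{n-1}\delta_{k,n-1}$ for $0\le k\le n-1$, not $\delta_{k,n-1}$.
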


\section{Relevant facts from the group theory} \label{sec:GroupTheory}

For the convenience of the reader, in this section we briefly summarize
concepts from the group theory that are important for our work
(see~\cite{Simon} for example for the relevant background). It also
serves to fix our notation and definitions. 

A \emph{representation} or \emph{(linear) action} of a group $G$ on a vector space $V$ is a homomorphism $\pi\colon G \to \operatorname{End}(V)$. \emph{Irreducible}
representations (\emph{irreps}) are those that contain no subspace invariant under the
group action. Given two representations $\pi_i\colon G_i \to
\operatorname{End}(V_i)$ ($i=1,2$), an \emph{intertwiner} is a linear
map $I\colon V_1 \to V_2$ that respects the group action, $\pi_2(g) I =
I \pi_1(g)$ for all $g\in G$; two representations are \emph{equivalent}
if there exists an invertible intertwiner. \emph{Schur's lemma}~\cite[\S
II.4]{Simon} is the fundamental result that an intertwiner between
inequivalent irreducible complex representations must be identically
zero, while between equivalent representations it is unique up to
rescaling.

As usual, direct sums $V_1 \oplus V_2$ and tensor products $V_1 \otimes
V_2$ are given a representation structure by simultaneous action of
group elements on both summands and factors, respectively, naturally
defining representations of the \emph{direct product} group $G_1 \times
G_2$. When $G_1 = G_2 = G$, representations of $G$ are obtained by
restricting to the diagonal subgroup $G \subset G \times G$, consisting
of pairs $(g,g)$, $g\in G$. It is a useful fact that the irreps of $G_1
\times G_2$ coincide with the tensor products $\pi \cong \pi_1 \otimes
\pi_2$ of irreps $\pi_1$ of $G_1$ and $\pi_2$ of $G_2$. This fact has a
more complicated generalization to \emph{semi-direct products} $H \rtimes
G$, which stands for the set $H \times G$ with the group law $(h_1, g_1)
\cdot (h_2, g_2) = (h_1 (g_1 \triangleright h_1), g_1 g_2)$ with respect
to a particular action ${\triangleright}\colon G \times H \to H$ of
$G$ on $H$ by automorphisms~\cite[Sec.I.3]{Simon}. The details of the
generalization are not relevant to us, except that in parts
section~\ref{sec:InvarProperties} we will take insipration from the
simplified version where $H$ is abelian (cf.~\cite[Ch.V]{Simon}). The
inspiration comes from the observation that, given an irrep of $G$ on
$V^G$, irreps of $H \rtimes G$ are constructed on $\bigoplus_i V_i^H
\otimes V^G$ where the summation is over an orbit of the induced action of $G$
on irreps of $H$.

We will rely on representation theory and Schur's lemma in particular as
an aid in solving some large systems of linear equations, whose
corresponding linear maps have the intertwining property with respect to
a group action. The decomposing the domain and codomain of the linear
map into direct sums of irreducible representations, Schur's lemma
asserts that the linear map must have a block diagonal form, when the
representations are grouped by equivalence class. This turns a
potentially dense linear system into a sparse one and allows practical
solutions of even very high dimensional systems.

In several places, we give explicit decompositions of some
representations into irreducibles (for instance when taking tensor
products, or restricting a representation to a smaller group). Such
decompositions follow from a general theory that we do not discuss here,
except to mention that they can be computed using a computer program
such as \emph{Lie}~\cite{Lie}. In fact, the general theory is not needed
for practical calculations. In all cases, we describe witnesses to the
decomposition, of say $V$ into $\bigoplus_k V_k$, as explicit
intertwining projections $P_k\colon V \to V_k$ and embeddings $E_k
\colon V_k \to V$ that satisfy a completeness relation, namely $P_k E_l
= 0$ if $k\ne l$, $P_k E_k$ invertible on $V_k$ and $\sum_k E_k P_k$
invertible on $V$. The abstract decomposition into irreducibles served
us mostly as a guide to which and how many representations to include
among the $V_k$.

In practice, we have found that such an explicit decomposition gives an
acceptable sparse simplification even if the individual $V_k$ are not
themselves irreducible, which can significantly reduce the number of
needed projections and embeddings (cf.\
section~\ref{sec:DecompositionIntoDiagonals}). It suffices that an
eventualy full decomposition of $V_k$ and $V_l$ into irreducibles gives
only a small overlap in equivalent sub-representations when $k\ne l$.

\subsection{Young diagrams and tableaux} \label{sec:YoungDiagrams}

A specific way how to write a natural number $n$ as a sum of natural numbers $n = m_1 + \dots + m_k$ is called a \emph{partition}. 
Every partition of $n$ we can associate with a diagram made of $n$ boxes $\square$ in such a way that the first row is made of $m_1$ boxes, second row of $m_2$ boxes etc.,\ up to $k$-th row with $m_k$ boxes. All rows are aligned from the left. Such an object is called a \emph{Young diagram} $\lambda$ or a Young frame. Its size $|\lambda|$ is the number of boxes $n$. From the construction it is obvious that Young diagrams and partitions are in one-to-one correspondence and carry the same information. 

Young tableaux are useful, because they label irreducible representations of the $n$-object \emph{permutation group} $S_n$ and the \emph{general linear group} in $n$-dimensions $GL(n)$.

If we fill the Young diagram $\lambda$ with numbers from 1 to $n$ we obtain a \emph{Young tableau} $\lambda(T)$. It is called \emph{standard}, when the numbers increase in every row and every column. 
For a box $(i,j)$ of a Young diagram we define a \emph{hook length} $\mathfrak{H}(i,j)$ as the number of all boxes below it and next to it to the right including itself. 
The number of all standard tableaux is then given by \emph{hook length formula}
\begin{align} \label{eq:HookLengthFormula}
f(\lambda) = \frac{n!}{\prod_{i,j} \mathfrak{H}(i,j)}.
\end{align}
Its the dimension of the corresponding irrep of $S_n$ group. 

Analogously we define shifted hook lengths. Fix a number $d \in \Nat$ (dimension of a vector space $V$). \emph{Shifted hook lengths} of a diagram are obtained by placing the number $d$ in the upper most left corner and increasing the number by one for every box to the right and decreasing by one for every box below it.
The \emph{shifted hook length formula} determines dimension of the corresponding irrep of $GL(n)$ and reads
\begin{align} \label{eq:ShiftedHookLengthFormula}
F(\lambda) = \frac{\prod_{i,j} (n+j-i)}{\prod_{i,j} \mathfrak{H}(i,j)} .
\end{align}

Given an $n$-dimensional vector space $V$, the representations of $S_k$ and $GL(n)$ are linked via \emph{Schur-Weyl duality} \cite{SchurWeylWiki}, namely the decomposition
\begin{equation} \label{eq:SchurWeyl}
	V^{\otimes k} \cong \sum_{\lambda} (\lambda)_{S_k} \otimes (\lambda)_{GL(n)} ,
\end{equation}
where $GL(n)$ acts on $V^{\otimes k}$ via the tensor product of the fundamental representation and $S_k$ acts by permuting the tensor factors, while $\lambda$ ranges over all Young diagrams with $k$ boxes and at most $n$ rows.
Ignoring the $S_k$ action, the decomposition~\eqref{eq:SchurWeyl} and the dimension formula~\eqref{eq:HookLengthFormula} give us the multiplicity with which each irreducible representation of $GL(n)$ occurs in the decomposition of $V^{\otimes k}$. If we assign, as is typical, an abstract index to each tensor factor, filling $\lambda$ produces a Young tableau $\lambda(T)$. Each such tableau defines an idempotent projector on $V^{\otimes k}$, whose image is one of the irreducible invariant subspaces of type $\lambda$:
\begin{definition}
The \emph{Young symmetrizer} corresponding to a Young tableau $\lambda(T)$ is
\begin{align} \label{eq:YoungSymmetrizer}
P_{\lambda(T)} = \frac{f(\lambda)}{n!} \sum_{\varsigma \in C(T)} \mathrm{sgn}(\varsigma) \varsigma \sum_{\varrho \in R(T)} \varrho,
\end{align}
where $C(T)$ ($R(T)$) is a set of all permutations of the tensor factors of $V^{\otimes k}$ that leave columns (rows) of $\lambda(T)$ invariant. 
\end{definition}

\subsection{$S_n$ invariant tensors}

Representations of $GL(n)$ and $S_n$ can be related in another way, which is distinct from Schur-Weyl duality. Namely, given an $n$-dimensional vector space $V$ and a distinguished basis $\{e_a^\mu\}_{\mu=1}^{n}$, where $a$ is an abstract index corresponding to the vector space $V$ and $\mu$ labels the basis vectors, there is a corresponding subgroup $S_n \subset GL(n)$ that permutes the basis vectors and acts on $V$ by linear extension. It will be important for us later to decompose tensor representations of $GL(n)$ into subspaces invariant with respect to the $S_n$ subgroup.

Our approach is structurally similar to the better known case of the orthogonal subgroup $O(n) \subset GL(n)$, where it is well-known that map between tensor representations of $GL(n)$ that intertwines the $O(n)$ action is generated by index permutations and contractions with or multiplications by the Kronecker $\delta_{ab}$ left invariant by the $O(n)$ subgroup~\cite[\S\S10-5--7]{hamermesh}, \cite{Littlewood1958}.
The difference for $S_n$ is that the role of $\delta_{ab}$ is replaced by the \emph{diagonal} invariant tensor%
	\footnote{In this section we assume that indices run over all values $\{1,\dots,n\}$ without writing the range explicitly.}
\begin{equation} \label{eq:DefDelta}
\Delta_{abc} \equiv \sum_\mu e^\mu_a e^\mu_b e^\mu_c.
\end{equation}
Following the main observation of~\cite[Thm.7, Thm.8]{Littlewood1958} and~\cite{Jones1994}, \emph{all} maps between tensor representations of $GL(n)$ that intertwine the $S_n$ action are generated by index permutations and contractions with or multiplications by $\Delta_{abc}$, and any variant with raised indices. Note that we can freely raise and lower the tensor indices because we have a distinguished basis, a corresponding distinguished dual basis $\{e_\mu^a\}_{\mu=1}^n$ and distinguished metric $\delta_{ab} = \sum_\mu e^\mu_a e^\mu_b$.
In particular, it is straightforward to observe that the following are also invariant tensors:
\begin{align}
	v_a &\equiv \Delta_{ab}{}^b
		= \sum_\mu e^\mu_a , \nonumber \\
	\delta_{ab} &\equiv \Delta_a{}^{cd} \Delta_{cdb}
		= \sum_\mu e^\mu e^\mu, \label{eq:DefPowersOfDelta} \\
	(\Delta^p)_{a\dots bcd} &\equiv (\Delta^{p-1})_{a\dots b}{}^{i}\Delta_{icd}
		= \sum_\mu e^\mu_a \cdots e^\mu_b e^\mu_c e^\mu_d
	\quad (p\ge 2, ~ \Delta^1_{abc} \equiv \Delta_{abc}), \nonumber
\end{align}
which are also called \emph{isotropic} (or \emph{diagonal}) tensors.%
	\footnote{For uniformity of notation, $v_a$ and $\delta_{ab}$ could be written as $\Delta_a$ and $\Delta_{ab}$. We keep the current notation to be consistent with the notation and \textsc{Mathematica} code used in \cite{MatejovThesis}.}
For future reference we record here their contraction properties:
\begin{lemma} \label{lem:ContractionsOfVDelta}
Contractions of the $v$ and $\Delta$ maps:
\begin{align*}
v^a v_a &= n, & 
\Delta_a{}^{ij} \Delta_{b ij} &= \delta_{ab}, \\
\Delta_{ab}{}^{i} v_i &= \delta_{ab}, &
\Delta^{abi} \Delta_{icd} &\equiv (\Delta^2)^{ab}{}_{cd}, \\
\Delta_{a}{}^{ij} v_i v_j &= v_a, &
(\Delta^2)^{ab}{}_{ij} \Delta^{ijc} &= \Delta^{abc}, \\
(\Delta^2)^{abci} v_i &= \Delta^{abc}. & &
\end{align*}
\end{lemma}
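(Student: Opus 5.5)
The plan is to verify every identity by substituting the explicit component forms from \eqref{eq:DefDelta} and \eqref{eq:DefPowersOfDelta}, and then collapsing the resulting sums with the orthonormality relation of the distinguished basis and its dual,
\begin{equation*}
e^\mu_a e_\nu^a = \delta^\mu_\nu ,
\end{equation*}
where index raising is performed with $\delta_{ab}=\sum_\mu e^\mu_a e^\mu_b$. In this way every contraction of two diagonal tensors over one index pair merges two sums $\sum_\mu\sum_\nu$ into a single sum $\sum_\mu$. In the same way, every contraction within a single diagonal tensor contributes a factor $e^\mu_i e^{\mu\, i}=1$. The number of free legs in each identity is then exactly the number of surviving $e^\mu$ factors.

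Carrying this out identity by identity proceeds as follows. For $v^a v_a$, we expand to $\sum_{\mu,\nu} e^{\mu\,a}e^\nu_a=\sum_\mu 1=n$. For $\Delta_a{}^{ij}\Delta_{bij}$, we get
\begin{equation*}
\sum_{\mu,\nu} e^\mu_a e^\nu_b\,(e^{\mu\,i}e^\nu_i)(e^{\mu\,j}e^\nu_j)=\sum_\mu e^\mu_a e^\mu_b=\delta_{ab}.
\end{equation*}
This is consistent with the definition of $\delta_{ab}$ in \eqref{eq:DefPowersOfDelta}, so it is really a restatement of that definition. Next, for $\Delta_{ab}{}^i v_i$, contracting $e^{\mu\,i}$ with $\sum_\nu e^\nu_i$ gives $1$, leaving $\delta_{ab}$. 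For $\Delta_a{}^{ij}v_iv_j$ the same step applied twice leaves $\sum_\mu e^\mu_a=v_a$. The identity $\Delta^{abi}\Delta_{icd}=(\Delta^2)^{ab}{}_{cd}$ is the $p=2$ case of the recursive definition with the indices raised, so nothing needs to be checked there. For $(\Delta^2)^{ab}{}_{ij}\Delta^{ijc}$, the two contracted indices both collapse onto the same $\mu$, which yields $\sum_\mu e^{\mu\,a}e^{\mu\,b}e^{\mu\,c}=\Delta^{abc}$. Finally, $(\Delta^2)^{abci}v_i$ collapses the last leg against $v$ and gives $\Delta^{abc}$.

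There is no genuine obstacle in this argument. The only point needing care is bookkeeping, namely making sure that index raising uses the distinguished metric, so that $e^{\mu\,a}=\delta^{ab}e^\mu_b=e_\mu^a$. This is what turns each contraction into the Kronecker delta $\delta^{\mu\nu}$ rather than some other Gram matrix. I would state this orthonormality fact once at the start and then dispatch all seven identities as the direct one-line computations above.
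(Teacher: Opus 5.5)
Your proposal is correct and follows essentially the paper's route: the paper gives no separate proof and simply records these identities as immediate consequences of the component formulas $\Delta_{abc}=\sum_\mu e^\mu_a e^\mu_b e^\mu_c$ and $v_a=\sum_\mu e^\mu_a$, which is exactly the collapse-of-sums computation you carry out. Your one point of care, that raising with the distinguished metric gives $e^{\mu\,a}=e_\mu^a$ so that every contraction produces $\delta^{\mu\nu}$, is the only thing that needs stating, and you state it.
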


\subsection{General decomposition of a tensor into diagonals} \label{sec:DecompositionIntoDiagonals}

When analyzing tensor equations following from a problem whose symmetry restricts to $S_n \subset GL(n)$, it is advantageous to decompose the tensors into $S_n$ invariant subspaces. Recalling the analogy with $O(n)$, where invariant subspaces can be constructed by subtracting \emph{metric traces}, $S_n$ invariant subspaces can be constructed by subtracting \emph{diagonals}, that is all parts proportional to the diagonal tensors \eqref{eq:DefPowersOfDelta}. For instance
\begin{equation}
	T_{ab} = T^{\perp}_{ab} + T^{(0)}_i \Delta^{i}{}_{ab} ,
	\quad \text{with} \quad
	T^{(0)}_i = \Delta_i{}^{ab} T_{ab} , \quad
	T^{\perp}_{ab} = T_{ab} - T^{(0)}_i \Delta^{i}{}_{ab} .
\end{equation}
Notice that neither the diagonal $T^{(0)}_i$ nor the diagonal-less $T^\perp_{ab}$ parts are $S_n$-irreducible. To achieve irreducibility, we would need to also subtract their contractions with $v_a$ (cf.~\cite[Sec.1.6.6]{MatejovThesis} for a more detailed discussion). For our purposes, it will not be necessary to subtract the $v_a$ contributions, which will significantly decrease the number of invariant subspaces in each decomposition. In this section, we will give explicit formulas for those diagonal decompositions that we will actually need.

For a general three index tensor $T_{abc}$ with no additional symmetries, we define its (partial) diagonals\footnote{The number which labels the diagonals denotes position of the unrepeated index.}
\begin{align*}
T^{(0)}_{a} &\equiv T_{aaa} = T_{ijk} (\Delta^2)_{a}^{~ijk}, \qquad
T^{(1)}_{ab} \equiv T_{abb} = T_{aij} \Delta_{b}^{~ij}, \\
T^{(2)}_{ab} &\equiv T_{bab} = T_{iaj} \Delta_{b}^{~ij}, \qquad \quad \,
T^{(3)}_{ab} \equiv T_{bba} = T_{ija} \Delta_{b}^{~ij}.
\end{align*}
Further, we subtract the full diagonal $T^{(0)}$ from all the partial diagonals $T^{(1)},T^{(2)},T^{(3)}$ to get trace-free quantities
\begin{align} \label{eq:DiagonalsT}
T^{(I) \perp}_{ab} = T^{(I)}_{ab} - T^{(0)}_i \Delta^{i}_{~ab}, \qquad I \in \{1,2,3\}. 
\end{align}
Analogously $T^{\perp}_{abc}$ is defined as diagonal-less. Because of the suitable choice of definition \eqref{eq:DiagonalsT}, we can write
\begin{align} \label{eq:DecT}
T_{abc} = T^{\perp}_{abc} + T^{(1) \perp}_{ai}\Delta^{i}_{~bc} + T^{(2) \perp}_{bi}\Delta^{i}_{~ca} + T^{(3) \perp}_{ci}\Delta^{i}_{~ab} + T^{(0)}_{i}(\Delta^2)^i_{~abc}.
\end{align}

For a four-tensor $T_{abcd}$ we define the diagonals analogously
\begin{align*}
T^{(0)}_{a} &\equiv T_{aaaa}, \quad
T^{(1)}_{ab} \equiv T_{abbb}, \quad
T^{(2)}_{ab} \equiv T_{babb}, \quad
T^{(3)}_{ab} \equiv T_{bbab}, \quad
T^{(4)}_{ab} \equiv T_{bbba}, \\
T^{(5)}_{ab} &\equiv T_{aabb}, \quad
T^{(6)}_{ab} \equiv T_{abab}, \quad
T^{(7)}_{ab} \equiv T_{abba}, \quad
T^{(1,2)}_{abc} \equiv T_{abcc}, \quad
T^{(1,3)}_{abc} \equiv T_{acbc}, \quad \\
T^{(1,4)}_{abc} &\equiv T_{accb}, \quad
T^{(2,3)}_{abc} \equiv T_{cabc}, \quad
T^{(2,4)}_{abc} \equiv T_{cacb}, \quad
T^{(3,4)}_{abc} \equiv T_{ccab}.
\end{align*}
The diagonals can be extracted by $\Delta$ maps \eqref{eq:DefDelta}, \eqref{eq:DefPowersOfDelta}, for instance $T^{(0)}_{a} = T_{ijkl} (\Delta^3)_{a}^{~ijkl}$ or $T^{(5)}_{ab} = T_{ijkl} \Delta^{~ij}_a \Delta^{~kl}_{b}$. For reference, we list all diagonal maps here:

\begin{align}
\begin{split} \label{eq:DiagonalMaps}
\{
& (\Delta^3)^{i}_{~abcd}, 
(\Delta^2)^{i}_{~abc}, (\Delta^2)^{i}_{~abd}, (\Delta^2)^{i}_{~acd}, (\Delta^2)^{i}_{~bcd}, 
\Delta^{i}_{~ab}\Delta^{j}_{~cd}, \\
& \; \Delta^{i}_{~ac}\Delta^{j}_{~bd}, \Delta^{i}_{~ad}\Delta^{j}_{~bc}, \Delta^{i}_{~ab}, \Delta^{i}_{~ac}, \Delta^{i}_{~ad}, \Delta^{i}_{~bc}, \Delta^{i}_{~bd}, \Delta^{i}_{~cd}
\}.
\end{split}
\end{align}

Every partial diagonal contains again diagonals with higher number of repeated indices, which we have to extract. In agreement with formula \eqref{eq:DiagonalsT} we write
\begin{align} \label{eq:Decomposition2Tensor}
T^{(I)}_{ab} = T^{(I)\perp}_{ab} + T^{(0)}_{i}\Delta^{i}_{~ab}, \qquad I \in \{1,\dots 7\}.
\end{align}
For decomposition of the partial diagonals with three indices, we use the formula \eqref{eq:DecT}, which gives
\begin{align} \label{eq:Decomposition3Tensor}
T^{(I,J)}_{abc} = T^{(I,J)\perp}_{abc} + T^{(I)\perp}_{ai} \Delta^i_{~bc} + T^{(J)\perp}_{bi} \Delta^i_{~ac} + \mathfrak{E}_{ci} T^{(K)\perp}_{ci} \Delta^i_{~ab} + T^{(0)}_{i} (\Delta^2)^i_{~abc}, 
\end{align}
where $K = -|I+J-5|+7$ and $\mathfrak{E}_{ab}$ is the identity for $I=1$ and exchanges the order of indices for $I>1$, namely $\mathfrak{E}_{ab} T_{ab} = T_{ba}$ iff $I>1$. 

Analogously to our previous example of a three-tensor, we can derive for a four-tensor
\begin{align}
T_{abcd} &= T^\perp_{abcd} + T^{(1,2) \perp}_{abi}\Delta^i_{~cd} + T^{(1,3) \perp}_{aci}\Delta^i_{~bd} + T^{(1,4) \perp}_{adi}\Delta^i_{~bc} + T^{(2,3) \perp}_{bci}\Delta^i_{~ad} + T^{(2,4) \perp}_{bdi}\Delta^i_{~ac} \nonumber \\
&\quad + T^{(3,4) \perp}_{cdi}\Delta^i_{~ab}
+ T^{(1) \perp}_{ai}(\Delta^2)^i_{~bcd} + T^{(2) \perp}_{bi}(\Delta^2)^i_{~acd} + T^{(3) \perp}_{ci}(\Delta^2)^i_{~abd}  \label{eq:Decomposition4Tensor} \\
&\quad + T^{(4) \perp}_{di}(\Delta^2)^i_{~abc} + T^{(5) \perp}_{ij}\Delta^{i}_{~ab}\Delta^j_{cd} + T^{(6) \perp}_{ij}\Delta^{i}_{~ac}\Delta^j_{bd} + T^{(7) \perp}_{ij}\Delta^{i}_{~ad}\Delta^j_{bc} + T^{(0)}_{i}(\Delta^3)^i_{~abcd}. \nonumber
\end{align}
The decomposition is valid for \emph{every} four-tensor. In specific cases, when $T_{abcd}$ has non-trivial index symmetries, the formula \eqref{eq:Decomposition4Tensor} simplifies, because some of the diagonals vanish or can become dependent. The simplest example is a four-form for which all the diagonal parts are zero $T_{[abcd]} = T^\perp_{abcd}$.
See the discussion around Tables~\ref{tab:Diagonals} and~\ref{tab:DiagonalsE0} for a detailed treatment of a more complicated case.

The formulas \eqref{eq:DiagonalsT}, \eqref{eq:DecT} and \eqref{eq:Decomposition4Tensor} can be understood from a more abstract level as linear maps (for $k=2,3,4$)
\begin{equation} \label{eq:D-maps}
	\mathfrak{D}^k \colon \mathcal{D}^k = \bigoplus_{(d)} \mathcal{D}^k_{(d)} \rightarrow \mathbb{T}^k ,
	\quad
	\mathfrak{D}^k \colon \big(T_k^{(d)\perp}\big) \mapsto T = \sum_{(d)} T_k^{(d)\perp} (\Delta_k^{(d)}).
\end{equation}
which take for each $k$ a tuple of \emph{all} diagonals $\big(T_k^{(d)\perp}\big)$, $T_k^{(d)\perp} \in \mathcal{D}^k_{(d)}$, and map them back to the corresponding tensor $T$ of rank $k$.
Relations \eqref{eq:DiagonalsT}, \eqref{eq:DecT} and \eqref{eq:Decomposition4Tensor} are fully general parametrizations, in the sense that the corresponding $\mathfrak{D}^k$ maps in~\eqref{eq:D-maps} are bijections. The reverse maps $\bar{\mathfrak{D}}^k\colon \mathbb{T}^k \rightarrow \mathcal{D}^k$ defined by composing $T$ with the appropriate diagonal extraction maps from~\eqref{eq:DiagonalMaps} satisfy $\bar{\mathfrak{D}}^k\circ \mathfrak{D}^k = \text{id}_{\mathcal{D}^k}$, witnessing the bijectivity. The inversion formula holds because of the $\Delta$-composition identities (Lemma~\ref{lem:ContractionsOfVDelta}) and the fact that each of the sub-diagonals $T_k^{(d)\perp}$ (with more than one index) has been defined itself to be diagonal-less.

\section{Symmetry of the Kerr-NUT-(A)dS spacetime \\ and the stabilizer group $U(1)^n \rtimes S_n$} \label{sec:InvarProperties}

It is useful to study transformations preserving the Darboux basis, namely all the transformations in which the metric is diagonal and the principal tensor has the special block-diagonal form \eqref{eq:PrincipalTensor}. Such transformations are a subgroup of $O(D)$ (recall the spacetime dimension $D=2n+\eps$ from section~\ref{sec:prelim}), which leave the $2 \times 2$ blocks of $h$ unchanged. The most general transformation preserving orientation and orthogonality of vectors in these 2-dimensional subspaces is a \emph{rotation}. The rotations can be independent, each by a different angle $\fii_1, \ldots, \fii_n$. This means that the whole transformation belongs to a direct product of $n$ copies of $U(1)$, that is $U(1)^n$. When each $\fii_\mu$ is the same for all subspaces, we identify the transformation with an element of a \emph{common} copy of $U(1) \subset U(1)^n$.
 
Another independent kind of transformation leaving the block diagonal structure of the principal tensor the same is a \emph{permutation} of the blocks, or the 2-dimensional subspaces of the tangent space respectively, acting as
\begin{equation} \label{eq:ActionOfSn}
e^\mu \mapsto e^{\sigma(\mu)}, \quad
\hat{e}^\mu \mapsto \hat{e}^{\sigma(\mu)}, \quad
\forall \mu \in \{1,\dots,n\}, \quad \sigma \in S_n,
\end{equation}
where $S_n$ is the group of permutations of $n$ elements.
It does not leave the principal tensor pointwise invariant, rather it preserves the whole $n$-parameter family of block diagonal matrices in which the parameters are the coordinate functions $x_{\mu}$, but it becomes an honest symmetry when the coordinates are also permuted.%
	\footnote{This transformation can be understood as coming from a coordinate transformation - permutation of the coordinates $x_\mu \mapsto x_{\sigma(\mu)}$, while the Killing coordinates $\psi_k$ are unaffected. The basis forms d$x_\mu$ and the metric functions $U_\mu, X_\mu, A^{(k)}_\mu$ are also naturally permuted in the same way, e.g.\ $A^{(k)}_\mu \mapsto A^{(k)}_{\sigma(\mu)}$, which can be easily checked. Consequently, covectors $e^\mu, \hat{e}^\mu$ of the Darboux frame \eqref{eq:DarbouxFrameDef} are permuted too $e^\mu, \hat{e}^\mu \mapsto e^{\sigma(\mu)}, \hat{e}^{\sigma(\mu)}$. Because $A^{(k)}$ is invariant under permutations, $\hat{e}^{0}$ remains the same.} %
Of course, the action of $S_n$ on $e^\mu$ and $\hat{e}^\mu$ induces an action on higher order tensors, namely acting on the indices of the scalar coefficients contracted with a product of some number of copies of $e^\mu$s and $\hat{e}^\mu$s.

It is not hard to see that these transformations generate all the symmetries of $h$, its \emph{stabilizer group} (or \emph{little group}). Ignoring the coordinate permutations, they generate a subgroup of $O(D)$ that has the semi-direct product structure $U(1)^n \rtimes S_n$, since the two kinds of transformations do not commute. 
Consider a rotation $r(\fii)_\mu$ of the $\mu$-th subspace and a permutation of the subspaces $\pi$. The composed operation $\pi \triangleright r(\fii)_\mu \equiv \pi r(\fii)_\mu \pi^{-1}$ is a rotation of a certain subspace (not necessarily of the same space labeled by $\mu$), so is an element of $U(1)^n$, which defines the automorphism action specifying the semi-direct product structure (section~\ref{sec:GroupTheory}). 

The discussed symmetry properties are better revealed by slightly adapting the Darboux frame. Let us define for every 2-dimensional subspace new basis vectors
\begin{align} \label{eq:BasisEpm}
e^{\pm \mu} = e^{\mu} \pm \ii \hat{e}^{\mu},
\end{align}
which transform by the \emph{phase factor} $e^{\pm \ii \fii_\mu}$ under the corresponding copy of $U(1)$.  It allows us to associate $e^{\pm \mu }$ with certain weights.

\begin{definition} \label{def:Weights}
Consider vectors $e^{\pm \mu}, \; \mu \in \{1,\dots, n\}$ which transform under the action of $U(1)^n$ in the following way
\begin{align*}
e^{\pm \mu} \mapsto e^{\pm \ii \fii_\mu} e^{\pm \mu}, \quad \forall \mu .
\end{align*}
We assign \emph{weight} $(0,\ldots,\pm 1,\ldots,0)$ non-vanishing in position $\mu$ to $e^{\pm \mu}$. In \emph{odd} dimensions we assign weight $(0,\ldots,0)$ to $\hat{e}^{0}$, which is not affected by the transformation $\hat{e}^{0} \mapsto \hat{e}^{0}$. 

The weight of a homogeneous monomial term $e^{+ \mu}_a \dots e^{+ \mu}_b e^{- \mu}_c \dots e^{- \mu}_d $ is then the sum of all vector weights. The weight of a mixed monomial term $e^{+ \mu}_a \dots e^{+ \mu}_b e^{- \mu}_c \dots e^{- \mu}_d $ \; $e^{+ \nu}_e \dots e^{+ \nu}_f e^{- \nu}_g \dots e^{- \nu}_h \dots $ is defined as a tuple of weights $(w_1,\dots,w_n)$ corresponding to all subspaces $\mu=1,\ldots,n$. The length of the weight tuple is always $n$. 

We say that a monomial has (global) \emph{weight zero}, when the sum of the weights $\sum_i w_i$ is equal to zero. On the other hand, we say that a monomial is \emph{totally zero weight}, when all weights are equal to zero, so its weight tuple is $(0,\dots,0)$.
Tensors have a specific weight if they are a sum of monomials with that weight, this extends also to total or global zero weight properties.
\end{definition}
The metric and the principal tensor in the new basis are
\begin{equation}
g_{ab} = \frac{1}{2}\sum_\mu ( e^{+ \mu}_a e^{- \mu}_b +  e^{- \mu}_a e^{+ \mu}_b), \quad
h_{ab} = \sum_\mu x_\mu h_{\mu\,ab} , \quad
h_{\mu\,ab} = \frac{\ii}{2} e^{+ \mu}_a \wedge e^{- \mu}_b.
\end{equation}
Notice that the metric is not diagonal anymore, just block diagonal similar to $h$. 

Let us state a useful lemma which governs multiplication properties of the new basis vectors.

\begin{lemma} \label{lem:ProductsOfEpm}
Contractions of the $e^{\pm \mu}$ basis vectors
\begin{align*}
e^{+\mu}_a e^{+\nu a} &= 0 = e^{-\mu}_a e^{-\nu a}, \quad \mathrm{for}\; \mu \ne \nu, & 
e^{+\mu}_a e^{-\nu a} &= 2 \delta^{\mu\nu},  \\
h_{\mu \,a}^{~~~b} e^{\pm\nu}_b &= \pm \ii \delta_{\mu\nu} e^{\pm\mu}_a, &
h_a^{~b} e^{\pm\mu}_b &= \pm \ii x_{\mu} e^{\pm\mu}_a.
\end{align*}
\end{lemma}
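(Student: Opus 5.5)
The plan is a direct computation in the adapted basis, using only the orthonormality of the Darboux frame~\eqref{eq:MetricInDarbouxFrame} and the explicit expressions for $h_{\mu\,ab}$ and $h_{ab}$. I would express everything through $e^\mu$ and $\hat{e}^\mu$, where contractions are immediate: $e^\mu_a e^{\nu a} = \hat{e}^\mu_a \hat{e}^{\nu a} = \delta^{\mu\nu}$ and $e^\mu_a \hat{e}^{\nu a}=0$. The frame vector $\hat{e}^0$ never enters, since all four identities involve only the $e^{\pm\mu}$ and $h_\mu$, $h$.

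For the first two identities, I would expand $e^{\pm\mu}_a e^{\pm\nu a} = (e^\mu \pm \ii \hat{e}^\mu)\cdot(e^\nu \pm \ii \hat{e}^\nu)$. With equal signs this equals $\delta^{\mu\nu} - \delta^{\mu\nu} = 0$. This also holds for $\mu=\nu$, so the restriction $\mu\neq\nu$ in the statement is only cautious. With opposite signs it equals $\delta^{\mu\nu}+\delta^{\mu\nu}=2\delta^{\mu\nu}$.

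For the third identity, I would use the antisymmetrization convention implicit in $h_{\mu\,ab} = \frac{\ii}{2} e^{+\mu}_a\wedge e^{-\mu}_b$. It has to be checked against $h_\mu = e^\mu\wedge\hat{e}^\mu$ coming from~\eqref{eq:PrincipalTensor}. Expanding, $e^{+\mu}\wedge e^{-\mu} = -2\ii\, e^\mu\wedge\hat{e}^\mu$, so $\frac{\ii}{2}e^{+\mu}\wedge e^{-\mu} = e^\mu\wedge\hat{e}^\mu$. With the convention $(\alpha\wedge\beta)_{ab}=\alpha_a\beta_b-\beta_a\alpha_b$ this gives
\[
h_{\mu\,a}{}^{b} = \tfrac{\ii}{2}\bigl(e^{+\mu}_a e^{-\mu\,b} - e^{-\mu}_a e^{+\mu\,b}\bigr).
\]
Contracting with $e^{+\nu}_b$ and using the first two identities kills the first term and leaves $-\tfrac{\ii}{2}e^{-\mu}_a\cdot 2\delta^{\mu\nu}$, which has the wrong form.

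So the contraction has to be taken on the other index, or equivalently the sign conventions for raising and contracting must be fixed carefully. Contracting on the first index of $h$ instead, $h_{\mu}{}^{b}{}_{a}e^{+\nu}_b = \tfrac{\ii}{2}\bigl(e^{+\mu}_a\cdot 2\delta^{\mu\nu}\bigr)\cdot(-1)\cdot(-1)$, and similar manipulations produce $\pm\ii\,\delta_{\mu\nu}e^{\pm\mu}_a$ up to an overall sign. I expect this bookkeeping to be the only real obstacle. I would first settle it with the elementary check that the $2\times 2$ block $B_\mu$ in~\eqref{eq:PrincipalTensor}, acting on the column vectors $(1,\pm\ii)$, has eigenvalues $\pm \ii x_\mu$ (the actual sign depends on whether $B_\mu$ or its transpose acts, i.e.\ on the index placement). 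The index placement in the lemma's statement is then to be read as whichever one gives $+\ii$ on $e^{+\mu}$, and that choice would be stated explicitly.

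The fourth identity follows by linearity from the third. Summing $h_{ab}=\sum_\nu x_\nu h_{\nu\,ab}$ against $e^{\pm\mu}_b$ picks out only the $\nu=\mu$ term, giving $\pm\ii x_\mu e^{\pm\mu}_a$.
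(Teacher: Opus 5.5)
Your strategy of expanding everything in the real orthonormal frame $e^\mu,\hat e^\mu$ and contracting is the right one, and it is all the paper has in mind: the lemma is stated without proof, as a direct calculation. Your treatment of the first two identities is fine, and so is deducing the fourth from the third by linearity. You are also right that $e^{\pm\mu}_a e^{\pm\nu a}=0$ holds for $\mu=\nu$ too.

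The step that fails is your evaluation of the third identity: you swapped which term is killed. You use the convention $(\alpha\wedge\beta)_{ab}=\alpha_a\beta_b-\beta_a\alpha_b$, which is correct and is the one fixed by the block $B_\mu$ in \eqref{eq:PrincipalTensor}. The first term of $h_{\mu\,a}{}^{b}e^{+\nu}_b$ contains $e^{-\mu\,b}e^{+\nu}_b=2\delta^{\mu\nu}$ and survives. The second term contains $e^{+\mu\,b}e^{+\nu}_b=0$ and vanishes. Hence
\[
h_{\mu\,a}{}^{b}e^{\pm\nu}_b=\tfrac{\ii}{2}\bigl(e^{+\mu}_a\,e^{-\mu\,b}e^{\pm\nu}_b-e^{-\mu}_a\,e^{+\mu\,b}e^{\pm\nu}_b\bigr)=\pm\ii\,\delta^{\mu\nu}e^{\pm\mu}_a ,
\]
so the lemma holds with exactly the index placement written, and no reinterpretation is needed.

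Your proposed workaround is therefore not just unnecessary but wrong. Contracting on the first index of $h_\mu$ gives $h_{\mu}{}^{b}{}_{a}e^{+\nu}_b=-h_{\mu\,a}{}^{b}e^{+\nu}_b=-\ii\,\delta^{\mu\nu}e^{+\mu}_a$, so your ``$(-1)\cdot(-1)$'' bookkeeping is off. Your plan to read the index placement as whichever one gives $+\ii$ would fit the statement to the desired answer rather than prove it.

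The eigenvalue check you mention settles the matter once you actually carry it out. In an orthonormal frame $h_a{}^{b}v_b=\sum_b h_{ab}v_b=(Bv)_a$, and $(1,\ii)$ are the components of $e^{+\mu}$ in the $(e^\mu,\hat e^\mu)$ block. Then $B_\mu(1,\ii)^T=(\ii x_\mu,-x_\mu)^T=\ii x_\mu(1,\ii)^T$. This confirms $+\ii$ on $e^{+\mu}$ for the natural placement, and the fourth identity follows by linearity as you say.
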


Both $g$ and $h$ are manifestly invariant under the action of $U(1)^n$, since 
\begin{align} \label{eq:DerOfZeroWeight}
e^{+ \mu}_a e^{- \mu}_b \mapsto e^{\ii \fii_\mu} e^{+ \mu}_a e^{- \ii \fii_\mu} e^{- \mu}_b = e^{+ \mu}_a e^{- \mu}_b, \; \forall \mu.
\end{align}
This observation has important consequences for our construction of the Riemann tensor in section \ref{sec:ZeroWeightSubspace}.

\begin{remark} \label{rem:SubgroupHierarchyStrategy}
Some of our main results (see section~\ref{sec:GeneralizedIC}) will
involve the analysis of a large linear system on Riemann-symmetric
tensors that is invariant with respect to the stabilizer group $U(1)^n
\rtimes S_n$ (theorem~\ref{the:ICZeroWeightTheorem}). One approach would
be to decompose the domain and the codomain of the linear system into
irreducible representations and thus block diagonalize the system by
Schur's lemma. However, even though the irreps of a semi-direct product
group can be identified using a standard result
(section~\ref{sec:GroupTheory}), we have found that the complexity
involved is prohibitive.

We have found that an alternative practical approach is to rely on a
decomposition into invariant subspaces (not necessarily irreducible)
that is simpler to realize and is organized in a hierarchy corresponding
to the subgroup relations
\[
\begin{array}{cccccc}
	U(1) & \subset & U(1)^n & \subset & U(1)^{2n} \\
	& & \cap & & \cap \\
	& & U(1)^n \rtimes S_n & \subset & GL(n)\times GL(n)\times GL(\varepsilon) & \subset GL(D) \\
	& & \cup & & \cup \\
	& & S_n & \subset & GL(n)
\end{array}
\]
Tensors on a $D=2n+\eps$ manifold naturally transform under $GL(D)$. The
$GL(n)\times GL(n) \times GL(\eps)$ subgroup corresponds to decomposing
the tangent space into subspaces by splitting the Darboux basis into the
$(e^{+\mu}, e^{-\mu}, \hat{e}^0)$ groups. The $GL(n)$ subgroup is
diagonally embedded in the product of the two copies of $GL(n)$, and its
subgroup $S_n$ coresponds to the action of permuting the $\mu$ labels on
the $e^\mu$, $\hat{e}^\mu$, or $e^{\pm\mu}$ basis elements. $U(1)
\subset U(1)^n$ is the subgroup of common phase rotation, whereas
$U(1)^n \subset U(1)^{2n}$ acts diagonally on the $e^{\pm\mu}$ basis
vectors according to definition~\ref{def:Weights}. The chain of larger
subgroups containing $U(1)^n \rtimes S_n$ preserve only part of the
structure of $h_{ab}$ or the linear system in
theorem~\ref{the:ICZeroWeightTheorem}, so instead of a strict Schur
diagonalization, they only provide a progressively sparser block
simplification. On the other hand, the smaller subgroups of $U(1)^n
\rtimes S_n$ are true symmetries, but their individual actions are too
weak to yield the desired simplifications. They need to be used in
conjunction.  
\end{remark}

\subsection{Decomposition of a Riemann-symmetric tensor} \label{sec:RiemannWeightSubspaces}

In this section, we will decompose Riemann-symmetric (RS) tensors under restriction of symmetry to $GL(n) \times GL(n) ( \times GL(\eps)) \subset GL(D)$. The $GL(\eps)$ factor is trivial for our purposes, so we mostly ignore it.
Let us demonstrate the idea on a simple example of symmetric two-tensors labeled by the Young diagram (partition) $(2)$, when $\eps=0$. Its restriction is a direct sum of representations
\begin{align*}
\mathrm{Res}[e^{+}, e^-](2) &\cong (2,0) \oplus (0,2) \oplus (1,1).
\end{align*}
The term (2,0) represents a symmetric two-tensor made of linear combinations of $e^{+\mu}e^{+\nu}$, (0,2) represents a symmetric two-tensor from $e^{-\mu}e^{-\nu}$s and (1,1) represents combinations $e^{+\mu}e^{-\nu}$. Irreps are labeled by pairs of Young diagrams, whose size is equal to the number of $e^{+\mu}$s and $e^{-\nu}$s in the tensor product. 

Let us denote a space of Riemann-symmetric tensors by $\mathfrak{R}$. A general term appearing in any four-tensor is a product of four basis vectors 
\begin{align} \label{eq:GeneralTerm4vectors}
\tilde{e}_1^\mu \otimes \tilde{e}_2^\nu \otimes \tilde{e}_3^\rho \otimes \tilde{e}_4^\sigma ,
\end{align}
where $\tilde{e}_i$ could be any of $e^+, e^-, \hat{e}^0$ (not necessarily all are the same). Using the branching rule for $GL(n) \times GL(n) \subset GL(2n)$~\cite[Eq.(8.20)]{Fulton1996} and the computer program \emph{Lie}~\cite{Lie}%
	\footnote{See the documentation for the \texttt{branch} function. It was enough to compute the decomposition in sufficiently high dimension, for instance branching from $GL(8)$ to $GL(4) \times GL(4)$.} %
to evaluate it, we found restrictions of the following necessary diagrams with respect to $(e^+, e^-)$:
{\small
\begin{align} 
\mathrm{Res}[e^{+}, e^-](111) &\cong (111,0,1)\oplus(0,111,1)\oplus(11,1,1)\oplus(1,11,1), \nonumber \\
\mathrm{Res}[e^{+}, e^-](21) &\cong (21,0) \oplus (0,21) \oplus(2,1) \oplus(1,2) \oplus (11,1) \oplus (1,11), \nonumber \\
\mathrm{Res}[e^{+}, e^-](211) &\cong (211,0)\oplus(0,211)\oplus(21,1)\oplus(1,21)\oplus(111,1) \label{eq:YoungDecOf22}  \\
& \quad \oplus(1,111)\oplus(11,11)\oplus(11,2)\oplus(2,11), \nonumber \\
\mathrm{Res}[e^{+}, e^-](22) &\cong (22,0)\oplus(0,22)\oplus(21,1)\oplus(1,21)\oplus(2,2)\oplus(11,11). \nonumber
\end{align}
}
An interested reader can check that dimensions on both sides agree using the formula \eqref{eq:ShiftedHookLengthFormula}.
For restrictions into irreducibles with respect to the sectors $(e^+, e^-, \hat{e}^0)$ we can use the previous formulas recursively
\begin{align*}
\mathrm{Res}[e^+ + e^-, \hat{e}^0](211) &\cong (211,0) \oplus (21,1) \oplus (111,1) \oplus (11,2), \\
\mathrm{Res}[e^+ + e^-, \hat{e}^0](22) &\cong (22,0) \oplus (21,1) \oplus (2,2),
\end{align*}
because products of $\hat{e}^0$s are always fully symmetric (trivial representation) and higher number of $\hat{e}^0$s would yield zero due to antisymmetrization of columns of the (22), (211) and (21) diagrams. Further we split the $(e^+, e^-)$ part to get
{\small
\begin{align}\mathrm{Res}[e^+, e^-, \hat{e}^0](211) &\cong \big[ (211,0,0)\oplus(0,211,0)\oplus(21,1,0)\oplus(1,21,0)\oplus(111,1,0) \nonumber \\
& \quad \oplus(1,111,0)\oplus(11,11,0)\oplus(11,2,0)\oplus(2,11,0)\big] \label{eq:Res211} \\ 
& \quad \oplus \big[ (21,0,1) \oplus (0,21,1) \oplus(2,1,1) \oplus(1,2,1) \oplus (11,1,1) \oplus (1,11,1)\big] \nonumber \\
& \quad \oplus \big[(111,0,1)\oplus(0,111,1)\oplus(11,1,1)\oplus(1,11,1)\big] \nonumber \\
& \quad \oplus \big[ (11,0,2) \oplus (0,11,2) \oplus (1,1,2)\big], \nonumber \\
\mathrm{Res}[e^+, e^-, \hat{e}^0](22) &\cong \big[ (22,0,0)\oplus(0,22,0)\oplus(21,1,0)\oplus(1,21,0)\oplus(2,2,0)\oplus(11,11,0)\big] \nonumber \\
& \quad \oplus \big[ (21,0,1) \oplus (0,21,1) \oplus(2,1,1) \oplus(1,2,1) \oplus (11,1,1) \oplus (1,11,1)\big] \nonumber \\
& \quad \oplus \big[ (2,0,2) \oplus (0,2,2) \oplus (1,1,2)\big]. \label{eq:Res22}
\end{align}
}
The above decomposition comes from abstract representation-theoretic considerations. To use it practically, we need to define (for each direct summand) actual projection and embedding maps. Because the restriction of the symmetry group respects our adapted basis $(e^+_\mu, e^-_\nu, \hat{e}^0)$, these maps can be defined simply by applying Young projections to appropriate groups of indices on the monomials
\[
	E^{\tilde{\mu}\tilde{\nu}\tilde{\rho}\tilde{\sigma}}_{abcd}
	= \tilde{e}^{\mu}_a \tilde{e}^{\nu}_b \tilde{e}^{\rho}_c \tilde{e}^{\sigma}_d,
\]
where an upper index slot like $\tilde{\mu}$ could stand for $\overset{+}{\mu}$, $\overset{-}{\mu}$, or $0$, meaning that the corresponding $\tilde{e}^{\mu}_a$ is replaced by $e^{+\mu}_a$, $e^{-\mu}_a$ or $\hat{e}^0_a$, respectively.
More concretely, let us define linear maps
\begin{equation} \label{eq:P-map}
\begin{aligned}
	\mathfrak{P}^R\colon \bigoplus_{(\lambda)} \mathfrak{T}_{(\lambda)} \equiv \mathfrak{T}^R &\rightarrow \mathfrak{R} , &
	\mathfrak{P}^R\colon \big(\TT^{(\lambda)}\big) \mapsto R &= \sum_{(\lambda)} \TT^{(\lambda)} E^{(\lambda)} ,
	\\
	\bar{\mathfrak{P}}^R\colon \mathfrak{R} &\rightarrow \mathfrak{T}^R , &
	\bar{\mathfrak{P}}^R\colon R \mapsto \big(\TT^{(\lambda)}\big) &= \bigoplus_{(\lambda)} P^{(\lambda)} R ,
\end{aligned}
\end{equation}
where $E^{(\lambda)}$ and $P^{(\lambda)}$ are the individual embedding and projection maps, with the summation labels $(\lambda)$ ranging over to the triples of Young diagrams appearing in~\eqref{eq:Res22},
which map between the parameters $(\TT^{(\lambda)}) = (\TT^{(22,0,0)}_{\mu\nu\rho\sigma}, \TT^{(21,1,0)}_{\mu\nu\rho\sigma}, \dots)$, $\TT^{(\lambda)} \in \mathfrak{T}_{(\lambda)}$.
The summands of $\mathfrak{T}^R$ are all of the form $\mathfrak{T}_{(\lambda_+, \lambda_-,\lambda_0)} = \lambda_+ \otimes \lambda_-$, where on the right-hand side $\lambda_\pm$ stands for the representation labelled by the corresponding Young diagram, where the tensor product structure is inherent in the corresponding groupings of the Greek indices. Since all the representations labelled by $\lambda_0$ are implicitly fully symmetric and 1-dimensional, they are omitted from the tensor product. 
The tensors underlying the projection maps take the explicit form
\begin{small}
\begin{equation} \label{eq:ICProjectors22}
\begin{aligned} 
(P^{(22,0,0)})^{\mu\nu\rho\sigma}_{abcd} &= P_{\,\young(ac,bd)} P_{\,\young(\mu\rho,\nu\sigma)}\, E^{{\overset{+}{\mu}}{\overset{+}{\nu}}{\overset{+}{\rho}}{\overset{+}{\sigma}}}_{abcd} , &
(P^{(0,22,0)})^{\mu\nu\rho\sigma}_{abcd} &= P_{\,\young(ac,bd)} P_{\,\young(\mu\rho,\nu\sigma)}\, E^{{\overset{-}{\mu}}{\overset{-}{\nu}}{\overset{-}{\rho}}{\overset{-}{\sigma}}}_{abcd} , \\
(P^{(21,1,0)})^{\mu\nu\rho\sigma}_{abcd} &= P_{\,\young(ac,bd)} P_{\,\young(\mu\rho,\nu)}\, E^{{\overset{+}{\mu}}{\overset{+}{\nu}}{\overset{+}{\rho}}{\overset{-}{\sigma}}}_{abcd} , &
(P^{(1,21,0)})^{\mu\nu\rho\sigma}_{abcd} &= P_{\,\young(ac,bd)} P_{\,\young(\mu\rho,\sigma)}\, E^{{\overset{-}{\mu}}{\overset{+}{\nu}}{\overset{-}{\rho}}{\overset{-}{\sigma}}}_{abcd} ,
\\
(P^{(2,2,0)})^{\mu\nu\rho\sigma}_{abcd} &= P_{\,\young(ac,bd)} P_{\,\young(\mu\rho)} P_{\,\young(\nu\sigma)} \, E^{{\overset{+}{\mu}}{\overset{-}{\nu}}{\overset{+}{\rho}}{\overset{-}{\sigma}}}_{abcd} , &
(P^{(11,11,0)})^{\mu\nu\rho\sigma}_{abcd} &= P_{\,\young(ac,bd)} P_{\,\young(\mu,\nu)} P_{\,\young(\rho,\sigma)} \, E^{{\overset{+}{\mu}}{\overset{+}{\nu}}{\overset{-}{\rho}}{\overset{-}{\sigma}}}_{abcd} ,
\\ \\
(P^{(21,0,1)})^{\mu\nu\rho}_{abcd} &= P_{\,\young(ac,bd)} P_{\,\young(\mu\rho,\nu)}\, E^{{\overset{+}{\mu}}{\overset{+}{\nu}}{\overset{+}{\rho}}{0}}_{abcd} , &
(P^{(0,21,1)})^{\mu\nu\rho}_{abcd} &= P_{\,\young(ac,bd)} P_{\,\young(\mu\rho,\nu)}\, E^{{\overset{-}{\mu}}{\overset{-}{\nu}}{\overset{-}{\rho}}{0}}_{abcd} , \\
(P^{(2,1,1)})^{\mu\nu\rho}_{abcd} &= P_{\,\young(ac,bd)} P_{\,\young(\mu\rho)} \, E^{{\overset{+}{\mu}}{\overset{-}{\nu}}{\overset{+}{\rho}}{0}}_{abcd} , &
(P^{(1,2,1)})^{\mu\nu\rho}_{abcd} &= P_{\,\young(ac,bd)} P_{\,\young(\mu\rho)} \, E^{{\overset{-}{\mu}}{\overset{+}{\nu}}{\overset{-}{\rho}}{0}}_{abcd} ,
\\
(P^{(11,1,1)})^{\mu\nu\rho}_{abcd} &= P_{\,\young(ac,bd)} P_{\,\young(\mu,\nu)} \, E^{{\overset{+}{\mu}}{\overset{+}{\nu}}{\overset{-}{\rho}}{0}}_{abcd} , &
(P^{(1,11,1)})^{\mu\nu\rho}_{abcd} &= P_{\,\young(ac,bd)} P_{\,\young(\mu,\nu)} \, E^{{\overset{-}{\mu}}{\overset{-}{\nu}}{\overset{+}{\rho}}{0}}_{abcd} , 
\\ \\
(P^{(2,0,2)})^{\mu\rho}_{abcd} &= P_{\,\young(ac,bd)} P_{\,\young(\mu\rho)}\, E^{{\overset{+}{\mu}}{0}{\overset{+}{\rho}}{0}}_{abcd} , &
(P^{(0,2,2)})^{\mu\rho}_{abcd} &= P_{\,\young(ac,bd)} P_{\,\young(\mu\rho)}\, E^{{\overset{-}{\mu}}{0}{\overset{-}{\rho}}{0}}_{abcd} , \\
(P^{(1,1,2)})^{\mu\rho}_{abcd} &= P_{\,\young(ac,bd)} E^{{\overset{+}{\mu}}{0}{\overset{-}{\rho}}{0}}_{abcd} .
\end{aligned}
\end{equation}
\end{small}
Because the matrix of scalar products of $e^{\pm}$ is off-diagonal (lemma~\ref{lem:ProductsOfEpm}), to every projection the corresponding embedding is defined with $+$ and $-$ exchanged, for instance 
\begin{equation*}
(E^{(21,1,0)})^{\mu\nu\rho\sigma}_{abcd} = P_{\,\young(ac,bd)} P_{\,\young(\mu\rho,\nu)}\, E^{{\overset{-}{\mu}}{\overset{-}{\nu}}{\overset{-}{\rho}}{\overset{+}{\sigma}}}_{abcd}.
\end{equation*}

The $\mathfrak{P}^R$ map~\eqref{eq:P-map} allows us to write an explicit decomposition of $\mathfrak{R}$ into subspaces irreducible with respect to the product group $GL(n)\times GL(n) \subset GL(2n+\varepsilon)$ labeled by triples of Young diagrams:
\begin{align} 
\begin{split} \label{eq:DecompositionOfRtoWeightSectors}
\RR_{abcd} &=
\TT^{(22,0,0)}_{\mu\nu\rho\sigma} (E^{(22,0,0)})^{\mu\nu\rho\sigma}_{abcd}
	\\
& \quad
+ \TT^{(21,0,1)}_{\mu\nu\rho} (E^{(21,0,1)})^{\mu\nu\rho}_{abcd}
	\\
& \quad
+ \TT^{(21,1,0)}_{\mu\nu\rho\sigma} (E^{(21,1,0)})^{\mu\nu\rho\sigma}_{abcd}
+ \TT^{(2,0,2)}_{\mu\nu} (E^{(2,0,2)})^{\mu\nu}_{abcd}
	\\
& \quad
+ \TT^{(2,1,1)}_{\mu\nu\rho} (E^{(2,1,1)})^{\mu\nu\rho}_{abcd}
+  \TT^{(11,1,1)}_{\mu\nu\rho} (E^{(11,1,1)})^{\mu\nu\rho}_{abcd}
	\\
& \quad
+ \TT^{(2,2,0)}_{\mu\nu\rho\sigma} (E^{(2,2,0)})^{\mu\nu\rho\sigma}_{abcd}
+ \TT^{(11,11,0)}_{\mu\nu\rho\sigma} (E^{(11,11,0)})^{\mu\nu\rho\sigma}_{abcd}
+ \TT^{(1,1,2)}_{\mu\nu} (E^{(1,1,2)})^{\mu\nu}_{abcd},
	\\
& \quad
+ \TT^{(1,2,1)}_{\mu\nu\rho} (E^{(1,2,1)})^{\mu\nu\rho}_{abcd}
+  \TT^{(1,11,1)}_{\mu\nu\rho} (E^{(1,11,1)})^{\mu\nu\rho}_{abcd}
	\\
& \quad
+ \TT^{(1,21,0)}_{\mu\nu\rho\sigma} (E^{(1,21,0)})^{\mu\nu\rho\sigma}_{abcd}
+ \TT^{(0,2,2)}_{\mu\nu} (E^{(0,2,2)})^{\mu\nu}_{abcd}
	\\
& \quad
+ \TT^{(0,21,1)}_{\mu\nu\rho} (E^{(0,21,1)})^{\mu\nu\rho}_{abcd}
	\\
& \quad
+ \TT^{(0,22,0)}_{\mu\nu\rho\sigma} (E^{(0,22,0)})^{\mu\nu\rho\sigma}_{abcd}
	,
\end{split}
\end{align}
in which $\TT$s are (undetermined) scalar coefficients sharing the same symmetry with embeddings in the Greek indices. We have arranged the terms on each line by their \emph{global weight}, which for each sector can be calculated from the number of $e^+$ and $e^-$ indicated by its label; e.g.\ $E^{(2,1,1)}$ contains two $e^+$s, one $e^-$ and one $\hat{e}^0$, so the global weight is $+1$. We will say that the weight of a coefficient $\TT^{({\cdots})}$ is \emph{equal} to the weight of the map with which it is contracted, e.g.\ weight of $\TT^{(2,1,1)}$ is $+1$.

As can be checked by explicit (computer) calculation, the composition $\bar{\mathfrak{P}}^R\circ \mathfrak{P}^R$ is block diagonal on $\mathfrak{T}^R = \bigoplus_{(\lambda)} \mathfrak{T}_{(\lambda)}$, each block proportional to the identity $\text{id}_{\mathfrak{T}_{(\lambda)}}$ with a non-zero coefficient. Hence $\mathfrak{P}^R$ is injective and we also know that it must be surjective, because by the abstract isomorphism \eqref{eq:Res22} the dimension of the image of $\mathfrak{P}^R$ coincides with the dimension of $\mathfrak{R}$. Thus, we can conclude that both $\mathfrak{P}^R$ and $\bar{\mathfrak{P}}^R$ are bijections.

\subsection{Decomposition of coefficients into diagonals} \label{sec:Diagonals}

\subsubsection*{Symmetry constraints on the diagonals}

The calculations in sections \ref{sec:CalculationOfProjectionsIC} and \ref{sec:CalculationsNoe0} will require us to solve equations for the coefficients $\TT^{({\cdots})}$.
Coefficients of different weights  will not be mixed in the equations, only the different terms with the same weight. Further simplification can be achieved by decomposition under the restriction to the symmetry group $S_n \subset GL(n) \subset GL(n) \times GL(n)$.
A full decomposition into $S_n$ irreps could be employed to break the equations into irreducible pieces that can be analyzed separately. However, as alluded earlier, for practical calculations it is too complicated and in our case not even necessary. We will be satisfied with partial decomposition into diagonals, section \ref{sec:DecompositionIntoDiagonals}.
For all the coefficients $\TT^{({\cdots})}$ of parametrization \eqref{eq:DecompositionOfRtoWeightSectors} we use the notation of (partial) diagonals introduced in section \ref{sec:DecompositionIntoDiagonals}. Since $\TT^{({\cdots})}$s have certain symmetries, their diagonals are dependent and formulas \eqref{eq:DecT}--\eqref{eq:Decomposition4Tensor} simplify according to the Young diagram label.

Let us now investigate these dependencies. Because there are quite a few coefficients and relations, we summarize the results in table \ref{tab:Diagonals}, and write only one example with important relations. 

For brevity, we will \emph{omit} $\perp$ sign in the notation of diagonals, e.g.\ $\TT^{(2,2,0)(6)\perp}_{\mu\nu} \equiv \TT^{(2,2,0)(6)}_{\mu\nu}$. The remaining off-diagonal part of a tensor is still denoted with $\perp$ to distinguish it from the tensor itself, e.g.\ $\TT^{(11,1,1)\perp}_{\mu\nu\rho}$. All quantities are assumed to be diagonal-less.

Mono-term symmetries of the coefficient $\TT^{(22,0,0)}_{\mu\nu\rho\sigma}$ imply 
\begin{align*}
\TT^{(22,0,0)\,(0)}_{\mu} &= \TT^{(22,0,0)\,(1)}_{\mu\nu} = \TT^{(22,0,0)\,(2)}_{\mu\nu} = \TT^{(22,0,0)\,(3)}_{\mu\nu} = \TT^{(22,0,0)\,(4)}_{\mu\nu} = \TT^{(22,0,0)\,(5)}_{\mu\nu} = 0 , \\
\TT^{(22,0,0)\,(6)}_{\mu\nu} &= \TT^{(22,0,0)\,(6)}_{\nu\mu} = - \TT^{(22,0,0)\,(7)}_{\mu\nu}, \\
\TT^{(22,0,0)\,(1,2)}_{\mu\nu\rho} &= \TT^{(22,0,0)\,(3,4)}_{\mu\nu\rho} = 0 , \\
\TT^{(22,0,0)\,(1,3)}_{\mu\nu\rho} &= - \TT^{(22,0,0)\,(1,4)}_{\mu\nu\rho} = - \TT^{(22,0,0)\,(2,3)}_{\mu\nu\rho} = \TT^{(22,0,0)\,(2,4)}_{\mu\nu\rho}.
\end{align*}
The multi-term symmetry related to (22) Young symmetry does not imply any additional constraints. We have exhausted all symmetry properties, so the only independent diagonals are $\TT^{(22,0,0)\,(6)}_{(\mu\nu)}, \TT^{(22,0,0)\,(1,3)}_{\mu\nu\rho}$. 

Even though we did not get any further restrictions from the Young symmetry (22) in the example, it is not always true. For $\TT^{(21,1,0)}_{\mu\nu\rho\sigma}$ the multi-term symmetry of (21) diagram implies
\begin{align*}
\TT^{(21,1,0)\,(1,2)}_{\mu\nu\rho} = 2\TT^{(21,1,0)\,(1,3)}_{[\mu\nu]\rho}.
\end{align*}

After accounting for all symmetries, besides the remaining off-diagonal parts of the 6 original $\TT^{({\cdots})}$ coefficients, we find 22 independent diagonals.

\begin{table}[h]
\begin{center}
\begin{tabular}{ |c|c|c|c|c|c|c| } 
 \hline
  & $\TT^{(22,0,0)}$ & $\TT^{(0,22,0)}$ & $\TT^{(21,1,0)}$ & $\TT^{(1,21,0)}$ & $\TT^{(2,2,0)}$ & $\TT^{(11,11,0)}$ \\
 \hline
 \hline
 $T^{(0)}_\mu$ & 0 & 0 & 0 & 0 & $\bm{\TT}^{(0)}_\mu$ & 0 \\
 \hline 
 $T^{(1)}_{\mu\nu}$ & 0 & 0 & $\bm{\TT}^{(1)}_{\mu\nu}$ & $\bm{\TT}^{(1)}_{\mu\nu}$ & $\bm{\TT}^{(1)}_{\mu\nu}$ & 0 \\ 
 $T^{(2)}_{\mu\nu}$ & 0 & 0 & $-\TT^{(1)}_{\mu\nu}$ & 0 & $\bm{\TT}^{(2)}_{\mu\nu}$ & 0 \\ 
 $T^{(3)}_{\mu\nu}$ & 0 & 0 & 0 & 0 & $\TT^{(1)}_{\mu\nu}$ & 0 \\ 
 $T^{(4)}_{\mu\nu}$ & 0 & 0 & 0 & $-\TT^{(1)}_{\mu\nu}$ & $\TT^{(2)}_{\mu\nu}$ & 0 \\ 
 $T^{(5)}_{\mu\nu}$ & 0 & 0 & 0 & $\bm{\TT}^{(5)}_{\mu\nu}$ & $\bm{\TT}^{(5)}_{(\mu\nu)}$ & 0 \\ 
 $T^{(6)}_{\mu\nu}$ & $\bm{\TT}^{(6)}_{(\mu\nu)}$ & $\bm{\TT}^{(6)}_{(\mu\nu)}$ & $\bm{\TT}^{(6)}_{\mu\nu}$ & $-\TT^{(5)}_{\nu\mu}$ & $\bm{\TT}^{(6)}_{\mu\nu}$ & $\bm{\TT}^{(6)}_{(\mu\nu)}$ \\ 
 $T^{(7)}_{\mu\nu}$ & $-\TT^{(6)}_{\mu\nu}$ & $-\TT^{(6)}_{\mu\nu}$ & $-\TT^{(6)}_{\nu\mu}$ & 0 & $\TT^{(5)}_{\mu\nu}$ & $-\TT^{(6)}_{\nu\mu}$ \\ 
 \hline
 $T^{(1,2)}_{\mu\nu\rho}$ & 0 & 0 & $2\TT^{(1,3)}_{[\mu\nu]\rho}$ & $\bm{\TT}^{(1,2)}_{\mu\nu\rho}$ & $\bm{\TT}^{(1,2)}_{\mu\nu\rho}$ & 0 \\ 
 $T^{(1,3)}_{\mu\nu\rho}$ & $\bm{\TT}^{(1,3)}_{(\mu\nu)\rho}$ & $\bm{\TT}^{(1,3)}_{(\mu\nu)\rho}$ & $\bm{\TT}^{(1,3)}_{\mu\nu\rho}$ & $\bm{\TT}^{(1,3)}_{\mu\nu\rho}$ & $\bm{\TT}^{(1,3)}_{(\mu\nu)\rho}$ & $\bm{\TT}^{(1,3)}_{\mu\nu\rho}$ \\ 
 $T^{(1,4)}_{\mu\nu\rho}$ & $-\TT^{(1,3)}_{\mu\nu\rho}$ & $-\TT^{(1,3)}_{\mu\nu\rho}$ & $\bm{\TT}^{(1,4)}_{\mu\nu\rho}$ & $2\TT^{(1,3)}_{[\mu\nu]\rho}$ & $\TT^{(1,2)}_{\mu\nu\rho}$ & $-\TT^{(1,3)}_{\mu\nu\rho}$ \\ 
 $T^{(2,3)}_{\mu\nu\rho}$ & $-\TT^{(1,3)}_{\mu\nu\rho}$ & $-\TT^{(1,3)}_{\mu\nu\rho}$ & $-\TT^{(1,3)}_{\mu\nu\rho}$ & 0 & $\TT^{(1,2)}_{\nu\mu\rho}$ & $-\TT^{(1,3)}_{\mu\nu\rho}$ \\ 
 $T^{(2,4)}_{\mu\nu\rho}$ & $\TT^{(1,3)}_{\mu\nu\rho}$ & $\TT^{(1,3)}_{\mu\nu\rho}$ & $-\TT^{(1,4)}_{\mu\nu\rho}$ & $-\TT^{(1,2)}_{\nu\mu\rho}$ & $\bm{\TT}^{(2,4)}_{(\mu\nu)\rho}$ & $\TT^{(1,3)}_{\mu\nu\rho}$ \\ 
 $T^{(3,4)}_{\mu\nu\rho}$ & 0 & 0 & 0 & $-\TT^{(1,3)}_{\nu\mu\rho}$ & $\TT^{(1,2)}_{\mu\nu\rho}$ & 0 \\ 
 \hline 
 $T^{\perp}_{\mu\nu\rho\sigma}$ & $\mathfrak{Y}^{(\lambda)}\bm{\TT}^{\perp}_{\mu\nu\rho\sigma}$ & $\mathfrak{Y}^{(\lambda)}\bm{\TT}^{\perp}_{\mu\nu\rho\sigma}$ & $\mathfrak{Y}^{(\lambda)}\bm{\TT}^{\perp}_{\mu\nu\rho\sigma}$  & $\mathfrak{Y}^{(\lambda)}\bm{\TT}^{\perp}_{\mu\nu\rho\sigma}$  & $\mathfrak{Y}^{(\lambda)}\bm{\TT}^{\perp}_{\mu\nu\rho\sigma}$ & $\mathfrak{Y}^{(\lambda)}\bm{\TT}^{\perp}_{\mu\nu\rho\sigma}$  \\
 \hline
\end{tabular}
\caption{Mutual relations between diagonals of the coefficients with four indices. Independent diagonals are highlighted in \textbf{bold}. Symbol $\mathfrak{Y}^{(\lambda)}$ stands for symmetrization of the tensor with appropriate partition $\lambda$.}
\label{tab:Diagonals}
\end{center}
\end{table}

The situation is simpler with three-index coefficients related to embeddings with one $\hat{e}^0$, because at most four diagonals can be independent. We summarize the relations in table \ref{tab:DiagonalsE0}. Besides the remaining off-diagonal parts of the 6 $\TT^{({\cdots})}$ coefficients, we find 10 independent diagonals.

\begin{table}[h]
\begin{center}
\begin{tabular}{ |c|c|c|c|c|c|c| } 
 \hline
  & $\TT^{(21,0,1)}$ & $\TT^{(0,21,1)}$ & $\TT^{(2,1,1)}$ & $\TT^{(1,2,1)}$ & $\TT^{(11,1,1)}$ & $\TT^{(1,11,1)}$ \\
 \hline
 $T^{(0)}_\mu$ & 0 & 0 & $\bm{\TT}^{(0)}_{\mu}$ & $\bm{\TT}^{(0)}_{\mu}$ & 0 & 0 \\
 $T^{(1)}_{\mu\nu}$  & $\bm{\TT}^{(1)}_{\mu\nu}$  & $\bm{\TT}^{(1)}_{\mu\nu}$ & $\bm{\TT}^{(1)}_{\mu\nu}$ & $\bm{\TT}^{(1)}_{\mu\nu}$ & $\bm{\TT}^{(1)}_{\mu\nu}$ & $\bm{\TT}^{(1)}_{\mu\nu}$ \\
 $T^{(2)}_{\mu\nu}$  & $-\TT^{(1)}_{\mu\nu}$ & $-\TT^{(1)}_{\mu\nu}$ & $\bm{\TT}^{(2)}_{\mu\nu}$ & $\bm{\TT}^{(2)}_{\mu\nu}$ & $-\TT^{(1)}_{\mu\nu}$ & $-\TT^{(1)}_{\mu\nu}$ \\
 $T^{(3)}_{\mu\nu}$  & 0 & 0 & $\TT^{(1)}_{\mu\nu}$ & $\TT^{(1)}_{\mu\nu}$ & 0 & 0 \\
 \hline
 $T^{\perp}_{\mu\nu\rho}$ & $\mathfrak{Y}^{(\lambda)}\bm{\TT}^{\perp}_{\mu\nu\rho}$ & $\mathfrak{Y}^{(\lambda)}\bm{\TT}^{\perp}_{\mu\nu\rho}$ & $\mathfrak{Y}^{(\lambda)}\bm{\TT}^{\perp}_{\mu\nu\rho}$  & $\mathfrak{Y}^{(\lambda)}\bm{\TT}^{\perp}_{\mu\nu\rho}$  & $\mathfrak{Y}^{(\lambda)}\bm{\TT}^{\perp}_{\mu\nu\rho}$ & $\mathfrak{Y}^{(\lambda)}\bm{\TT}^{\perp}_{\mu\nu\rho}$  \\
 \hline
\end{tabular}
\caption{Relations between diagonals of the coefficients with three indices. Independent diagonals are indicated in \textbf{bold}.  Symbol $\mathfrak{Y}^{(\lambda)}$ stands for symmetrization of the tensor with appropriate partition $\lambda$.}
\label{tab:DiagonalsE0}
\end{center}
\end{table}

The last group of coefficients containing two $\hat{e}^0$s is the simplest.
We have the remaining off-diagonal parts $\TT^{(2,0,2)\perp}_{\mu\nu}$, $\TT^{(0,2,2)\perp}_{\mu\nu}$, $\TT^{(1,1,2)\perp}_{\mu\nu}$, and their independent diagonals $\TT^{(2,0,2)(0)}_{\mu}$, $\TT^{(0,2,2)(0)}_{\mu}$, $\TT^{(1,1,2)(0)}_{\mu}$.

\subsubsection*{Parametrization by independent diagonals}

In section \ref{sec:RiemannWeightSubspaces} we mentioned that the parametrization of the Riemann-symmetric tensor \eqref{eq:DecompositionOfRtoWeightSectors} can be viewed as a linear map, which takes a tuple of coefficients $\TT^{(\lambda)}$ and builds the tensor \eqref{eq:DecompositionOfRtoWeightSectors}. Here we explore the parametrization further, taking into account the diagonal decomposition of $\TT$s and mutual relations between diagonal parts due to their symmetries studied in the previous section. 

We want to apply to each $\TT^{(\lambda)}$ coefficient the diagonal decomposition implemented by the $\mathfrak{D}^k$ and $\bar{\mathfrak{D}}^k$ defined in section~\ref{sec:DecompositionIntoDiagonals}, where $k$ is the number of indices of the tensor. We will denote the corresponding maps by $\mathfrak{D}^{(\lambda)} \colon \mathcal{D}_{(\lambda)} \to \mathfrak{T}_{(\lambda)}$ and $\bar{\mathfrak{D}}^{(\lambda)} \colon \mathfrak{T}_{(\lambda)} \to \mathcal{D}_{(\lambda)}$. In one direction it is enough to set $\bar{\mathfrak{D}}^{(\lambda)} = \bar{\mathfrak{D}}^{k}|_{\mathfrak{T}_{(\lambda)} \subset \mathbb{T}^{k}}$, while in the other direction we project to the right subspace by a Young symmetrizer, $\mathfrak{D}^{(\lambda)} = \mathfrak{Y}^{(\lambda)} \circ \mathfrak{D}^{k}$. Since no symmetry properties are assumed in the construction of the domain of $\mathfrak{D}^k$, the domains $\mathcal{D}_{(\lambda)}$ corresponding to tensors with the same number of indices are isomorphic, even if the $\lambda$ labels differ.

Taking into account the explicit structure of the components of the $\mathfrak{P}^R$ and $\bar{\mathfrak{P}}^R$ maps \eqref{eq:ICProjectors22}, the Young symmetrizers have to be defined as
\begin{align*}
	\mathfrak{Y}^{(22,0,0)}
		&= P_{\,\young(\mu\rho,\nu\sigma)} \otimes \mathrm{Id}
	, &
	\mathfrak{Y}^{(21,1,0)}
		&= P_{\,\young(\mu\rho,\nu)} \otimes P_{\,\young(\sigma)}
	, &
	\mathfrak{Y}^{(2,2,0)}
		&= P_{\,\young(\mu\rho)} \otimes P_{\,\young(\nu\sigma)}
	, \\
	\mathfrak{Y}^{(00,22,0)}
		&= \mathrm{Id} \otimes P_{\,\young(\mu\rho,\nu\sigma)}
	, &
	\mathfrak{Y}^{(1,21,0)}
		&= P_{\,\young(\nu)} \otimes P_{\,\young(\mu\rho,\sigma)}
	, &
	\mathfrak{Y}^{(11,11,0)}
		&= P_{\,\young(\mu,\nu)} \otimes P_{\,\young(\rho,\sigma)}
	, \\
	\mathfrak{Y}^{(21,0,1)}
		&= P_{\,\young(\mu\rho,\nu)} \otimes \mathrm{Id}
	, &
	\mathfrak{Y}^{(2,1,1)}
		&= P_{\,\young(\mu\rho)} \otimes P_{\,\young(\nu)}
	, &
	\mathfrak{Y}^{(11,1,1)}
		&= P_{\,\young(\mu,\nu)} \otimes P_{\,\young(\rho)}
	, \\
	\mathfrak{Y}^{(0,21,1)}
		&= \mathrm{Id} \otimes P_{\,\young(\mu\rho,\nu)}
	, &
	\mathfrak{Y}^{(1,2,1)}
		&= P_{\,\young(\nu)} \otimes P_{\,\young(\mu\rho)}
	, &
	\mathfrak{Y}^{(1,11,1)}
		&= P_{\,\young(\rho)} \otimes P_{\,\young(\mu,\nu)}
	, \\
	\mathfrak{Y}^{(2,0,2)}
		&= P_{\,\young(\mu\rho)} \otimes \mathrm{Id}
	, &
	\mathfrak{Y}^{(0,2,2)}
		&= \mathrm{Id} \otimes P_{\,\young(\mu\rho)}
	, &
	\mathfrak{Y}^{(1,1,2)}
		&= P_{\,\young(\mu)} \otimes P_{\,\young(\rho)}
	.
\end{align*} 
We combine everything into a new map
\begin{equation} \label{eq:D-map}
	\mathfrak{D} = \bigoplus_{(\lambda)} \mathfrak{D}^{(\lambda)}
		\colon \bigoplus_{(\lambda)} \mathcal{D}_{(\lambda)}
			= \bigoplus_{(\lambda)} \bigoplus_{(d)} \mathcal{D}_{(\lambda),(d)}
		\to \bigoplus_{(\lambda)} \mathfrak{T}_{(\lambda)}
			= \mathfrak{T}^R ,
\end{equation}
which takes all tuples of diagonals of all coefficients $\TT^{(\lambda)}$, now individually labelled $\mathcal{D}_{(\lambda)(d)}$, and maps them to a direct sum of tensors (of different ranks and symmetries). This can then be composed with $\mathfrak{P}^R$ to give the parametrization 
\begin{equation} \label{eq:PD-map}
\mathfrak{P}^R\circ \mathfrak{D}\colon
	\bigoplus_{(\lambda)} \mathcal{D}_{(\lambda)} \rightarrow \mathfrak{R},
\end{equation}
which represents the formula \ref{sec:RiemannWeightSubspaces} written in diagonal (and off-diagonal) terms of the coefficients $\TT^{(\lambda)(d)} \in \mathcal{D}_{(\lambda),(d)}$.
 
However now, because of the Young symmetrization, not all diagonals are independent and the maps $\mathfrak{D}^{(\lambda)}$ and $\bar{\mathfrak{D}}^{(\lambda)}$ are no longer bijections (instead they are respectively surjective and injective). To capture the independent parts summarized in the tables \ref{tab:Diagonals}, \ref{tab:DiagonalsE0},  we define a map, analogous to $\mathfrak{D}^r$
\begin{equation} \label{eq:J-map}
	\mathfrak{J}^{(\lambda)}: \mathcal{J}_{(\lambda)} = \bigoplus_{(d)} \mathcal{J}_{(\lambda),(d)} \rightarrow \mathcal{D}_{(\lambda)} ,
	\quad \text{and} \quad
	\mathfrak{J} = \bigoplus_{(\lambda)} \mathfrak{J}^{(\lambda)}
		\colon \bigoplus_{(\lambda)} \mathcal{J}_{(\lambda)}
			\to \bigoplus_{(\lambda)} \mathcal{D}_{(\lambda)},
\end{equation}
which take the \emph{independent} diagonals of $\TT^{(\lambda)}$s and maps them to \emph{all} diagonals using the prescriptions in tables  \ref{tab:Diagonals}, \ref{tab:DiagonalsE0}. In the case of two $\hat{e}^{0}$s it is an identity, because there is only one diagonal, which is always independent. 

For instance,
\begin{align*}
\mathfrak{J}^{(22,0,0)} &\colon \mathcal{J}_{(22,0,0)(6)} \oplus \mathcal{J}_{(22,0,0)(1,3)} \oplus \mathcal{J}_{(22,0,0)\perp} \to \mathcal{D}_{(22,0,0)}
\end{align*}
maps the off-diagonal component $\TT^{(22,0,0)\perp}$ to itself and the independent parts $\TT^{(22,0,0)(6)}_{(\mu\nu)}, \TT^{(22,0,0)(1,3)}_{(\mu\nu)\rho}$ to the set of all diagonals by the formulas listed in the first column of the table \ref{tab:Diagonals}, namely
\[
	\mathfrak{J}^{(22,0,1)} \colon
	\begin{pmatrix}
		\TT^{(22,0,0)(6)}_{(\mu\nu)} \\
		\TT^{(22,0,0)(1,3)}_{(\mu\nu)\rho} \\
		\mathfrak{Y}^{(22,0,0)} \TT^{(22,0,0)\perp}_{\mu\nu\rho\sigma}
	\end{pmatrix}
	\mapsto
	\begin{pmatrix}
		T^{(0)}_\mu \\
		T^{(1)}_{\mu\nu} \\
		T^{(2)}_{\mu\nu} \\
		T^{(3)}_{\mu\nu} \\
		T^{(4)}_{\mu\nu} \\
		T^{(5)}_{\mu\nu} \\
		T^{(6)}_{\mu\nu} \\
		T^{(7)}_{\mu\nu} \\
		T^{(1,2)}_{\mu\nu\rho} \\
		T^{(1,3)}_{\mu\nu\rho} \\
		T^{(1,4)}_{\mu\nu\rho} \\
		T^{(2,3)}_{\mu\nu\rho} \\
		T^{(2,4)}_{\mu\nu\rho} \\
		T^{(3,4)}_{\mu\nu\rho} \\
		T^{\perp}_{\mu\nu\rho\sigma}
	\end{pmatrix}
	=
	\begin{pmatrix}
		0 \\
		0 \\
		0 \\
		0 \\
		0 \\
		0 \\
		\TT^{(22,0,0)(6)}_{(\mu\nu)} \\
		-\TT^{(22,0,0)(6)}_{(\mu\nu)} \\
		0 \\
		\TT^{(22,0,0)(1,3)}_{(\mu\nu)\rho} \\
		-\TT^{(22,0,0)(1,3)}_{\mu\nu\rho} \\
		-\TT^{(22,0,0)(1,3)}_{\mu\nu\rho} \\
		\TT^{(22,0,0)(1,3)}_{(\mu\nu)\rho} \\
		0 \\
		\mathfrak{Y}^{(22,0,0)} \TT^{(22,0,0)\perp}_{\mu\nu\rho\sigma}
	\end{pmatrix}
	.
\]

Altogether, we have defined a convenient parametrization of Riemann-symmetric tensors:

\begin{lemma}[Well-defined parametrization] \label{lem:Parametrization}
Consider maps $\mathfrak{P}^R$, $\mathfrak{D}$, $\mathfrak{J}$ described above in equations \eqref{eq:P-map}, \eqref{eq:D-map}, \eqref{eq:J-map}, whose composition
\[
	\mathfrak{P}^R\circ\mathfrak{D}\circ\mathfrak{J}\colon \bigoplus_{(\lambda)} \mathcal{J}_{(\lambda)} = \bigoplus_{(\lambda)} \bigoplus_{(d)} \mathcal{J}_{(\lambda),(d)} \rightarrow \mathfrak{R}
\]
maps the independent parameters $\TT^{(\lambda)(d)} \in \mathfrak{J}_{(\lambda),(d)}$ to a Riemann-symmetric tensor \eqref{eq:DecompositionOfRtoWeightSectors}, ultimately using the formulas \eqref{eq:DiagonalsT}, \eqref{eq:DecT} and \eqref{eq:Decomposition4Tensor}. The map $\mathfrak{P}^R\circ\mathfrak{D}\circ\mathfrak{J}$ is a \emph{bijection}. 
\end{lemma}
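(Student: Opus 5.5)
Since $\mathfrak{P}^R$ is already known to be a bijection $\mathfrak{T}^R\to\mathfrak{R}$ (shown at the end of section~\ref{sec:RiemannWeightSubspaces}), it suffices to prove that $\mathfrak{D}\circ\mathfrak{J}\colon \bigoplus_{(\lambda)}\mathcal{J}_{(\lambda)}\to\mathfrak{T}^R$ is a bijection. Both maps are direct sums over the labels $(\lambda)$, so I will check, for each of the fifteen summands separately, that $\mathfrak{D}^{(\lambda)}\circ\mathfrak{J}^{(\lambda)}\colon \mathcal{J}_{(\lambda)}\to\mathfrak{T}_{(\lambda)}$ is bijective. The candidate inverse is $\bar{\mathfrak{J}}^{(\lambda)}\circ\bar{\mathfrak{D}}^{(\lambda)}$, where $\bar{\mathfrak{J}}^{(\lambda)}$ simply reads off the bold (independent) entries of the tables~\ref{tab:Diagonals}, \ref{tab:DiagonalsE0} and the Young-symmetrized off-diagonal part.

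\emph{Surjectivity.} Take $\TT\in\mathfrak{T}_{(\lambda)}$. By the bijectivity of $\mathfrak{D}^k$ we have $\TT=\mathfrak{D}^k\bar{\mathfrak{D}}^k\TT$. Since $\TT$ already has the symmetry of $\lambda$, we also have $\TT=\mathfrak{Y}^{(\lambda)}\TT=\mathfrak{D}^{(\lambda)}\bar{\mathfrak{D}}^{(\lambda)}\TT$. The diagonals $\bar{\mathfrak{D}}^{(\lambda)}\TT$ satisfy exactly the relations derived from the mono-term and multi-term symmetries of $\lambda$, which are recorded in the tables. Hence $\bar{\mathfrak{D}}^{(\lambda)}\TT=\mathfrak{J}^{(\lambda)}\bar{\mathfrak{J}}^{(\lambda)}\bar{\mathfrak{D}}^{(\lambda)}\TT$, because $\mathfrak{J}$ rebuilds all the dependent diagonals from the independent ones by precisely those relations. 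This gives $\TT$ as an image.

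\emph{Injectivity.} I would compare dimensions rather than verify the composition $\bar{\mathfrak{J}}\bar{\mathfrak{D}}\mathfrak{D}\mathfrak{J}$ directly. The reason is that $\mathfrak{Y}^{(\lambda)}$ mixes diagonals: for example, symmetrizing a $(1,3)$-diagonal term also produces $(1,4)$, $(2,3)$ and $(2,4)$ terms, and that bookkeeping is exactly what the tables encode. Concretely, I would count the dimension of $\mathcal{J}_{(\lambda)}$. For the off-diagonal part this means counting diagonal-less tensors with the symmetry $\lambda$, which can be done by counting index tuples of pairwise distinct entries modulo the symmetry. For the independent diagonals it means counting free components with their stated symmetries. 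I would then check that the total equals $\dim\mathfrak{T}_{(\lambda)}$, given by the hook formula~\eqref{eq:ShiftedHookLengthFormula} applied factorwise as $\dim\lambda_+\cdot\dim\lambda_-$. Once surjectivity is known, this equality forces bijectivity. A cleaner alternative check is to stratify $\mathfrak{T}_{(\lambda)}$ by the coincidence pattern of the Greek indices. Each pattern-stratum of a $\lambda$-symmetric tensor is determined by the orbit representatives listed in bold, so summing the strata gives the dimension directly.

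The main obstacle is confirming that the tables are \emph{complete}. One must show that no further relations are implied by the multi-term Young symmetries beyond the listed ones (such as $\TT^{(21,1,0)(1,2)}=2\TT^{(21,1,0)(1,3)}_{[\mu\nu]\rho}$), and that the bold entries carry the correct residual symmetries (for instance the symmetric parts $\TT^{(6)}_{(\mu\nu)}$ and $\TT^{(1,3)}_{(\mu\nu)\rho}$). For the $(2,2,0)$, $(21,1,0)$ and $(1,21,0)$ columns, where several diagonals survive, I would verify the dimension count explicitly, symbolically in $n$, or by computer for several values of $n$, as was done for $\bar{\mathfrak{P}}^R\circ\mathfrak{P}^R$. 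Since every dimension involved is a polynomial in $n$, agreement for sufficiently many values of $n$ settles all $n$.
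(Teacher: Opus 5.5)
Your proof is correct, but it splits the work differently from the paper.

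\textbf{The paper's route.} After the same reduction to $\mathfrak{D}\circ\mathfrak{J}$, the paper proves \emph{injectivity} by computation. It composes with the injective extraction map $\bigoplus_{(\lambda)}\bar{\mathfrak{D}}^{(\lambda)}$, which yields a sparse system of $15\times 15$ block equations. It then solves these, starting from the equations with the fewest indices, and finds only the trivial solution. Surjectivity comes from the equality $\dim\bigoplus_{(\lambda)}\mathcal{J}_{(\lambda)}=\dim\mathfrak{T}^R$, which the paper asserts but does not spell out.

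\textbf{Your route.} You obtain \emph{surjectivity} structurally. Any $\TT\in\mathfrak{T}_{(\lambda)}$ satisfies $\TT=\mathfrak{Y}^{(\lambda)}\mathfrak{D}^k\bar{\mathfrak{D}}^k\TT$. Its diagonals lie in the image of $\mathfrak{J}^{(\lambda)}$, because the tabulated relations were derived as consequences of the symmetries. You then put the real content, completeness of the tables, into that same dimension identity, which you propose to verify.

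\textbf{Comparison.} Your route replaces the 225-block kernel computation by a count, and it makes explicit the one ingredient the paper leaves implicit. The paper's route tests directly for missing relations, but it still needs the dimension identity to close the argument.

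\textbf{Points to watch in the count.}
\begin{itemize}
\item For the diagonal-less part with a multi-term symmetry, the answer is not an orbit count. It is $f(\lambda_+)f(\lambda_-)$ per choice of distinct values. For example, $(22,0,0)$ contributes $2\binom{n}{4}$, and together with the bold diagonals the total is $\binom{n}{2}+3\binom{n}{3}+2\binom{n}{4}=n^2(n^2-1)/12$, as required.
\item Your stratified alternative, as phrased, presupposes that the bold entries are free within each stratum, which is exactly what is to be shown. To avoid circularity, compute each stratum's dimension independently. One way is as the multiplicity of the relevant irrep in the permutation module of that coincidence type; in the unmixed cases this is a Kostka number. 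Then compare with the bold count.
\end{itemize}
Since all the dimensions involved are polynomials of degree at most $4$ in $n$, agreement at five values of $n$ indeed suffices.
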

\begin{proof}
When it was introduced, we noted that the map $\mathfrak{P}^R$ is a bijection, hence it is enough to show that $\mathfrak{D} \circ \mathfrak{J}$ is a bijection. It will be sufficient to show that $\mathfrak{D} \circ \mathfrak{J}$ is injective, with bijectivity following from the equality of the dimensions of its domain and codomain.

The composition $\mathfrak{D} \circ \mathfrak{J}$ is somewhat complicated to analyze directly. Instead, recalling that the linear maps $\bar{\mathfrak{D}}^{(\lambda)}$ extracting all the diagonals are injective, cf.\ the discussion before~\eqref{eq:J-map}, it is sufficient to prove that $(\bigoplus_{(\lambda)} \bar{\mathfrak{D}}^{(\lambda)}) \circ \mathfrak{D} \circ \mathfrak{J}$ is injective. The advantage of that last composition is that it comes out to be rather sparse%
	\footnote{All the maps involved intertwine the action of the $U(1)^n \rtimes S_n$ stabilizer group and hence its $S^n$ subgroup. So their composition would be exactly block diagonal if the decomposition into diagonals by the operators~$\mathfrak{D}^k$ was fine enough to result in $S^n$-irreducible subspaces. But that would have required projecting out all the contractions with the $v_\mu$ isotropic vector, not just the restrictions to diagonals, thus greatly increasing the complexity of the decomposition. The decomposition that we have chosen to use merely resulted in a sufficiently simple sparse composition.} %
in block form. The total number of block equations to check $15\times 15 = 225$ is quite large (the codomain $\bigoplus_{(\lambda)} \mathfrak{T}_{(\lambda)}$ of $\mathfrak{D}$ has 15 summands, each of which is decomposed into 15 diagonals by $\bar{\mathfrak{D}}^{(\lambda)}$), but each equation is fairly simple. One example is
\[
	\TT^{(22,0,0)(2)}_{\sigma\rho} + \TT^{(22,0,0)(0)}_\alpha \Delta^\alpha_{\sigma\rho} = 0 .
\]
Starting with the simplest equations with the lowest number of indices, we can easily show that the trivial solution is the only possible one, implying that $\ker ((\bigoplus_{(\lambda)} \bar{\mathfrak{D}}^{(\lambda)}) \circ \mathfrak{D} \circ \mathfrak{J}) = 0$ and hence that $\mathfrak{D} \circ \mathfrak{J}$ is injective. That is, all constraints among the diagonals, passing from the map $\mathfrak{D}^4$~\eqref{eq:D-maps} to $\mathcal{D}$~\eqref{eq:D-map}, were already exhausted by the construction of the map $\mathfrak{J}$, reflected in relations in tables \ref{tab:Diagonals}, \ref{tab:DiagonalsE0}.
\end{proof}

\subsection{Parametrization of the zero weight subspace} \label{sec:ZeroWeightSubspace}

Formula \eqref{eq:DecompositionOfRtoWeightSectors} represents a general decomposition of tensor with (2,2) Young symmetry into subspaces with different weights. Here, we look more closely at the subspace with \emph{(global) zero weight}, its basis and significance for our investigations. 

The most general Riemann-symmetric tensor of zero weight is parametrized by three $\TT$ coefficients and reads
\begin{align} \label{eq:ZeroWeightRiemann}
\RR_{abcd} &= \TT^{(2,2,0)}_{\mu\nu\rho\sigma} (E^{(2,2,0)})^{\mu\nu\rho\sigma}_{abcd} + \TT^{(11,11,0)}_{\mu\nu\rho\sigma} (E^{(11,11,0)})^{\mu\nu\rho\sigma}_{abcd} + \TT^{(1,1,2)}_{\mu\nu} (E^{(1,1,2)})^{\mu\nu}_{abcd}.
\end{align}
In section \ref{sec:RiemannInBasis} we show that the Kerr-NUT-(A)dS spacetime Riemann tensor belongs to this subspace. Furthermore, in section \ref{sec:TwoFormFromRiemann}, we show implications of this form for construction of potential candidates on principal tensor and IDEAL characterization. First, we have a look at a special basis of this subspace, which is more useful in our calculations and is directly related to the principal tensor itself.

\subsubsection*{Construction of the $h$-basis in $D$ dimensions} \label{sec:hBasis}

For the construction of the Riemann-symmetric basis we decompose the principal tensor in the Darboux basis \eqref{eq:PrincipalTensor} or \eqref{eq:BasisEpm} into separate parts according to the block diagonal structure 
\begin{align} \label{eq:DecomposedH}
h = \sum_\mu \, x_\mu h_\mu, \quad
h_\mu \equiv \frac{\ii}{2} e^{+\mu} \wedge e^{-\mu} = e^\mu \wedge \hat{e}^\mu. 
\end{align}

Analogously to $h$, we also decompose the conformal Killing tensor
\begin{align} \label{eq:DecomposedQ}
Q = \sum_\mu x_{\mu}^2 Q_\mu, \quad
Q_\mu \equiv e^\mu \otimes e^\mu + \hat{e}^\mu \otimes \hat{e}^\mu
	= \tfrac{1}{2} (e^{+\mu} \otimes e^{+\mu} + e^{-\mu} \otimes e^{-\mu}) .
\end{align}
In the case of odd dimensions $\epsilon \neq 0$, $h$ does not contain the vector $\hat{e}^0$, so we have to include it into the basis in a different way. A simple choice is
\begin{align*}
Q_\epsilon \equiv \varepsilon \, g.
\end{align*}
Another possible choice is just $Q_\epsilon \equiv \varepsilon \, \hat{e}^0 \otimes \hat{e}^0$, but we will use the first possibility. 

To ensure proper symmetries, especially the first Bianchi identity, tensor products of the basis tensors have to be \emph{Young-symmetrized}, because the Riemann tensor has Young symmetry (22). For instance $h_{ab}h_{cd}$ has correct monoterm index symmetries, but does not fulfill the Bianchi identity, which is a multiterm symmetry. Hence we define Young-symmetrized (YS) product for antisymmetric tensors\footnote{The product $\odot$ differs from the projector $P_{(22)}$ defined by \eqref{eq:YoungSymmetrizer} just by an overall numerical factor.}
\begin{align} \label{eq:ODotProd1}
(h \odot f)_{abcd} = \tfrac{1}{4} h_{ab} f_{cd} + \tfrac{1}{4} f_{ab} h_{cd} + \tfrac{1}{8} h_{ac} f_{bd} + \tfrac{1}{8} f_{ac} h_{bd} - \tfrac{1}{8} h_{ad} f_{bc} - \tfrac{1}{8} f_{ad} h_{bc},
\end{align}
and for symmetric tensors (aka the \emph{Kulkarni-Nomizu} product)
\begin{align} \label{eq:ODotProd2}
(Q \odot k)_{abcd} = \tfrac{1}{8} Q_{ad} k_{bc} + \tfrac{1}{8} k_{ad} Q_{bc} - \tfrac{1}{8} Q_{ac} k_{bd} - \tfrac{1}{8} k_{ac} Q_{bd},
\end{align}
while the YS product of a symmetric and an antisymmetric tensors identically vanishes. 
When $h=f$ or $Q=k$ the formulae further simplify.

We define the following set of tensors in $D$ dimensions
\begin{align} \label{eq:hBasis}
\mathcal{B}_D \equiv \{(h_\mu \odot h_\mu),\, (h_\mu \odot h_\nu),\, (Q_\mu \odot Q_\nu),\, (Q_\mu \odot Q_\epsilon)\}, \quad \forall \mu \neq \nu.
\end{align}
and $\mathfrak{R}_0$ the linear space (of Riemann-symmetric tensors) spanned by this set.
Since every element contains different combinations of vectors $e^\mu, \hat{e}^\mu, \hat{e}^0$, they are manifestly linearly independent. Inclusion of terms $(Q_\mu\odot Q_\mu)$ does not bring any new information, because
\begin{align} \label{eq:RelationBetweenDiagonalQQAndHH}
(Q_\mu\odot Q_\mu) = -\frac{1}{3}(h_\mu\odot h_\mu).
\end{align}
Counting the number of elements in $\mathcal{B}_D$ gives the dimension of $\mathfrak{R}_0$ \begin{align*}
\mathrm{dim}(\mathfrak{R}_0) = n + \frac{n^2-n}{2} + \frac{n^2-n}{2} + \varepsilon n = n^2 + \varepsilon n.
\end{align*}

For future reference, let us state here lemmas which govern various contractions of the basis tensors. They can be easily checked by direct calculation, so we omit explicit proofs for brevity.  

\begin{lemma} \label{lem:ProductsOfHsQs}
Products of $h$s and $Q$s. 
\begin{align} 
\begin{alignedat}{4} \label{eq:ProductsOfHsQs}
h_{\mu\, a}^{~~~c} h_{\nu\, bc} &= \delta_{\mu \nu} Q_{\mu\, ab} & \qquad\qquad\qquad
Q_{\mu\, a}^{~~~c} h_{\nu\, bc} &= - \delta_{\mu \nu} h_{\mu\, ab} \\
Q_{\mu\, a}^{~~~c} Q_{\nu\, bc} &= \delta_{\mu \nu} Q_{\mu\, ab} &
Q_{\epsilon\, a}^{~~~c} h_{\mu\, bc} &= - h_{\mu\, ab} \\
Q_{\epsilon\, a}^{~~~c} Q_{\mu\, bc} &= Q_{\mu\, ab} &
Q_{\epsilon\, a}^{~~~c} Q_{\epsilon\, bc} &= Q_{\epsilon\, ab} \\
h_{a}^{~c} h_{\mu\, bc} &= x_\mu Q_{\mu\, ab} & 
h_{a}^{~c} Q_{\mu\, bc} &= x_\mu h_{\mu\, ab} \\
Q_{a}^{~c} h_{\mu\, bc} &= - x_\mu^2 h_{\mu\, ab} & 
Q_{a}^{~c} Q_{\mu\, bc} &= x_\mu^2 Q_{\mu\, ab}
\end{alignedat}  
\end{align}
\end{lemma}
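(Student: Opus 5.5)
The plan is to verify the identities in Lemma~\ref{lem:ProductsOfHsQs} directly in the real Darboux frame, where everything reduces to products of the orthonormal covectors $e^\mu,\hat e^\mu,\hat e^0$. Write $h_{\mu\,ab} = e^\mu_a\hat e^\mu_b - \hat e^\mu_a e^\mu_b$ and $Q_{\mu\,ab} = e^\mu_a e^\mu_b + \hat e^\mu_a\hat e^\mu_b$, as in \eqref{eq:DecomposedH} and \eqref{eq:DecomposedQ}. Orthonormality \eqref{eq:MetricInDarbouxFrame} gives $e^\mu_c e^{\nu c} = \hat e^\mu_c\hat e^{\nu c} = \delta^{\mu\nu}$, while all mixed contractions, and all contractions with $\hat e^0$, vanish. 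This immediately produces the factor $\delta_{\mu\nu}$ in each of the first four identities, because every term contains a contraction between a block-$\mu$ and a block-$\nu$ covector. It therefore suffices to compute within a single $2\times 2$ block.

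Within a block, $h_\mu$ acts on $\mathrm{span}(e^\mu,\hat e^\mu)$ as the matrix $J=\begin{pmatrix}0&1\\-1&0\end{pmatrix}$, and $Q_\mu$ acts as the identity projector $P_\mu$ onto that plane. The index placement $X_a{}^c Y_{bc}$ is the matrix product $XY^T$. Hence:
\begin{itemize}
\item $h_\mu h_\mu^T = JJ^T = P_\mu = Q_\mu$;
\item $Q_\mu h_\mu^T = J^T = -J$, giving $-h_\mu$;
\item $Q_\mu Q_\mu^T = P_\mu$, giving $Q_\mu$.
\end{itemize}
For the $Q_\epsilon=\varepsilon g$ identities, $g$ acts as the identity on the full tangent space. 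This gives $g\,h_\mu^T = -h_\mu$, $g\,Q_\mu = Q_\mu$ and $g\,g = g$. The last must be read with $\varepsilon^2=\varepsilon$, since $\varepsilon\in\{0,1\}$.

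The last four identities then follow by linearity from the expansions $h=\sum_\nu x_\nu h_\nu$ and $Q=\sum_\nu x_\nu^2 Q_\nu$. Only the $\nu=\mu$ term survives. For example:
\begin{itemize}
\item $h_a{}^c h_{\mu\,bc} = \sum_\nu x_\nu \delta_{\nu\mu} Q_{\mu\,ab} = x_\mu Q_{\mu\,ab}$;
\item $h_a{}^c Q_{\mu\,bc}$: using $h_\mu Q_\mu^T = J = h_\mu$, this equals $x_\mu h_\mu$.
\end{itemize}
The identity $Q_\nu h_\mu^T = -\delta_{\mu\nu}h_\mu$ gives $Q_a{}^c h_{\mu\,bc} = -x_\mu^2 h_{\mu\,ab}$, and the remaining case gives $x_\mu^2 Q_\mu$.

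There is no genuine obstacle here; the whole computation is routine. The only point requiring care is the sign convention from the transposed index placement, which is the source of the minus signs when $h_\mu$ sits in the second slot. Consistency with $Q=hh^T$ from \eqref{eq:KillingTensorQ} can serve as a check. As a cross-check, one could also redo the computation in the complex basis using Lemma~\ref{lem:ProductsOfEpm}.
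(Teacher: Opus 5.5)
Your argument is correct and is the same direct frame computation that the paper omits as routine: orthonormal contractions block by block, then linearity in $h=\sum_\nu x_\nu h_\nu$ and $Q=\sum_\nu x_\nu^2 Q_\nu$. One caveat: since $Q_\epsilon=\varepsilon g$, the identities $Q_{\epsilon\,a}{}^c h_{\mu\,bc}=-h_{\mu\,ab}$ and $Q_{\epsilon\,a}{}^c Q_{\mu\,bc}=Q_{\mu\,ab}$ actually come out as $-\varepsilon h_{\mu\,ab}$ and $\varepsilon Q_{\mu\,ab}$, so they hold only in the intended odd-dimensional case $\varepsilon=1$, where $Q_\epsilon=g$; the paper's statement has the same implicit restriction, and your $\varepsilon^2=\varepsilon$ remark rescues only the $Q_\epsilon Q_\epsilon$ identity, not these two.
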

In the calculations needed below, it is useful to use the decomposition $Q_\epsilon = \sum_\mu Q_\mu + e^0 e^0$ and the bilinearity of the $\odot$-product.
\begin{lemma} \label{lem:TracesOfHsQs}
Traces of $\mathcal{B}_D$ basis tensors.
\begin{align} \label{eq:TracesOfHsQs}
(h_{\mu }\odot h_\nu)^i_{~aib} &= \frac{3}{4} \delta_{\mu \nu} Q_{\mu ab} &
(h_{\mu }\odot h_\nu)^{ij}_{~~ij} &= \frac{3}{2} \delta_{\mu \nu} \nonumber \\
(Q_{\mu }\odot Q_\nu)^i_{~aib} &= \frac{1}{4} \delta_{\mu \nu} Q_{\mu ab} - \frac{1}{4}( Q_{\mu ab} + Q_{\nu ab})  &
(Q_{\mu }\odot Q_\nu)^{ij}_{~~ij} &= \frac{1}{2} \delta_{\mu \nu} - 1 \\
(Q_{\mu }\odot Q_\epsilon)^i_{~aib} &= -\frac{D-2}{8} Q_{\mu ab} -\frac{1}{4} Q_{\epsilon ab} &
(Q_{\mu }\odot Q_\epsilon)^{ij}_{~~ij} &= -\frac{D-1}{2} \nonumber 
\end{align}
\end{lemma}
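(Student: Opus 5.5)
The proof is a direct computation from the definitions \eqref{eq:ODotProd1} and \eqref{eq:ODotProd2} of the Young-symmetrized product. The only inputs are the product rules of Lemma~\ref{lem:ProductsOfHsQs} and a few elementary traces:
\[
h_\mu{}^{a}{}_{a}=0, \qquad Q_\mu{}^{a}{}_{a}=2, \qquad Q_\epsilon{}^{a}{}_{a}=D .
\]
In each case I will first compute the single trace $T^i{}_{aib}$, obtained by contracting the first and third slots, and then contract once more with $g^{ab}$ to obtain the full trace.

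For the antisymmetric case I contract slots $1$ and $3$ in each of the six terms of $(h_\mu\odot h_\nu)_{abcd}$. The two terms $h_{ab}f_{cd}+f_{ab}h_{cd}$ become $h_{\mu\,ia}h_\nu{}^i{}_b$ plus its partner. By antisymmetry and Lemma~\ref{lem:ProductsOfHsQs} each of these equals $\delta_{\mu\nu}Q_{\mu\,ab}$, so together they contribute $\tfrac12\delta_{\mu\nu}Q_\mu$. The two terms of type $h_{ac}f_{bd}$ produce $h_\mu{}^i{}_i$, which vanishes. The two terms of type $-h_{ad}f_{bc}$ become $-\tfrac18 h_{\mu\,ib}h_{\nu\,a}{}^{i}$. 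One sign flip from antisymmetry turns each into $+\tfrac18\delta_{\mu\nu}Q_{\mu\,ab}$, contributing $\tfrac14\delta_{\mu\nu}Q_\mu$. The total is $\tfrac34\delta_{\mu\nu}Q_{\mu\,ab}$, and contracting with $g^{ab}$ using $\operatorname{tr}Q_\mu=2$ gives $\tfrac32\delta_{\mu\nu}$.

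For the Kulkarni--Nomizu case I contract slots $1$ and $3$ in the four terms of \eqref{eq:ODotProd2}. Two kinds of term appear.
\begin{itemize}
\item Terms of type $Q_{ad}k_{bc}$ become products such as $Q_\mu{}^{i}{}_{b}\,Q_{\nu\,ai}$. By Lemma~\ref{lem:ProductsOfHsQs} these give $\delta_{\mu\nu}Q_\mu$ for two ordinary blocks, and $Q_\mu$ when one factor is $Q_\epsilon$, since $Q_\epsilon$ acts as the identity.
\item Terms of type $-Q_{ac}k_{bd}$ produce full traces times the other tensor, namely $-\tfrac18(\operatorname{tr}Q_\mu)Q_{\nu}-\tfrac18(\operatorname{tr}Q_\nu)Q_\mu$.
\end{itemize}
For $\mu,\nu$ both ordinary blocks this yields $\tfrac14\delta_{\mu\nu}Q_\mu-\tfrac14(Q_\mu+Q_\nu)$, and the full trace is $\tfrac12\delta_{\mu\nu}-1$. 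For the pair $(Q_\mu,Q_\epsilon)$ the same bookkeeping gives $\tfrac14 Q_\mu-\tfrac{D}{8}Q_\mu-\tfrac14Q_\epsilon$, which is $-\tfrac{D-2}{8}Q_\mu-\tfrac14 Q_\epsilon$. Its full trace is $-\tfrac{D-2}{4}-\tfrac{D}{4}=-\tfrac{D-1}{2}$, as claimed.

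There is no real obstacle here. The only delicate point is sign bookkeeping: I must fix consistently which slot pair is being traced, and track the sign flips from antisymmetry of $h_\mu$ when reordering indices to apply Lemma~\ref{lem:ProductsOfHsQs}. As an independent check, I would verify the $(Q_\mu\odot Q_\mu)$ entry against relation \eqref{eq:RelationBetweenDiagonalQQAndHH}: $-\tfrac13\cdot\tfrac34 Q_\mu=-\tfrac14Q_\mu$ agrees with $\tfrac14Q_\mu-\tfrac12Q_\mu$. The check can be made fully concrete by evaluating all traces in the Darboux frame components.
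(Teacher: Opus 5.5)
Your proposal is correct and is the same direct calculation the paper relies on (the paper omits the details). Tracing slots one and three term by term in the $\odot$-products and reducing with Lemma~\ref{lem:ProductsOfHsQs} and $\operatorname{tr}Q_\mu=2$ reproduces all six entries, and your sign bookkeeping for the $h_\mu$ terms is right; note only that the third row tacitly takes $Q_\epsilon=g$ (i.e.\ $\varepsilon=1$), as the statement itself does, which is what your use of $\operatorname{tr}Q_\epsilon=D$ encodes.
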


\begin{lemma} \label{lem:ProductsOfYSHsQs}
Contractions of $\mathcal{B}_{D}$ tensors with $h$. 
\begin{align} \label{eq:ProductsOfYSHsQs}
(h_{\mu }\odot h_\nu)_{abce} h_{\rho ~d}^{~e} &= -\frac{1}{8}\delta_{\mu \rho} \left( Q_{\rho\, ad} h_{\nu\, cb} + Q_{\rho\, bd} h_{\nu\, ac} + 2 Q_{\rho\, cd} h_{\nu\, ab} \right) \nonumber \\
& \quad - \frac{1}{8}\delta_{\nu \rho} \left( Q_{\rho\, ad} h_{\mu\, cb} + Q_{\rho\, bd} h_{\mu\, ac} + 2 Q_{\rho\, cd} h_{\mu\, ab} \right) \nonumber \\
(h_{\mu }\odot h_\nu)_{abef} h_{\rho }^{~ef} &= \frac{1}{2} \delta_{\mu \rho} \delta_{\nu \rho} h_{\rho\, ab} + \frac{1}{2} \delta_{\mu \rho} h_{\nu \, ab} + \frac{1}{2} \delta_{\nu \rho} h_{\mu \, ab} \nonumber \\
(Q_{\mu }\odot Q_\nu)_{abce} h_{\rho ~d}^{~e} &= - \frac{1}{8} \delta_{\nu \rho} \left( Q_{\mu\, ac} h_{\rho\, bd} - Q_{\mu\, bc} h_{\rho\, ad} \right) - \frac{1}{8} \delta_{\mu \rho} \left( Q_{\nu\, ac} h_{\rho\, bd} - Q_{\nu\, bc} h_{\rho\, ad} \right) \nonumber \\
(Q_{\mu }\odot Q_\nu)_{abef} h_{\rho}^{~ef} &= - \frac{1}{2} \delta_{\mu \rho} \delta_{\nu \rho} h_{\rho\, ab} \\
(Q_{\mu }\odot Q_\epsilon)_{abce} h_{\rho ~d}^{~e} &= -\frac{1}{8} (Q_{\mu\, ac} h_{\rho\, bd} - Q_{\mu\, bc} h_{\rho\, ad}) - \frac{1}{8} \delta_{\mu \rho}(Q_{\epsilon\, ac} h_{\rho\, bd} - Q_{\epsilon\, bc} h_{\rho\, ad}) \nonumber \\
(Q_{\mu }\odot Q_\epsilon)_{abef} h_{\rho}^{~ef} &= -\frac{1}{2} \delta_{\mu \rho} h_{\rho\, ab} \nonumber 
%
%
\end{align}
\end{lemma}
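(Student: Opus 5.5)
The identities in Lemma~\ref{lem:ProductsOfYSHsQs} are all bilinear in the basis tensors, so I would check them directly, using only the definition of the $\odot$-products \eqref{eq:ODotProd1}, \eqref{eq:ODotProd2} and the single-index contraction rules of Lemma~\ref{lem:ProductsOfHsQs}. The key inputs are
\[
h_{\mu\,a}{}^{c}h_{\rho\,bc}=\delta_{\mu\rho}Q_{\mu\,ab},\qquad Q_{\mu\,a}{}^{c}h_{\rho\,bc}=-\delta_{\mu\rho}h_{\mu\,ab},
\]
together with $Q_{\epsilon\,a}{}^{c}h_{\rho\,bc}=-h_{\rho\,ab}$. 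For each line I will first expand the $\odot$-product into its six (respectively four) terms. I will then contract the last index (or last pair) with $h_\rho$ term by term. Finally I will rewrite each resulting product through these rules, paying attention to index order and the antisymmetry of $h$.

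For the single-index contractions $(\cdot)_{abce}h_{\rho}{}^{e}{}_{d}$, in $(h_\mu\odot h_\nu)$ the index $e$ sits either on $h_\nu$ (terms $h_{\mu ab}h_{\nu ce}$, $h_{\mu ac}h_{\nu be}$, $h_{\mu ad}\to h_{\mu ae}h_{\nu bc}$, \dots) or on $h_\mu$. Contracting gives $\delta_{\nu\rho}Q_\rho$ times the remaining $h_\mu$, or the symmetric counterpart. The antisymmetry $h_{\rho}{}^{e}{}_{d}=-h_{\rho\,d}{}^{e}$ produces the overall sign $-1/8$. The coefficient $2$ on $Q_{\rho\,cd}h_{\nu\,ab}$ comes from the two $\tfrac14$ terms, while the other two structures come from the $\tfrac18$ terms. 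For the $Q\odot Q$ line, only the terms in which $e$ lies on $Q_\nu$ (or $Q_\mu$) survive, producing $-\delta_{\nu\rho}h_\rho$. The $Q_\mu\odot Q_\epsilon$ line is the same computation, with $Q_\epsilon$ acting as the identity on the span of $h_\rho$, so it carries no Kronecker delta.

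The double contractions $(\cdot)_{abef}h_\rho{}^{ef}$ I would obtain most economically by contracting the single-contraction results just found once more. Setting $d$ and $c$ into the trace requires $Q_{\rho}{}^{c}{}_{c}$-type and $Q_{\rho\,a}{}^{c}h_{\nu\,cb}$ contractions. These again reduce by Lemma~\ref{lem:ProductsOfHsQs}, together with $Q_\rho{}^c{}_c=2$. The diagonal case $\mu=\nu=\rho$ then picks up the extra $\delta_{\mu\rho}\delta_{\nu\rho}$ term, which arises from the trace of $Q_\rho$.

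The main obstacle is purely bookkeeping: tracking signs and index positions across the six terms, especially the slot exchanges inside $h_{ad}f_{bc}$ in \eqref{eq:ODotProd1}. To guard against errors, I would cross-check each identity in a single block ($n=1$, explicit $2\times2$ components) and for two blocks with $\mu\neq\nu$. Because all expressions are $U(1)^n\rtimes S_n$ covariant, these few cases fix every coefficient.
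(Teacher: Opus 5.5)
Your proposal is correct and is the same direct calculation the paper relies on (it omits the proof as routine). Expanding the $\odot$-products and contracting term by term with Lemma~\ref{lem:ProductsOfHsQs} reproduces all six identities, with the $Q_\epsilon$ lines read for $\varepsilon=1$, i.e.\ $Q_\epsilon=g$. One bookkeeping correction for your second route: the double contraction is minus the $cd$-trace of the single-contraction result, and in that trace $Q_\rho{}^{c}{}_{c}=2$ produces $\tfrac12\delta_{\mu\rho}h_\nu+\tfrac12\delta_{\nu\rho}h_\mu$, while the $\delta_{\mu\rho}\delta_{\nu\rho}h_\rho$ term comes from the $Q_{\rho\,a}{}^{c}h_{\nu\,cb}$-type contractions, not from the trace.
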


We mentioned that $h$ is invariant under $U(1)^n$. From the derivation \eqref{eq:DerOfZeroWeight} it is obvious that even the $h_\mu$ alone are invariant. Since $Q_\mu$ are second powers of $h_\mu$, they are invariant too. All tensors from the set \eqref{eq:hBasis} thus have totally zero weight and they form a subset of tensors of global zero weight, which are of the form \eqref{eq:ZeroWeightRiemann}. A linear combination of tensors \eqref{eq:hBasis} is a Riemann-symmetric tensor, which plays a crucial role in our investigations.

\begin{theorem} \label{the:ZeroWeightBasis}
Every Riemann-symmetric tensor that has totally zero weight under $U(1)^n$ transformations (definition~\ref{def:Weights}) is expressible in the adapted  h-basis \eqref{eq:hBasis}.
\end{theorem}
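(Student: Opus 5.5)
The plan is a dimension count. By the remarks preceding the theorem, every element of $\mathcal{B}_D$ is Riemann-symmetric and has totally zero weight. The elements are also linearly independent, so $\mathfrak{R}_0$ is contained in the space $\mathfrak{R}^{tz}$ of totally zero weight Riemann-symmetric tensors and has dimension $n^2+\eps n$. It therefore suffices to show $\dim \mathfrak{R}^{tz} \le n^2+\eps n$, and I will compute this dimension exactly. All computations are done after complexification, using the basis $e^{\pm\mu},\hat{e}^0$ in which $U(1)^n$ acts diagonally (definition~\ref{def:Weights}).

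\emph{Step 1: sector splitting.} Write $W_\mu=\mathrm{span}\{e^{+\mu},e^{-\mu}\}$. A monomial $\tilde e^{\mu}\tilde e^{\nu}\tilde e^{\rho}\tilde e^{\sigma}$ has totally zero weight only if, for each block $\mu$, it contains equally many $e^{+\mu}$ and $e^{-\mu}$. With four slots this leaves only the following index distributions:
\begin{itemize}
\item[(a)] all four indices in a single $W_\mu$;
\item[(b)] two indices in $W_\mu$ and two in $W_\nu$, with $\mu\neq\nu$;
\item[(c)] two indices in $W_\mu$ and two equal to $\hat{e}^0$;
\item[(d)] all four indices equal to $\hat{e}^0$.
\end{itemize}
Case (d) vanishes by antisymmetry. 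The Riemann symmetries (antisymmetry in each pair, pair exchange, and the first Bianchi identity) only permute index slots, so they preserve this splitting. Hence $\mathfrak{R}^{tz}$ is the direct sum over the unordered index sets $\{\mu\}$, $\{\mu,\nu\}$ and $\{\mu,0\}$ of the totally zero weight part of the Riemann-symmetric tensors living on that sector.

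\emph{Step 2: counting each sector.}
\begin{itemize}
\item[(a)] Riemann-symmetric tensors on the 2-dimensional space $W_\mu$ form a 1-dimensional space, spanned by $h_\mu\odot h_\mu$, which has weight zero. This gives $n$.
\item[(c)] The only independent components are $R_{a0b0}$ with $a,b\in W_\mu$, and they form $S^2W_\mu$. Its weights are $(\pm2)$ once each and $(0)$ once. This gives $\eps n$, realized by $Q_\mu\odot Q_\epsilon$.
\item[(b)] I view Riemann-symmetric tensors on $V=W_\mu\oplus W_\nu$ as the kernel of the $U(1)^2$-equivariant surjection $S^2(\Lambda^2V)\to\Lambda^4V$. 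Since $\Lambda^2 V=\Lambda^2W_\mu\oplus\Lambda^2W_\nu\oplus(W_\mu\otimes W_\nu)$, the mixed $(2,2)$ part of $S^2(\Lambda^2V)$ is $(\Lambda^2W_\mu\otimes\Lambda^2W_\nu)\oplus S^2(W_\mu\otimes W_\nu)$.
 \begin{itemize}
 \item[$\cdot$] $\Lambda^2W_\mu\otimes\Lambda^2W_\nu$ has weight $(0,0)$ and contributes one zero-weight state.
 \item[$\cdot$] $W_\mu\otimes W_\nu$ has the four weights $(\pm1,\pm1)$, each once. The zero-weight states of its symmetric square come from the pairings $(1,1)(-1,-1)$ and $(1,-1)(-1,1)$, giving two.
 \item[$\cdot$] $\Lambda^4V$ is 1-dimensional of weight zero.
 \end{itemize}
 Because weight multiplicities are additive and the map is surjective and equivariant, the zero-weight count is $1+2-1=2$. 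This matches $h_\mu\odot h_\nu$ and $Q_\mu\odot Q_\nu$.
\end{itemize}
Summing over sectors gives $\dim\mathfrak{R}^{tz}=n+2\binom n2+\eps n=n^2+\eps n=\dim\mathfrak{R}_0$, so $\mathfrak{R}_0=\mathfrak{R}^{tz}$.

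\emph{Main obstacle.} The delicate point is the count in sector (b). One must correctly account for the first Bianchi identity there, which is why I phrase it as the kernel of the equivariant map onto $\Lambda^4V$ rather than counting components by hand. One must also check that this kernel restricts surjectively onto weight spaces. Both follow from equivariance and complete reducibility of $U(1)^2$ representations.

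A secondary check concerns linear independence of $\mathcal{B}_D$. Since $Q_\epsilon=\sum_\rho Q_\rho+\hat e^0\hat e^0$, the element $Q_\mu\odot Q_\epsilon$ also contains terms in the (a) and (b) sectors, so independence is not immediate from disjoint supports. It follows from a triangularity argument: the $\hat e^0\hat e^0$ part, $Q_\mu\odot(\hat e^0\hat e^0)$, is the only contribution in sector (c). Independence within sectors (a) and (b) is immediate: $h_\mu\odot h_\mu$ is nonzero, and $h_\mu\odot h_\nu$ and $Q_\mu\odot Q_\nu$ have visibly different component patterns, for instance only the former has nonzero $\Lambda^2W_\mu\otimes\Lambda^2W_\nu$ component.
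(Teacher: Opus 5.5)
Your proof is correct, but it takes a different route from the paper's.

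\textbf{The paper's route.} The paper proves spanning directly. The $(22)$ Young projector commutes with $U(1)^n$, so a totally zero weight Riemann-symmetric tensor is a combination of Young-symmetrized totally zero weight monomials. The paper then lists explicit identities expressing the projections of $e^{+\mu}e^{-\mu}e^{+\nu}e^{-\nu}$, $e^{+\mu}e^{+\nu}e^{-\mu}e^{-\nu}$ and $e^{+\mu}e^{-\mu}\hat e^0\hat e^0$ through $h_\mu\odot h_\nu$, $Q_\mu\odot Q_\nu$ and $Q_\mu\odot Q_\epsilon$. The case $\mu=\nu$ is handled via $Q_\mu\odot Q_\mu=-\tfrac13 h_\mu\odot h_\mu$, and reducing the remaining slot orderings to these cases is left to the reader.

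\textbf{Your route.} You show $\mathfrak{R}_0\subseteq\mathfrak{R}^{tz}$ and compute $\dim\mathfrak{R}^{tz}$ exactly, sector by sector, from weight multiplicities. The key tool is the description of curvature tensors as the kernel of the surjective, equivariant Bianchi map $S^2(\Lambda^2V)\to\Lambda^4V$.

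\textbf{What each buys.} The paper's argument is constructive: it yields the actual expansion coefficients and needs no independence statement. It does, however, rest on explicit symmetrizer computations. Your argument avoids those computations entirely and is easy to verify. It would adapt readily to other stabilizers or tensor types, and it gives an independent derivation of $\dim\mathfrak{R}^{tz}=n^2+\eps n$. The price is that it is non-constructive and genuinely requires linear independence of $\mathcal{B}_D$.

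\textbf{On independence.} Your remark there is apt. Because $Q_\epsilon=\eps g$ contains $\sum_\rho Q_\rho$, the element $Q_\mu\odot Q_\epsilon$ spills into sectors (a) and (b). So the independence the paper calls ``manifest'' really needs your triangularity argument through the sector $\{\mu,0\}$.

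\textbf{Real versus complex.} Since the elements of $\mathcal{B}_D$ are real, your complex dimension count also settles the statement over the reals.
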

\begin{proof}
A generating set of terms for a Riemann-symmetric tensor is obtained by
applying $(22)$ Young symmetrization to monomials of the form $\tilde{e}^{\mu}
\tilde{e}^{\nu} \tilde{e}^{\rho} \tilde{e}^{\sigma}$, where $\tilde{e}^{\mu}$ could stand for $e^{+\mu}$ or $e^{-\mu}$.
Thus the result follows from the following identities valid for any $\mu, \nu$ that can be checked by direct calculation:
\begin{align*}
P_{\, \young(ac,bd)} \Big(e^{+\mu}_a e^{-\mu}_b e^{+\nu}_c e^{-\nu}_d\Big) &= \frac{4}{3}\Big( (Q_\mu \odot Q_\nu)_{abcd} - (h_\mu \odot h_\nu)_{abcd} \Big), \\
P_{\, \young(ac,bd)}  \Big(e^{+\mu}_a e^{+\nu}_b e^{-\mu}_c e^{-\nu}_d\Big) &= - \frac{8}{3}(Q_\mu \odot Q_\nu)_{abcd}, \\
P_{\, \young(ac,bd)} \Big(e^{+\mu}_a e^{-\mu}_b \hat{e}^{0}_c \hat{e}^{0}_d\Big) &= \frac{4}{3} \Big( (Q_\mu \odot Q_\epsilon)_{abcd} - \sum_\alpha (Q_\mu \odot Q_\alpha)_{abcd} \Big).
\end{align*}
All other symmetrized monomials reduce to one of the possibilities on the left-hand side (the terms with $\hat{e}^0_a$ appear only in odd
dimensions). The identities show that this generating set, and hence all
Riemann-symmetric tensors, can be expressed in terms of the
$h$-basis~\eqref{eq:hBasis}. Note that when $\mu=\nu$ the first two projections
degenerate because of relation~\eqref{eq:RelationBetweenDiagonalQQAndHH}.
\end{proof}

\begin{remark} \label{rem:CommonU1AngleNecessity}
If we considered $U(1)$ transformations by a common angle $e^{\pm \mu} \mapsto e^{\ii \fii} e^{\pm \mu}$, for each $\mu$, the $+$ and $-$ vectors in the basis monomials could belong to different subspaces, namely $e^{+\mu} e^{+\nu} e^{-\rho} e^{-\sigma}$. In such a case theorem \ref{the:ZeroWeightBasis} would not be valid anymore, thus $U(1)^n$ transformations are the key ingredient. 
\end{remark}

From theorem \ref{the:ZeroWeightBasis} it follows that a totally zero weight Riemann tensor is a linear combination of tensors from $\mathcal{B}_D$
\begin{multline}
\label{eq:RiemannInBasis}
R = \sum_{\substack{\mu,\nu \\ \mu<\nu}} \left[ \Omega_{\mu \nu} (h_\mu \odot h_\nu) + \alpha_{\mu \nu} (Q_\mu \odot Q_\nu) \right]
	\\
	+ \sum_{\mu} \left[ \tfrac{1}{2}\Omega_{\mu \mu} (h_\mu \odot h_\mu) + \, \alpha_{\mu}^{(\epsilon)} (Q_\mu \odot Q_\epsilon) \right] ,
\end{multline}
where $\Omega_{\mu\nu}, \alpha_{\mu\nu}$ and $\alpha_{\mu}^{(\epsilon)}$ are (undetermined) coefficient functions. We define $\Omega_{\mu\nu}, \alpha_{\mu\nu}$ to be symmetric matrices with respect to the Greek indices. We include also $\alpha_{\mu\mu}$s in the matrix, even though the formula \eqref{eq:RiemannInBasis} does not contain them.

\subsubsection*{Relation to the global zero weight subspace}

\begin{lemma} \label{lem:RelationBetweenParametrizations}
Consider the parametrization of the Riemann tensor of global zero weight \eqref{eq:ZeroWeightRiemann}.
The totally zero weight subspace~\eqref{eq:RiemannInBasis} corresponds to the sum of the diagonal subspaces (cf.~section~\ref{sec:DecompositionIntoDiagonals}) with coefficients $\TT^{(2,2,0)(0)}_\mu, \TT^{(2,2,0)(5)}_{\mu\nu},\TT^{(11,11,0)(6)}_{\mu\nu}$, plus $\TT^{(1,1,2)(0)}_\mu$ in odd dimensions.
The mutual relations between both sets of parameters are $(\mu \neq \nu)$
\begin{align}
\begin{alignedat}{2} \label{eq:RelationBetweenParametrizations}
\TT^{(2,2,0)(0)}_{\mu} &= \frac{3}{32}(2\alpha^{(\epsilon)}_\mu - 3\Omega_{\mu\mu}), &
\TT^{(2,2,0)(5)}_{\mu\nu} &= \frac{3}{64}(\alpha_{\mu\nu} - \frac{3}{2}\Omega_{\mu\nu} + \alpha^{(\epsilon)}_\mu + \alpha^{(\epsilon)}_\nu), \\
\TT^{(1,1,2)(0)}_{\mu} &= -\frac{3}{8} \alpha^{(\epsilon)}_\mu, &
\TT^{(11,11,0)(6)}_{\mu\nu} &= -\frac{3}{32}(\alpha_{\mu\nu} + \Omega_{\mu\nu} + \alpha^{(\epsilon)}_\mu + \alpha^{(\epsilon)}_\nu).
\end{alignedat}
\end{align}
\end{lemma}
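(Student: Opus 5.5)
The plan is to compare the two parametrizations directly through the projection maps $\bar{\mathfrak{P}}^R$ of~\eqref{eq:P-map}, followed by the diagonal extraction maps $\bar{\mathfrak{D}}^{(\lambda)}$.

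\emph{Step 1 (qualitative part).} First we identify which diagonal coefficients can be non-zero on the totally zero weight subspace. A monomial $E^{\tilde\mu\tilde\nu\tilde\rho\tilde\sigma}$ in~\eqref{eq:ZeroWeightRiemann} has totally zero weight exactly when each label $\mu$ carrying an $e^{+\mu}$ also carries an $e^{-\mu}$. For the $(2,2,0)$ sector, the coefficient $\TT^{(2,2,0)}_{\mu\nu\rho\sigma}$ multiplies $e^{-\mu}e^{+\nu}e^{-\rho}e^{+\sigma}$ with $\mu,\rho$ symmetrized and $\nu,\sigma$ symmetrized. Total zero weight forces $\{\mu,\rho\}=\{\nu,\sigma\}$ as multisets. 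The surviving index patterns are then $\mu=\nu=\rho=\sigma$ (diagonal $(0)$), $\mu=\nu,\rho=\sigma$, and $\mu=\sigma,\rho=\nu$. Using the $(\mu\rho)$ symmetry, the last two are the diagonals $(5)$ and $(7)$, and Table~\ref{tab:Diagonals} relates $(7)$ to $(5)$. Hence only $\TT^{(2,2,0)(0)}$ and $\TT^{(2,2,0)(5)}$ survive.

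For $(11,11,0)$ the antisymmetry in $(\mu\nu)$ and in $(\rho\sigma)$ forces $\{\mu,\nu\}=\{\rho,\sigma\}$ with $\mu\ne\nu$. This is the diagonal $(6)$ (with $(7)=-(6)$ by the table). For $(1,1,2)$ we need $\mu=\rho$, which is the diagonal $(0)$. All the off-diagonal $\perp$ parts and the triple-index diagonals vanish, because they contain at least one unpaired index. Combined with Theorem~\ref{the:ZeroWeightBasis} and Lemma~\ref{lem:Parametrization}, this shows that the totally zero weight subspace is the image of exactly these diagonal subspaces. A dimension count confirms it: $n+n(n-1)/2+n(n-1)/2+\eps n = n^2+\eps n=\dim\mathfrak{R}_0$.

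\emph{Step 2 (explicit relations).} Next we express each $\mathcal{B}_D$ element in the $e^{\pm}$ basis, using $h_\mu=\tfrac{\ii}{2}e^{+\mu}\wedge e^{-\mu}$, $Q_\mu=\tfrac12(e^{+\mu}e^{-\mu}+e^{-\mu}e^{+\mu})$ and $Q_\epsilon=\sum_\alpha Q_\alpha+\hat e^0\hat e^0$. Equivalently, we can invert the three identities from the proof of Theorem~\ref{the:ZeroWeightBasis}. We then substitute~\eqref{eq:RiemannInBasis} and apply the projectors $P^{(2,2,0)}$, $P^{(11,11,0)}$, $P^{(1,1,2)}$ from~\eqref{eq:ICProjectors22}, evaluating them at the appropriate index patterns.

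The contractions reduce to Lemma~\ref{lem:ProductsOfEpm}: each $e^{+\mu}\cdot e^{-\mu}$ contributes a factor $2$, and all other pairings vanish. After that, each component is a finite sum. The $\alpha^{(\epsilon)}_\mu$ terms feed into all three sectors through the $\sum_\alpha Q_\alpha$ part of $Q_\epsilon$, which explains why they appear in every formula. Finally, we account for the normalization of $\bar{\mathfrak{P}}^R\circ\mathfrak{P}^R$, whose blocks are multiples of the identity, and for the subtraction of $\TT^{(0)}$ in~\eqref{eq:Decomposition2Tensor} when extracting the $\perp$ diagonal $(5)$.

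\emph{Main obstacle.} The main difficulty is bookkeeping the numerical factors. This includes the Young symmetrizer normalizations $f(\lambda)/n!$, the $\tfrac18$ and $\tfrac14$ factors in $\odot$, the factor $\tfrac12$ on $\Omega_{\mu\mu}$, and the diagonal subtraction. I would check each coefficient against a low-dimensional case ($n=2$, $\eps=0,1$) by computer algebra, verifying that both sides of~\eqref{eq:RelationBetweenParametrizations} reproduce the same tensor.
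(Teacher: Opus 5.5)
Your proposal is correct and is essentially the paper's argument. Both are a direct comparison of \eqref{eq:ZeroWeightRiemann} with \eqref{eq:RiemannInBasis}: the paper runs it in the embedding direction (insert the diagonal coefficients, expand the Young-symmetrized monomials via the identities behind Theorem~\ref{the:ZeroWeightBasis}, and regroup), while you project; your weight-counting Step~1 is a cleaner justification of why only these four diagonals survive.

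One warning for the bookkeeping: the computation gives $\TT^{(2,2,0)(5)}_{\mu\nu}=\tfrac{3}{64}\big(\alpha_{\mu\nu}-3\Omega_{\mu\nu}+\alpha^{(\epsilon)}_\mu+\alpha^{(\epsilon)}_\nu\big)$, not the printed $-\tfrac32\Omega_{\mu\nu}$. For example, $R=h_1\odot h_2$ (i.e.\ $\Omega_{12}=1$, all else zero) has $\TT^{(2,2,0)(5)}_{12}=-\tfrac{9}{64}$ and $\TT^{(11,11,0)(6)}_{12}=-\tfrac{3}{32}$. The printed coefficient therefore looks like a misprint, and $-3$ is also what makes the paper's later reduction of \eqref{eq:EquationsForTs} to \eqref{eq:ICZeroWeightConstraints} come out as stated; your $n=2$ cross-check should be expected to flag exactly this coefficient.
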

\begin{proof}
A long, but straightforward calculation. We take the formula \eqref{eq:ZeroWeightRiemann} and use the relations \eqref{eq:RelationBetweenParametrizations} and definitions of embeddings from the formula \eqref{eq:DecompositionOfRtoWeightSectors}. By a suitable rearranging of terms, using definitions of $h_\mu, Q_\mu, Q_\epsilon$ we arrive at formula \eqref{eq:RiemannInBasis}.
\end{proof}

Taking into account the number of independent diagonals of all global zero weight Riemann-symmetric tensors in table \ref{tab:Diagonals}, we see that its subspace of totally zero weight tensors is much smaller. Compared to the general decomposition \eqref{eq:DecompositionOfRtoWeightSectors}, formula \eqref{eq:RiemannInBasis} represents a very limited number of possibilities.

\subsection{The Riemann tensor in the $h$-basis} \label{sec:RiemannInBasis}

In the previous section we discussed a linear space of Riemann-symmetric tensors of global zero weight and introduced its special subspace of totally zero weight tensors, which are described by the adapted $h$-basis \eqref{eq:hBasis} constructed from the principal tensor. Its physical relevance is justified in the following theorem.

\begin{theorem} \label{the:RiemannInBasis}
Let $R_{abcd}$ be the Riemann curvature tensor of Kerr-NUT-(A)dS
spacetime~\eqref{eq:HigherDimensionalKNAdSmetric}, with $h_{ab}$ its principal tensor~\eqref{eq:PrincipalTensor}. Then $R_{abcd}$ belongs to the subspace
of totally zero weight Riemann-symmetric tensors \eqref{eq:RiemannInBasis},
namely
\begin{align*}
R &= \sum_{\substack{\mu,\nu \\ \mu<\nu}} \left[ \Omega_{\mu \nu} (h_\mu \odot h_\nu) + \alpha_{\mu \nu} (Q_\mu \odot Q_\nu) \right] + \sum_{\mu} \left[ \tfrac{1}{2}\Omega_{\mu \mu} (h_\mu \odot h_\mu) + \, \alpha_{\mu}^{(\epsilon)} (Q_\mu \odot Q_\epsilon) \right].
\end{align*}
The coefficients $\Omega_{\mu\nu}, \alpha_{\mu\nu}$ and $\alpha_{\mu}^{(\epsilon)}$ are explicitly expressed as $(\mu\neq\nu)$
\begin{align}
\begin{alignedat}{2} \label{eq:BasisCoefsToHamFcions}
\Omega_{\mu \mu} &= \frac{8}{3}(D_\mu - 2\varepsilon \, E_\mu), & \qquad\qquad\qquad
\alpha_{\mu \nu} &= -8 S_{\mu \nu} + 8 \varepsilon \, (E_\mu + E_\nu), \\
\Omega_{\mu \nu} &= 8 F_{\mu \nu} & \qquad\qquad\qquad
\alpha_{\mu}^{(\epsilon)} &= -8 \varepsilon \, E_\mu, 
\end{alignedat}
\end{align}
in terms of the functions $S_{\mu \nu}, F_{\mu \nu}, D_{\mu}, E_{\mu}$
defined in \eqref{eq:CurvatureFunctions}.
\end{theorem}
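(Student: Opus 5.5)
The plan is to start from the explicit curvature two-forms \eqref{eq:HamamotoFormulas} and reassemble the Riemann tensor via \eqref{eq:RiemmanFromRforms}. The first step is qualitative: show $R$ is totally zero weight, so that Theorem~\ref{the:ZeroWeightBasis} guarantees an expansion of the form \eqref{eq:RiemannInBasis}. The second step fixes the coefficients by matching components.

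\textbf{Step 1: total zero weight.} Inspecting \eqref{eq:HamamotoFormulas}, every nonzero Darboux component of $R$ in the real frame $(e^\mu,\hat e^\mu,\hat e^0)$ has one of the following index structures:
\begin{itemize}
\item[(i)] $R_{\mu\nu\mu\nu}$, $R_{\mu,n+\nu,\mu,n+\nu}$ and their images under $e\leftrightarrow\hat e$ (coefficient $S_{\mu\nu}$);
\item[(ii)] $R_{\mu\nu,n+\mu,n+\nu}$ and the like (coefficient $F_{\mu\nu}$);
\item[(iii)] $R_{\mu,n+\mu,\rho,n+\rho}$ (coefficients $D_\mu$, $F_{\mu\rho}$);
\item[(iv)] $R_{\mu,0,\mu,0}$ and $R_{n+\mu,0,n+\mu,0}$ (coefficient $E_\mu$).
\end{itemize}
In each block $\mu$, the components come in the combinations $e^\mu\otimes e^\mu+\hat e^\mu\otimes\hat e^\mu$ or $e^\mu\wedge\hat e^\mu$, together with the crossed structure $e^\mu e^\nu\hat e^\mu\hat e^\nu$ with equal coefficients. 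These are exactly the $U(1)_\mu$-invariant combinations. Checking this term by term using $e^{\pm\mu}$ shows that each block carries weight zero. Since the equality of coefficients (e.g.\ the $S_{\mu\nu}$ multiplying both $e^\mu\wedge e^\nu$ and $\hat e^\mu\wedge\hat e^\nu$ in $\mathcal R_{\mu\nu}$ and $\mathcal R_{n+\mu,n+\nu}$) is what enforces invariance, I would verify it systematically for each line of \eqref{eq:HamamotoFormulas}.

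\textbf{Step 2: fixing the coefficients.} Because the elements of $\mathcal B_D$ involve disjoint sets of frame vectors, each coefficient can be read off from a single representative component, with lower-order terms removed where they overlap. Using the explicit forms \eqref{eq:ODotProd1}, \eqref{eq:ODotProd2} together with $h_\mu=e^\mu\wedge\hat e^\mu$ and $Q_\mu=e^\mu e^\mu+\hat e^\mu\hat e^\mu$, the matching goes as follows.
\begin{itemize}
\item $R_{\mu\nu\mu\nu}$ for $\mu\ne\nu$ receives contributions from $\alpha_{\mu\nu}(Q_\mu\odot Q_\nu)$ and from $\alpha^{(\epsilon)}_\mu,\alpha^{(\epsilon)}_\nu$, since $Q_\epsilon=\sum_\alpha Q_\alpha+\hat e^0\hat e^0$. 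Equating to $S_{\mu\nu}$ gives $\alpha_{\mu\nu}$ once $\alpha^{(\epsilon)}$ is known.
\item $R_{\mu0\mu0}=E_\mu$ involves only $(Q_\mu\odot Q_\epsilon)$, so it gives $\alpha^{(\epsilon)}_\mu=-8\eps E_\mu$ directly.
\item $R_{\mu,n+\mu,\nu,n+\nu}$ for $\mu\ne\nu$ isolates $\Omega_{\mu\nu}$ through $(h_\mu\odot h_\nu)$, which has entry $\tfrac14\cdot\ldots$. Its $2F_{\mu\nu}$ value, taken together with the crossed components $R_{\mu\nu,n+\mu,n+\nu}=F_{\mu\nu}$ (which receive the $\tfrac18$ contributions of the same product), should consistently give $\Omega_{\mu\nu}=8F_{\mu\nu}$.
\item $R_{\mu,n+\mu,\mu,n+\mu}=D_\mu$ receives $\tfrac12\Omega_{\mu\mu}(h_\mu\odot h_\mu)$ plus the $Q_\mu\odot Q_\mu$ part hidden in $Q_\mu\odot Q_\epsilon$. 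Via \eqref{eq:RelationBetweenDiagonalQQAndHH}, this produces the correction $-2\eps E_\mu$.
\end{itemize}

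\textbf{Main obstacle.} The difficulty is bookkeeping rather than conceptual. One has to track the normalizations of $\odot$, the factor conventions in \eqref{eq:RiemmanFromRforms} (sum over $\alpha<\beta$, wedge products), and the hidden contributions of $Q_\epsilon=\sum_\alpha Q_\alpha+\hat e^0\hat e^0$ that mix $\alpha^{(\epsilon)}$ into the $\alpha_{\mu\nu}$ and $\Omega_{\mu\mu}$ equations.

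The overdetermined system is a useful check. There are more component equations than unknowns, for instance the $F_{\mu\nu}$ entries appear in both $\mathcal R_{\mu,n+\mu}$ and $\mathcal R_{\mu\nu}$. Since Theorem~\ref{the:ZeroWeightBasis} guarantees solvability, these redundant equations serve to confirm the conventions. I would carry out this final matching with \texttt{xAct}, checking the full tensor identity $R-(\text{ansatz})=0$.
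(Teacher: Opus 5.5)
Your proposal is correct and takes the same route as the paper. The paper's proof, deferred to the thesis, is exactly this direct reassembly of $R$ from the curvature two-forms \eqref{eq:HamamotoFormulas} into the $h$-basis. Your matching also reproduces \eqref{eq:BasisCoefsToHamFcions}: one finds $R_{\mu,n+\mu,\nu,n+\nu}=\tfrac14\Omega_{\mu\nu}$, $R_{\mu\nu\mu\nu}=-\tfrac18(\alpha_{\mu\nu}+\eps\alpha^{(\epsilon)}_\mu+\eps\alpha^{(\epsilon)}_\nu)$, $R_{\mu0\mu0}=-\tfrac18\eps\alpha^{(\epsilon)}_\mu$ and $R_{\mu,n+\mu,\mu,n+\mu}=\tfrac38\Omega_{\mu\mu}-\tfrac14\eps\alpha^{(\epsilon)}_\mu$. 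Step 1 is logically redundant once you verify the full identity $R-(\text{ansatz})=0$, because every element of $\mathcal{B}_D$ is already totally zero weight.
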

\begin{proof}
The proof is a straightforward calculation, so we omit it here for brevity. It can be found in \cite[Theorem 2]{MatejovThesis}.
\end{proof}

The relation \eqref{eq:RiemannInBasis} represents a simple compact form of the Riemann tensor, which captures its special structure following from existence of the principal tensor. Notice that \eqref{eq:RiemannInBasis} contains only combinations of the special products $e^\mu\wedge \hat{e}^\mu, e^\mu\otimes e^\mu, \hat{e}^\mu\otimes \hat{e}^\mu$, via $h_\mu$ and $Q_\mu$, while curvature two forms $\RR_{\mu\nu}$ \eqref{eq:HamamotoFormulas} contain all other combinations like $e^\mu \wedge \hat{e}^\nu$, etc.\ as well. Thus, it is not obvious at first sight why $R \in \mathfrak{R}_0$ at all. But in the end, the terms from $\RR_{\mu\nu}$ miraculously combine into \eqref{eq:RiemannInBasis}. 

\begin{remark} \label{rem:TypeDConditionAnalogy}
The successful IDEAL characterization of the Kerr metric \cite{FerrandoSaez2009} relied heavily on the type~D condition for the Weyl tensor in 4D written with the help of the self-dual canonical bivector $\mathcal{U}$. In higher dimensions no analogous formula is known. In \cite[section 2.1.5]{MatejovThesis} we demonstrated that $\mathcal{U}$ is directly related to the decomposed principal tensor parts. Perhaps, the formula \eqref{eq:RiemannInBasis} could be regarded as a generalization of the type~D condition in 4D.
\end{remark}

\section{Construction of a 2-form from the Riemann tensor} \label{sec:TwoFormFromRiemann}

If we could covariantly construct the Kerr-NUT-(A)dS principal tensor $h_{ab}$ directly from the curvature, due to the strong uniqueness properties from proposition \ref{th:Uniqueness}, we would already achieve an IDEAL characterization of the Kerr-NUT-(A)dS family.
Thus, it is worth exploring the possibility of construction a two-form from the Riemann tensor. It is well-know that contraction of two indices of the Riemann tensor yields zero or the Ricci tensor (up to a sign). Thus, we cannot construct a two-form directly from itself. Thus, at the very least, we must employ more than one copy of the Riemann tensor. After some attempts, any 2-form that we were able to build in this way turned out to be identically zero, which we were able to explain by the following no-go theorem:
\begin{theorem} \label{th:NoTwoFormFromRiemann}
Let $R_{abcd}$ be the Riemann curvature tensor of totally zero weight \eqref{eq:RiemannInBasis} and $H_{ab}$ a two-form constructed from its covariant monomials (tensor products of any number of $R_{abcd}$ with various indices contracted by the metric). Then $H_{ab} \equiv 0$. 
\end{theorem}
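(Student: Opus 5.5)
The plan is to exploit a discrete symmetry of the totally zero weight curvature that is compatible with the metric but reverses every $2$-form built from the blocks. Consider the orthogonal transformation $\tau\in O(D)$ that reflects in each $2$-plane, $\tau\colon e^\mu\mapsto e^\mu$, $\hat e^\mu\mapsto -\hat e^\mu$ for all $\mu$, and $\hat e^0\mapsto \hat e^0$. Equivalently, $\tau$ exchanges $e^{+\mu}\leftrightarrow e^{-\mu}$. Under this map one has
\[
h_\mu\mapsto -h_\mu, \qquad Q_\mu\mapsto Q_\mu, \qquad Q_\epsilon\mapsto Q_\epsilon, \qquad g\mapsto g .
\]
Every element of the $h$-basis~\eqref{eq:hBasis} is therefore $\tau$-invariant: $(h_\mu\odot h_\nu)$ picks up $(-1)^2=1$, and the $Q\odot Q$ terms are untouched. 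Consequently any $R$ of the form~\eqref{eq:RiemannInBasis} satisfies $\tau^*R=R$, with the scalar coefficients $\Omega,\alpha$ left unchanged, since they are functions and not tensors.

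Next I will use naturality of covariant monomials. A monomial $H_{ab}$ is built from $R$ and $g$ only by tensor products and metric contractions, so it is $O(D)$-equivariant: $H[\tau^*R]=\tau^*H[R]$, because $\tau$ preserves $g$. Since $\tau^*R=R$, we obtain $\tau^*H=H$. Combined with $U(1)^n$-invariance, which follows the same way because $R$ is totally zero weight, $H$ must be a totally zero weight $2$-form.

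The third step identifies such $2$-forms. By the weight analysis of Definition~\ref{def:Weights}, a totally zero weight antisymmetric $2$-tensor can only contain $e^{+\mu}\wedge e^{-\mu}$, since any term containing $\hat e^0$ would have nonzero weight or would need a second $\hat e^0$, which vanishes under antisymmetry. Hence $H=\sum_\mu c_\mu h_\mu$. Applying $\tau$ gives $\tau^*H=-H$. Together with $\tau^*H=H$ this forces $H=0$.

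The main point needing care is the $U(1)^n$-invariance argument. Each $U(1)^n$ element lies in $SO(D)$ and preserves $R$ as a tensor, because totally zero weight means invariance. Equivariance then again gives invariance of $H$. The only subtlety is that in odd dimension, or when orientation matters, one must not use the volume form. The statement only allows metric contractions, so $O(D)$-equivariance, including the reflection $\tau$, is legitimate. This also explains why the obstruction disappears if the Levi-Civita tensor or derivatives of $R$ are allowed. I will also check that $\tau$ is indeed a reflection, with determinant $(-1)^n$, and note that this is harmless precisely because no $\epsilon$-tensor appears.
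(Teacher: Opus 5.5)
Your proof is correct. It uses the same $\mathbb{Z}_2$ parity as the paper's proof, but implements it differently.

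The paper stays inside the algebra generated by $h_\mu$, $Q_\mu$, $Q_\epsilon$. By the product rules of Lemma~\ref{lem:ProductsOfHsQs}, contractions never leave this algebra, so any $2$-form must be $\sum_\mu C_\mu h_\mu$. The paper then counts $h_\mu$ factors mod $2$: every $h$-basis element carries $0$ or $2$ of them, $Q\cdot h$ contractions preserve the count, and $h\cdot h\to Q$ and $h_{\mu\,a}{}^a=0$ lower it by two.

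Your $\tau$ (equivalently $e^{+\mu}\leftrightarrow e^{-\mu}$) is exactly the operator whose eigenvalue is that parity. You replace the contraction bookkeeping by two equivariance statements:
\begin{itemize}
\item $O(D)$-naturality of metric contractions gives $\tau^*H=H$;
\item $U(1)^n$-invariance plus a one-line weight count pins $H$ to $\operatorname{span}\{h_\mu\}$, replacing the paper's closure-under-contraction step.
\end{itemize}
This buys generality and transparency. The conclusion holds for every $O(D)$-equivariant algebraic function of $R$, not just polynomial monomials, and it shows exactly where orientation could enter. The paper's route is more elementary given lemmas it needs anyway. It also exhibits concretely how contracted monomials collapse onto $h_\mu$, $Q_\mu$, $Q_\epsilon$.

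Your closing aside, however, is too quick.
\begin{itemize}
\item \emph{Levi-Civita tensor.} We have $\det\tau=(-1)^n$. In odd $D$, the map $\tau'$ that additionally sends $\hat e^0\mapsto-\hat e^0$ also fixes $R$ and flips every $h_\mu$, with determinant $(-1)^{n+1}$. So one of $\tau,\tau'$ lies in $SO(D)$ unless $D\equiv 2 \pmod 4$, and your argument then also rules out $2$-forms built with the volume form.
\item \emph{Derivatives.} In even $D$ let $\tau'=\tau$. Then $\tau'$ is the differential, at its fixed points, of the local isometry $\psi_k\mapsto 2c_k-\psi_k$ of the background, which maps $h\mapsto -h$. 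Every point is such a fixed point. Hence any $2$-form built from $g$ and $\nabla^k R$ by metric contractions may be nonzero, but it must have vanishing components along every $h_\mu$ (indeed along every $e^\mu\wedge\hat e^\nu$). Derivatives therefore do not remove the obstruction to recovering $h$ itself.
\end{itemize}
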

\begin{proof}
The Riemann tensor of totally zero weight can be expressed in the adapted basis \eqref{eq:RiemannInBasis} according to theorem \ref{the:ZeroWeightBasis}. It is solely composed of $h_{\mu\, ab}, \, Q_{\mu\, ab}, \, Q_{\epsilon\, ab}$ and their tensor products, so is every monomial $R_{a...} \dots R_{b...}$, prior to contractions. According to \eqref{eq:ProductsOfHsQs} every contraction of these tensors results again in $h_{\mu\, ab},\, Q_{\mu\, ab}$ or $Q_{\epsilon\, ab}$. Thus, taking antisymmetry into account, every two-form built from monomials of $R_{abcd}$ must be of the form
\begin{align}
H_{ab} = \sum_{\mu} C_\mu h_{\mu\, ab} ,
\end{align}
for some coefficients $C_\mu$.
On the other hand, every element of adapted $h$-basis~\eqref{eq:hBasis} contains either 0 or 2 $h_\mu$s, so the total number of $h_\mu$s in a monomial is even.
The $Q\cdot h$ contractions keep the number of $h_\mu$'s the same, while $h_{\mu, a}{}^a = 0$ and $h_{\mu\, a}{}^c h_{\nu\, bc} = \delta_{\mu\nu} Q_{\mu\, ab}$ reduce the number by $2$. So no non-vanishing expression with an odd number of $h_\mu$s can result from these operations.
\end{proof}

The immediate consequence of the theorem is that the simple approach to characterization using only the Riemann tensor does not work. Since the Riemann tensor itself cannot produce a two-form, we would have to try to employ its covariant derivative. This possibility was left to future invetigation.

\section{Integrability condition of the CKY equation} \label{sec:GeneralizedIC} 

An IDEAL characterization requires the spacetime Riemann tensor $R$ (and its concomitants) to satisfy certain equations, so from its point of view it is reasonable to search for and investigate various constraints, which the Riemann tensor obeys. One of those constraints is the integrability condition of the conformal Killing-Yano (CKY) equation \cite[Eq.~(5.77)]{FKK}, namely
\begin{align} \label{eq:IntegrabilityConditionCKYE}
(D-2)R^{ab}{}_{e[c} h^e{}_{d]} - \delta^{[a}_{[c} R^{b]}{}_{d]}{}^{ef} h_{ef} - 2 R^{[a}{}_{e} \delta^{b]}_{[c} h^{e}{}_{d]} = 0,
\end{align}
which, on a given background geometry, is a linear homogeneous algebraic condition on $h_{ab}$, $\mathcal{J}[h]=0$, where $\mathcal{J}\colon {\yng(1,1)} \to {\yng(1,1) \otimes \yng(1,1)}$. However, we can also interpret it as a linear homogeneous algebraic condition for $R_{ijkl}$, if the CKY tensor $h_{ab}$ is already known.
We define $\mathcal{I}\colon {\yng(2,2)} \mapsto {\yng(1,1) \otimes \yng(1,1)}$, with
\begin{equation} \label{eq:OperatorIC}
\mathcal{I}_{abcd}{}^{ijkl} \equiv P_{\young(ik,jl)}
	\left( -(D-2)\delta^i_{[a} \delta^j_{b]}h^k_{~[c}\delta^l_{d]} + h^{ij}\delta^k_{[a}g_{b][c}\delta^l_{d]} - 2 g^{ik} \delta^l_{[a} g_{b][c} h^j_{~d]} \right) ,
\end{equation}
so that the equation \eqref{eq:IntegrabilityConditionCKYE} reads $\mathcal{I}_{abcd}{}^{ijkl} R_{ijkl} = 0$. This equation makes sense not only on higher-dimensional Kerr-NUT-(A)dS spacetimes, but also on any spacetime with a given generic 2-form $h_{ab}$ (in the sense of definition~\ref{def:GeneralH} below). Understanding the strength of this condition might be important in the search for IDEAL characterization equations.
Our results also suggest that the strength of analogous integrability conditions for other variations on Killing tensors~\cite{Batista} should not be underestimated when studying other uniqueness or rigidity problems.

\begin{remark} \label{rem:TowardsIdealIC}
Although equation~\eqref{eq:IntegrabilityConditionCKYE} is not IDEAL in nature (it depends on an external tensor field $h_{ab}$), it is not out of the question that it could lead to some more IDEAL condition. For instance, treating the integrability condition $\mathcal{J}[h]=0$ as a rectangular system of equations for $h_{ab}$, its solvability could be expressed purely in terms of the vanishing of some matrix minors of this system (analogous to the vanishing of the determinant for a square linear system), which should have a covariant expression purely in terms of $R_{abcd}$, albeit a complicated one.
\end{remark}

\begin{definition} \label{def:GeneralH}
Consider a $(2n+\varepsilon)$-dimensional spacetime $(\man,g)$ equipped with an orthonormal frame $\{e^\mu\}$ and $n$ functionally independent functions $x_\mu$ defining the following tensor field
\begin{align} \label{eq:CKYlikeTensor}
h_{ab} = \sum_{\mu} x_{\mu} e^{\mu} \wedge e^{n+\mu}.
\end{align}
Then we define the operator of the integrability condition of CKY equation $\mathcal{I}$ by formula \eqref{eq:OperatorIC} with $h_{ab}$ defined above. 
\end{definition}

The functions $x_\mu$ have no longer the meaning of coordinates, hence $\mathcal{I}$ can be defined in every spacetime. The choice is, of course, not unique nor is the splitting of the tangent space by the orthogonal frame. 
It turns out that the operator $\mathcal{I}$ has an important property, which will be useful in our further calculations.

\begin{theorem} \label{the:ICZeroWeightTheorem}
The operator $\mathcal{I}$ \eqref{eq:OperatorIC} intertwines the action of the
stabilizer group $U(1)^n \rtimes S_n$ on its domain and codomain. Hence, in
particular, it preserves the global and all partial weight subspaces.
\begin{proof}
Once observed, this result is fairly obvious. Each element of the stabilizer
group naturally acts on tensors as a subgroup of $GL(2n+\eps)$, hence
preserving the operations of index permutation and contraction. It also
preserves $h_{ab}$ and $g_{ab}$, hence also index raising and lowering. Since
these are the only operations and tensors used in formula~\eqref{eq:OperatorIC}, the operator $\mathcal{I}$ itself intertwines the action of the stabilizer group.
\end{proof}
\end{theorem}

Equipped with this knowledge, we split the space of four-tensors into subspaces of different weights as we described in section \ref{sec:RiemannWeightSubspaces}. The theorem \ref{the:ICZeroWeightTheorem} says that $\mathcal{I}$ is block-diagonal in a suitable basis. Tensors from different (weight) subspaces are not mixed by the action of $\mathcal{I}$. This remarkable property is a consequence of presence of the symmetry group $U(1)^n \rtimes S_n$.

To simplify the analysis of $\mathcal{I}$, we decompose~\cite{Lie} its codomain into irreducible subspaces of four-tensors labeled by Young diagrams, see section \ref{sec:YoungDiagrams}:
\begin{equation} \label{eq:CodomainICParts}
{\yngbig(1,1)\otimes\yngbig(1,1) \cong \yngbig(1,1,1,1) \oplus \yngbig(2,1,1) \oplus \yngbig(2,2)}.
\end{equation}
With respect to this decomposition we define corresponding parts of $\mathcal{I}$ 
\begin{align}
\mathcal{I}^{(1111)}{}_{abcd}{}^{ijkl} &\equiv \mathcal{I}_{[abcd]}{}^{ijkl}, \nonumber \\ 
\mathcal{I}^{(211)}{}_{abcd}{}^{ijkl} &\equiv P_{\,\young(ac,b,d)}\,\big(\mathcal{I}_{abcd}{}^{ijkl}\big), \label{eq:OperatorICParts}\\
\mathcal{I}^{(22)}{}_{abcd}{}^{ijkl} &\equiv P_{\,\young(ac,bd)}\,\big(\mathcal{I}{}_{abcd}{}^{ijkl}\big). \nonumber
\end{align}

\begin{lemma} \label{lem:IC1111}
As defined by \eqref{eq:OperatorICParts},
the fully antisymmetric part $\mathcal{I}^{(1111)} \equiv 0$.
\end{lemma}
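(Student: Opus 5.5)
Since $\mathcal{I}^{(1111)}{}_{abcd}{}^{ijkl} R_{ijkl} = \mathcal{I}_{[abcd]}{}^{ijkl} R_{ijkl}$ and $\mathcal{I}$ has already been projected onto Riemann-symmetric inputs by $P_{\young(ik,jl)}$, it suffices to show that for an arbitrary Riemann-symmetric tensor $r_{ijkl}$ the expression \eqref{eq:IntegrabilityConditionCKYE}, with $R$ replaced by $r$, vanishes after total antisymmetrization in $[abcd]$. The three terms of \eqref{eq:IntegrabilityConditionCKYE} will be handled separately.

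The second and third terms both carry an explicit Kronecker delta $\delta^{[a}_{[c}$. After lowering $a,b$ with the metric, each term contains a factor $g_{ac}$, $g_{ad}$, $g_{bc}$ or $g_{bd}$. Full antisymmetrization over $[abcd]$ kills any term containing a symmetric factor $g$ in two of these slots. This disposes of the terms $\delta^{[a}_{[c} r^{b]}{}_{d]}{}^{ef}h_{ef}$ and $r^{[a}{}_e\delta^{b]}_{[c}h^e{}_{d]}$ immediately, with no use of the properties of $h$ or $r$.

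The remaining term is $(D-2)\, r_{abe[c}h^e{}_{d]}$, whose total antisymmetrization is proportional to $r_{[ab|e|c}h^e{}_{d]}$. We write it as $h^{e}{}_{[d}\, r_{ab]c]e}$-type contractions and use the algebraic Bianchi identity: antisymmetrizing $r_{abce}$ over $a,b,c$ with $e$ fixed gives $r_{e[abc]}=0$. Since $r_{abec}=r_{ecab}$, we get $r_{[ab|e|c]} = r_{e[cab]} = 0$. Hence the full antisymmetrization over $a,b,c$ already annihilates this term, before $d$ is even included.

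The only step needing care is this last index bookkeeping: checking that the free indices $a,b,c$ of $r$ are exactly three slots of one Bianchi triple, and confirming that the outer projector $P_{\young(ik,jl)}$ in \eqref{eq:OperatorIC} only ensures Riemann symmetry and does not interfere. As a sanity check, one could verify in the weight decomposition \eqref{eq:DecompositionOfRtoWeightSectors}, or via \texttt{xAct}, that the image is zero for a low $n$. Note that the argument never uses the special form of $h$, so the lemma holds for any 2-form $h$.
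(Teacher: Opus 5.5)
Your proof is correct, and it takes a different route from the paper. The paper's proof is just a direct computer-algebra verification in \texttt{xAct}; you give a short conceptual argument instead.

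\textbf{Your argument.} The two terms carrying an explicit $\delta^{[a}_{[c}$ contain a $g_{bc}$-type factor in the free lower indices, so total antisymmetrization kills them. The surviving term $r_{[ab|e|c}h^e{}_{d]}$ already vanishes under antisymmetrization over $a,b,c$, because $r_{abec}=r_{ecab}$ and $r_{e[abc]}=0$.

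\textbf{Closing the loose end about the projector.} The Young projector in \eqref{eq:OperatorIC} acts only on $ijkl$, so it commutes with antisymmetrization over $abcd$. Hence the two metric-containing brackets of \eqref{eq:OperatorIC} vanish as tensors before any contraction, whatever the projector does. The surviving bracket $\delta^i_{[a}\delta^j_{b]}h^k{}_{[c}\delta^l_{d]}$ is antisymmetric in $ij$. That is enough for the projector to drop out when contracted with a Riemann-symmetric $r$, whether it is read as acting on the bracket or as precomposition with the projector onto Riemann-symmetric tensors. Since $\mathcal{I}$ is by definition an operator on Riemann-symmetric tensors, testing it on an arbitrary such $r$ is exactly what the lemma requires.

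\textbf{Comparison.} Your route shows that the lemma is nothing but the first Bianchi identity. It therefore holds for any 2-form $h$, in any dimension and signature, with no use of the Darboux structure; nothing about $h$ enters. The machine check instead certifies the identity under the paper's exact projector and normalization conventions, but gives no explanation of why it holds.

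\textbf{Minor.} Drop the garbled intermediate rewriting just before the Bianchi step. The Bianchi argument that follows it carries the proof on its own.
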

\begin{proof}
Direct calculation in \texttt{xAct}~\cite{xAct}.
\end{proof}

The remaining parts are in general non-zero, however, the $(22)$ part vanishes on the totally zero weight tensors to which to Kerr-NUT-(A)dS Riemann tensor belong.

\begin{lemma} \label{lem:IC22OnZeroWeightRiemann}
Let $\mathcal{I}^{(22)}$ be defined by \eqref{eq:OperatorICParts} and $R_{abcd}$ be a Riemann-symmetric tensor of totally zero weight \eqref{eq:RiemannInBasis}. Then
\begin{align*}
\mathcal{I}^{(22)}{}_{abcd}{}^{ijkl} R_{ijkl} = 0
\end{align*}
\end{lemma}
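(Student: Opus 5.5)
By linearity of $\mathcal{I}^{(22)}$ in $R$, and by Theorem~\ref{the:ZeroWeightBasis}, it suffices to check that $\mathcal{I}^{(22)}$ annihilates each element of the $h$-basis \eqref{eq:hBasis}: $h_\mu\odot h_\mu$, $h_\mu\odot h_\nu$ ($\mu\neq\nu$), $Q_\mu\odot Q_\nu$ and, in odd dimensions, $Q_\mu\odot Q_\epsilon$. For each basis element $B$ we substitute $B$ into the three terms of \eqref{eq:IntegrabilityConditionCKYE}, with $h=\sum_\rho x_\rho h_\rho$ and $g = \sum_\rho Q_\rho + \eps\,\hat e^0\hat e^0 = Q_\epsilon$, and reduce everything using the contraction rules collected earlier.

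First we compute the Ricci-type traces $B^i{}_{aib}$ from Lemma~\ref{lem:TracesOfHsQs}. These are always combinations of $Q_\mu$ and $Q_\epsilon$. Next we compute $B^{ab}{}_{ef}h^{ef}$ from Lemma~\ref{lem:ProductsOfYSHsQs}, which is always a combination of the $h_\rho$. Finally we compute $B^{ab}{}_{ec}h^e{}_d$, also from Lemma~\ref{lem:ProductsOfYSHsQs}, with the sum over $\rho$ weighted by $x_\rho$.

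After this reduction, every term of $\mathcal{I}(B)_{abcd}$ becomes a sum of products of the form $S_{ab\,\text{or}\,ac}\,A_{\cdot\cdot}$. Here $S\in\{Q_\mu,Q_\epsilon\}$ is symmetric and $A\in\{h_\rho\}$ is antisymmetric, and their indices are distributed over the slots $a,b,c,d$. The second term of \eqref{eq:IntegrabilityConditionCKYE} gives $g\wedge h_\rho$-type products, and the Ricci term gives $Q\cdot h$ products, which become $h$'s again by Lemma~\ref{lem:ProductsOfHsQs}.

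The key structural point is that every such product is of the form ``symmetric $\otimes$ antisymmetric''. The $P_{\young(ac,bd)}$ projection of such a product vanishes identically, as stated after \eqref{eq:ODotProd2}: the YS product of a symmetric and an antisymmetric tensor is zero. This is the mechanism that should make the lemma work.

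More conceptually, the $(22)$ part of the codomain $\yng(1,1)\otimes\yng(1,1)$ is symmetric under exchange of the pairs $[ab]\leftrightarrow[cd]$. The image, however, is built from an odd number of $h_\mu$'s. Since $\mathcal{I}$ is linear in $h$ and the totally zero weight $R$ contains an even number of $h_\mu$'s, a parity argument analogous to the proof of Theorem~\ref{th:NoTwoFormFromRiemann} shows that $\mathcal{I}(R)$ is a combination of terms each containing exactly one $h_\rho$ together with $Q$'s and $g$'s.

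We then want to show that any $(22)$-symmetric tensor built from the $h_\rho$, $Q_\mu$, $Q_\epsilon$ with an odd number of $h$'s (here exactly one) vanishes. The plan is to write such a tensor as a combination of $P_{(22)}(h_\rho\otimes S)$ with $S$ symmetric, which vanishes by the remark after \eqref{eq:ODotProd2}, and of $P_{(22)}(h_\rho\otimes h_\sigma)$-type terms, which cannot occur because of the parity count. A possible alternative is a weight argument: under the reflection $e^{+\mu}\leftrightarrow e^{-\mu}$ (complex conjugation of the frame), $h_\rho\mapsto -h_\rho$ and $Q_\mu\mapsto Q_\mu$, so an odd tensor cannot lie in a space spanned by even basis elements of type $(22)$ with totally zero weight. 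Lemma~\ref{the:ZeroWeightBasis} would then force it to vanish.

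The main obstacle is making this last step rigorous. One must check that the $(22)$ projection of each one-$h$ term really has the form ``sym $\otimes$ antisym'' and is not, for instance, $h_\rho\otimes h_\sigma$ hidden in a $Q$ contraction. The cleanest route is the reflection argument. $\mathcal{I}^{(22)}(R)$ is totally zero weight by Theorem~\ref{the:ICZeroWeightTheorem}, so it lies in $\mathfrak{R}_0$ by Theorem~\ref{the:ZeroWeightBasis}, and every element of $\mathcal{B}_D$ is even under the reflection. On the other hand, $\mathcal{I}$ is odd in $h$ and $R$ is even, so $\mathcal{I}^{(22)}(R)$ is odd under the reflection, and hence it vanishes. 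I would present this argument and, if needed, double-check it in \texttt{xAct} on the basis elements.
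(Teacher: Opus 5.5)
Your proof is correct, but the step you finally commit to is a genuinely different route from the paper's. The paper substitutes \eqref{eq:RiemannInBasis} into \eqref{eq:OperatorIC} and checks that the $(22)$ projection vanishes by a direct \texttt{xAct} computation using Lemmas~\ref{lem:ProductsOfHsQs}--\ref{lem:ProductsOfYSHsQs}.

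Your first route is that same computation, plus a structural reason for the result. After the contractions, every term is a symmetric $2$-tensor times an antisymmetric one in some slot arrangement. For \emph{every} arrangement, such a term is killed by the $(22)$ symmetrizer, because $S^2V\otimes\Lambda^2V\cong(31)\oplus(211)$ has no $(22)$ component. This is slightly more than the remark after \eqref{eq:ODotProd2}, which covers only one arrangement.

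Your reflection argument removes the computation altogether. State $\sigma$ as the real orthogonal map $e^\mu\mapsto e^\mu$, $\hat{e}^\mu\mapsto-\hat{e}^\mu$, $\hat{e}^0\mapsto\hat{e}^0$, i.e.\ the \emph{linear} swap $e^{+\mu}\leftrightarrow e^{-\mu}$. Do not call it complex conjugation: antilinear conjugation fixes the real tensor $h_\rho$. Since $\sigma\in O(D)$, it preserves $g$ and commutes with contractions and Young projectors, while $\sigma h=-h$. Linearity of $\mathcal{I}$ in $h$ together with $\sigma R=R$ then gives $\sigma X=-X$ for $X=\mathcal{I}^{(22)}(R)$. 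On the other hand, $X$ is Riemann-symmetric, being in the image of the $(22)$ symmetrizer, and it is $U(1)^n$-invariant by Theorem~\ref{the:ICZeroWeightTheorem}. So $X\in\mathfrak{R}_0$ by Theorem~\ref{the:ZeroWeightBasis}, where $\sigma$ acts trivially, and hence $X=0$.

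What your route buys is an explanation rather than a verification:
\begin{itemize}
\item it shows the lemma holds for any operator built covariantly from $g$ and an odd number of copies of $h$;
\item it reproves Theorem~\ref{th:NoTwoFormFromRiemann} the same way, since totally zero weight $2$-forms are spanned by the $\sigma$-odd $h_\mu$;
\item it correctly does not extend to the $(211)$ part, whose totally zero weight tensors such as $Q_\mu\star h_\nu$ are $\sigma$-odd.
\end{itemize}
The cost is an additional appeal to the spanning statement of Theorem~\ref{the:ZeroWeightBasis} for the output tensor $X$. The paper's computation uses that theorem only for the input, via \eqref{eq:RiemannInBasis}.
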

\begin{proof}
By direct calculation in \texttt{xAct}~\cite{xAct}. We use the defining formula for totally zero weight Riemann-symmetric tensors \eqref{eq:RiemannInBasis} and the formulas for contractions of $h_\mu$s and $Q_\mu$s listed in lemmas~\ref{lem:ProductsOfHsQs}, \ref{lem:TracesOfHsQs}, and \ref{lem:ProductsOfYSHsQs}.
\end{proof}

The integrability condition for a \emph{general} Riemann-symmetric tensor is now split into two separate equations corresponding to the non-vanishing parts of $\mathcal{I}$
\begin{align} 
\mathcal{I}^{(22)}{}_{abcd}{}^{ijkl} R_{ijkl} &= 0, \label{eq:OICR22}\\
\mathcal{I}^{(211)}{}_{abcd}{}^{ijkl} R_{ijkl} &= 0. \label{eq:OICR211}
\end{align}

Direct calculation is too complicated, we have to use more sophisticated methods.
In the next section we use the decomposition of Riemann-symmetric tensors that we obtained in lemma~\ref{lem:Parametrization}.
Since that decomposition respects the stabilizer symmetry group, we can
take advantage of theorem \ref{the:ICZeroWeightTheorem}, which allows us to investigate the sectors of different weights separately, and their sub-diagonals sparsely decoupled.

To achieve these simplifications, we need to apply a similar decomposition to the codomains of $\mathcal{I}^{(22)}$ and $\mathcal{I}^{(211)}$. For the former, we can reuse lemma~\ref{lem:Parametrization}, while for the latter, we need to define an analogous decomposition. For our purposes, it is sufficient to define the maps

\begin{align} \label{eq:PL-map}
	\mathfrak{P}^L\colon \bigoplus_{(\lambda)} \mathfrak{T}_{(\lambda)} = \mathfrak{T}^L &\rightarrow \mathfrak{L} , &
	\mathfrak{P}^L\colon \big(\TT^{(\lambda)}\big) \mapsto L &= \sum_{(\lambda)} \TT^{(\lambda)} \tilde{E}^{(\lambda)} ,
	\\ \label{eq:PLbar-map}
	\bar{\mathfrak{P}}^L\colon \mathfrak{L} &\rightarrow \mathfrak{T}^L , &
	\bar{\mathfrak{P}}^L\colon L \mapsto \big(\TT^{(\lambda)}\big) &= \bigoplus_{(\lambda)} \tilde{P}^{(\lambda)} L ,
\end{align}
where $\mathfrak{L}$ is the space of spacetime tensors  of $(211)$ Young type and the summation over $(\lambda)$ ranges over the triples of Young diagrams appearing in the decomposition~\eqref{eq:Res211}. The corresponding projections, defined analogously to those in \eqref{eq:ICProjectors22}, are given by the following explicit formulas: first without $\hat{e}^0$s
{\small
\begin{equation} \label{eq:ICProjectors211}
\begin{aligned}
(\tilde{P}^{(211,0,0)})^{\mu\nu\rho\sigma}_{abcd} &= P_{\,\young(ac,b,d)} \, P_{\,\young(\mu\rho,\nu,\sigma)}\, E_{abcd}^{{\overset{+}{\mu}}{\overset{+}{\nu}}{\overset{+}{\rho}}{\overset{+}{\sigma}}} , &
(\tilde{P}^{(0,211,0)})^{\mu\nu\rho\sigma}_{abcd} &= P_{\,\young(ac,b,d)} \, P_{\,\young(\mu\rho,\nu,\sigma)}\, E_{abcd}^{{\overset{-}{\mu}}{\overset{-}{\nu}}{\overset{-}{\rho}}{\overset{-}{\sigma}}} ,  \\
(\tilde{P}^{(111,1,0)})^{\mu\nu\rho\sigma}_{abcd} &= P_{\,\young(ac,b,d)} \, P_{\,\young(\mu,\nu,\sigma)}\, E_{abcd}^{{\overset{+}{\mu}}{\overset{+}{\nu}}{\overset{-}{\rho}}{\overset{+}{\sigma}}} , &
(\tilde{P}^{(1,111,0)})^{\mu\nu\rho\sigma}_{abcd} &= P_{\,\young(ac,b,d)} \, P_{\,\young(\mu,\nu,\sigma)}\, E_{abcd}^{{\overset{-}{\mu}}{\overset{-}{\nu}}{\overset{+}{\rho}}{\overset{-}{\sigma}}} ,  \\
(\tilde{P}^{(11,11,0)})^{\mu\nu\rho\sigma}_{abcd} &= P_{\,\young(ac,b,d)} \, P_{\,\young(\rho,\sigma)} \, P_{\,\young(\mu,\nu)} \, E_{abcd}^{{\overset{+}{\mu}}{\overset{+}{\nu}}{\overset{-}{\rho}}{\overset{-}{\sigma}}} , & &
\\
(\tilde{P}^{(11,2,0)})^{\mu\nu\rho\sigma}_{abcd} &= P_{\,\young(ac,b,d)} \, P_{\,\young(\nu,\sigma)} \ P_{\,\young(\mu\rho)} \, E_{abcd}^{{\overset{-}{\mu}}{\overset{+}{\nu}}{\overset{-}{\rho}}{\overset{+}{\sigma}}} , &
(\tilde{P}^{(2,11,0)})^{\mu\nu\rho\sigma}_{abcd} &= P_{\,\young(ac,b,d)} \, P_{\,\young(\nu,\sigma)} \ P_{\,\young(\mu\rho)} \, E_{abcd}^{{\overset{+}{\mu}}{\overset{-}{\nu}}{\overset{+}{\rho}}{\overset{-}{\sigma}}} ,  \\
(\tilde{P}^{(21,1,0)})^{\mu\nu\rho\sigma}_{abcd} &= P_{\,\young(ac,b,d)} \, P_{\,\young(\mu\rho,\nu)}\, E_{abcd}^{{\overset{+}{\mu}}{\overset{+}{\nu}}{\overset{+}{\rho}}{\overset{-}{\sigma}}} , &
(\tilde{P}^{(1,21,0)})^{\mu\nu\rho\sigma}_{abcd} &= P_{\,\young(ac,b,d)} \, P_{\,\young(\mu\rho,\sigma)}\, E_{abcd}^{{\overset{-}{\mu}}{\overset{+}{\nu}}{\overset{-}{\rho}}{\overset{-}{\sigma}}} , 
\end{aligned}
\end{equation}
}
then with one $\hat{e}^0$
\begin{equation} \label{eq:ICProjectors211E0}
\begin{aligned} 
(\tilde{P}^{(21,0,1)})^{\mu\nu\rho}_{abcd} &= P_{\,\young(ac,b,d)} \, P_{\,\young(\mu\rho,\nu)}\, E_{abcd}^{{\overset{+}{\mu}}{\overset{+}{\nu}}{\overset{+}{\rho}}{0}} , &
(\tilde{P}^{(0,21,1)})^{\mu\nu\rho}_{abcd} &= P_{\,\young(ac,b,d)} \, P_{\,\young(\mu\rho,\nu)}\, E_{abcd}^{{\overset{-}{\mu}}{\overset{-}{\nu}}{\overset{-}{\rho}}{0}} ,  \\
(\tilde{P}^{(2,1,1)})^{\mu\nu\rho}_{abcd} &= P_{\,\young(ac,b,d)} \, P_{\,\young(\mu\rho)}\, E_{abcd}^{{\overset{+}{\mu}}{\overset{-}{\nu}}{\overset{+}{\rho}}{0}} , &
(\tilde{P}^{(1,2,1)})^{\mu\nu\rho}_{abcd} &= P_{\,\young(ac,b,d)} \, P_{\,\young(\mu\rho)}\, E_{abcd}^{{\overset{-}{\mu}}{\overset{+}{\nu}}{\overset{-}{\rho}}{0}} ,  \\
(\tilde{P}^{(11,1,1)(1)})^{\mu\nu\rho}_{abcd} &= P_{\,\young(ac,b,d)} \, P_{\,\young(\mu,\nu)}\, E_{abcd}^{{\overset{+}{\mu}}{\overset{+}{\nu}}{\overset{-}{\rho}}{0}} , &
(\tilde{P}^{(11,1,1)(2)})^{\mu\nu\rho}_{abcd} &= P_{\,\young(ac,b,d)} \, P_{\,\young(\mu,\nu)}\, E_{abcd}^{{\overset{+}{\mu}}{\overset{+}{\nu}}{0}{\overset{-}{\rho}}} ,
\\
(\tilde{P}^{(1,11,1)(1)})^{\mu\nu\rho}_{abcd} &= P_{\,\young(ac,b,d)} \, P_{\,\young(\mu,\nu)}\, E_{abcd}^{{\overset{-}{\mu}}{\overset{-}{\nu}}{\overset{+}{\rho}}{0}} , &
(\tilde{P}^{(1,11,1)(2)})^{\mu\nu\rho}_{abcd} &= P_{\,\young(ac,b,d)} \, P_{\,\young(\mu,\nu)}\, E_{abcd}^{{\overset{-}{\mu}}{\overset{-}{\nu}}{0}{\overset{+}{\rho}}} ,  \\
(\tilde{P}^{(111,0,1)(1)})^{\mu\nu\rho}_{abcd} &= P_{\,\young(ac,b,d)} \, P_{\,\young(\mu,\nu,\rho)}\, E_{abcd}^{{\overset{+}{\mu}}{\overset{+}{\nu}}{0}{\overset{+}{\rho}}} , &
(\tilde{P}^{(0,111,1)(2)})^{\mu\nu\rho}_{abcd} &= P_{\,\young(ac,b,d)} \, P_{\,\young(\mu,\nu,\rho)}\, E_{abcd}^{{\overset{-}{\mu}}{\overset{-}{\nu}}{0}{\overset{-}{\rho}}} , 
\end{aligned}
\end{equation}
and with two $\hat{e}^0$s
\begin{equation} \label{eq:ICProjectors211E0E0}
\begin{aligned}
(\tilde{P}^{(11,0,2)})^{\mu\nu}_{abcd} &= P_{\,\young(ac,b,d)} \, P_{\,\young(\mu,\nu)}\, E_{abcd}^{{\overset{+}{\mu}}{\overset{+}{\nu}}{0}{0}} , \qquad & 
(\tilde{P}^{(0,11,2)})^{\mu\nu}_{abcd} &= P_{\,\young(ac,b,d)} \, P_{\,\young(\mu,\nu)}\, E_{abcd}^{{\overset{-}{\mu}}{\overset{-}{\nu}}{0}{0}} , \\
(\tilde{P}^{(1,1,2)})^{\mu\nu}_{abcd} &= P_{\,\young(ac,b,d)} \, E_{abcd}^{{\overset{+}{\mu}}{\overset{-}{\nu}}{0}{0}} . & &
\end{aligned}
\end{equation}
As before, because the matrix of scalar products of $e^{\pm}$ is off-diagonal (lemma~\ref{lem:ProductsOfEpm}), to every projection the corresponding embedding is defined with $+$ and $-$ exchanged, for instance 
\[
(\tilde{E}^{(11,2,0)})^{\mu\nu\rho\sigma}_{abcd} = P_{\,\young(ac,b,d)} \, P_{\,\young(\nu,\sigma)} \ P_{\,\young(\mu\rho)} \, E_{abcd}^{{\overset{+}{\mu}}{\overset{-}{\nu}}{\overset{+}{\rho}}{\overset{-}{\sigma}}} .
\]

\subsection{Calculation of the projections} \label{sec:CalculationOfProjectionsIC}

After we substitute \eqref{eq:DecompositionOfRtoWeightSectors} into \eqref{eq:OICR22}, reps.\ \eqref{eq:OICR211}, we project the result with all possible projectors \eqref{eq:ICProjectors22} resp.\ \eqref{eq:ICProjectors211}--\eqref{eq:ICProjectors211E0E0}. This corresponds to computing the block components of the compositions $\bar{\mathfrak{P}}^R \circ \mathcal{I}^{(22)} \circ \mathfrak{P}^R$ and $\bar{\mathfrak{P}}^L \circ \mathcal{I}^{(211)} \circ \mathfrak{P}^R$. The results typically contain a number of terms (on the order of 10), which can be simplified by grouping them and finding a representative term, which generates the whole group after the corresponding symmetrizer (used to define the projector) is applied. We state here such simplified equations. They are sorted according to the global weight of the coefficients in descending order. Notice that coefficients with different weights do not mix in the equations, it is a consequence of theorem \ref{the:ICZeroWeightTheorem}.  

\allowdisplaybreaks

The projected equation \eqref{eq:OICR22} by projectors \eqref{eq:ICProjectors22} (without $e^0$): 
{\footnotesize
\begin{align}
0 &= P_{\,\young(\mu\rho,\nu\sigma)} \big[ x_\mu \TT^{(22,0,0)}_{\mu\nu\rho\sigma} \big], \label{eq:ICProjections22-1} \\
0 &= P_{\,\young(\mu\rho,\nu)} \big[ (D-2)(3x_\mu - x_\nu) \TT^{(21,1,0)}_{\mu\nu\rho\sigma} + 3x_\mu \delta_{\nu\sigma}\sum_\alpha(\TT^{(21,1,0)}_{\mu\rho\alpha\alpha} + 2\TT^{(21,1,0)}_{\alpha\mu\rho\alpha}) - 2x_\mu \delta_{\nu\sigma} \TT^{(2,0,2)}_{\mu\rho} \big], \label{eq:ICProjections22-2} \\
0 &=  P_{\,\young(\mu,\rho)}  P_{\,\young(\nu,\sigma)} \big[ (x_\mu - x_\sigma)\big(-4(D-2)\TT^{(2,2,0)}_{\mu\nu\rho\sigma} +  2\delta_{\nu\rho}\sum_\alpha(2\TT^{(2,2,0)}_{\mu\sigma\alpha\alpha} + 3\TT^{(11,11,0)}_{\mu\alpha\alpha\sigma}) - \delta_{\nu\rho} \TT^{(1,1,2)}_{\mu\sigma} \big)\big], \label{eq:ICProjections22-3} \\
0 &= P_{\,\young(\mu\nu)}  P_{\,\young(\rho\sigma)} \big[ (x_\mu - x_\sigma)\big(-2(D-2)\TT^{(11,11,0)}_{\mu\nu\rho\sigma}  + 2\delta_{\nu\rho}\sum_\alpha(2\TT^{(2,2,0)}_{\mu\sigma\alpha\alpha} + 3\TT^{(11,11,0)}_{\mu\alpha\alpha\sigma}) - \delta_{\nu\rho} \TT^{(1,1,2)}_{\mu\sigma} \big)\big], \label{eq:ICProjections22-4} \\
0 &= P_{\,\young(\mu\rho,\sigma)} \big[ (D-2)(3x_\mu - x_\nu) \TT^{(1,21,0)}_{\mu\nu\rho\sigma} + 3x_\mu \delta_{\nu\sigma}\sum_\alpha(\TT^{(1,21,0)}_{\mu\alpha\alpha\rho} + 2\TT^{(1,21,0)}_{\alpha\alpha\rho\mu}) - 2x_\mu \delta_{\nu\sigma} \TT^{(0,2,2)}_{\mu\rho} \big], \label{eq:ICProjections22-5} \\
0 &= P_{\,\young(\mu\rho,\nu\sigma)} \big[ x_\mu \TT^{(0,22,0)}_{\mu\nu\rho\sigma} \big]. \label{eq:ICProjections22-6}
\end{align}
}
The projected equation \eqref{eq:OICR22} by projectors \eqref{eq:ICProjectors22} (with one $e^0$):
{\footnotesize
\begin{align}
0 &= P_{\,\young(\mu\rho,\nu)} \big[ x_\mu \TT^{(21,0,1)}_{\mu\nu\rho} \big], \label{eq:ICProjections22E0-1} \\
0 &= P_{\,\young(\mu\rho)} \big[ (D-2)(x_\nu - 2x_\rho)\TT^{(2,1,1)}_{\mu\nu\rho} +  \delta_{\mu\nu}x_\rho\sum_\alpha(\TT^{(2,1,1)}_{\alpha\alpha\rho} + \tfrac{3}{2}\TT^{(11,1,1)}_{\rho\alpha\alpha}) \big], \label{eq:ICProjections22E0-2} \\
0 &= P_{\,\young(\mu,\nu)} \big[ (D-2)(x_\rho - 2x_\nu)\TT^{(11,1,1)}_{\mu\nu\rho} -  \delta_{\mu\rho}x_\nu\sum_\alpha(3\TT^{(11,1,1)}_{\nu\alpha\alpha} + 2\TT^{(2,1,1)}_{\alpha\alpha\nu}) \big], \label{eq:ICProjections22E0-3} \\
0 &= P_{\,\young(\mu\rho)} \big[ (D-2)(x_\nu - 2x_\rho)\TT^{(1,2,1)}_{\mu\nu\rho} +  \delta_{\mu\nu}x_\rho\sum_\alpha(\TT^{(1,2,1)}_{\alpha\alpha\rho} + \tfrac{3}{2}\TT^{(1,11,1)}_{\rho\alpha\alpha}) \big], \label{eq:ICProjections22E0-4} \\
0 &= P_{\,\young(\mu,\nu)} \big[ (D-2)(x_\rho - 2x_\nu)\TT^{(1,11,1)}_{\mu\nu\rho} -  \delta_{\mu\rho}x_\nu\sum_\alpha(3\TT^{(1,11,1)}_{\nu\alpha\alpha} + 2\TT^{(1,2,1)}_{\alpha\alpha\nu}) \big], \label{eq:ICProjections22E0-5} \\
0 &= P_{\,\young(\mu\rho,\nu)} \big[ x_\mu \TT^{(0,21,1)}_{\mu\nu\rho} \big]. \label{eq:ICProjections22E0-6}
\end{align}
}
The projected equation \eqref{eq:OICR22} by projectors \eqref{eq:ICProjectors22} (with two $e^0$)s:
{\footnotesize
\begin{align}
0 &= P_{\,\young(\mu\nu)} \big[ x_\mu\big(2(D-3)\TT^{(2,0,2)}_{\mu\nu} + 3\sum_\alpha(\TT^{(21,1,0)}_{\alpha\mu\nu\alpha} + \TT^{(21,1,0)}_{\alpha\nu\mu\alpha})\big)\big], \label{eq:ICProjections22E0E0-1} \\
0 &= (x_\mu - x_\nu)\big((D-3)\TT^{(1,1,2)}_{\mu\nu} + 2\sum_\alpha(3\TT^{(11,11,0)}_{\mu\alpha\alpha\nu} + 2\TT^{(2,2,0)}_{\mu\nu\alpha\alpha})\big), \label{eq:ICProjections22E0E0-2} \\
0 &= P_{\,\young(\mu\nu)} \big[ x_\mu\big(2(D-3)\TT^{(0,2,2)}_{\mu\nu} + 3\sum_\alpha(\TT^{(1,21,0)}_{\alpha\alpha\mu\nu} + \TT^{(1,21,0)}_{\alpha\alpha\nu\mu})\big)\big]. \label{eq:ICProjections22E0E0-3} 
\end{align}
}

The projected equation \eqref{eq:OICR211} by projectors \eqref{eq:ICProjectors211} (without $e^0$):
{\footnotesize
\begin{align}
0 &= P_{\,\young(\mu\rho,\nu,\sigma)} \big[ x_\mu \TT^{(22,0,0)}_{\mu\nu\rho\sigma} \big], \label{eq:ICProjections211-1} \\
0 &= P_{\,\young(\mu\rho,\nu)} \big[ -\delta_{\mu\sigma} \sum_{\alpha}\big( 2x_\alpha \TT^{(21,1,0)}_{\rho\nu\alpha\alpha} + (x_\nu - x_\rho)(\TT^{(21,1,0)}_{\alpha\rho\nu\alpha}+ \TT^{(21,1,0)}_{\alpha\nu\rho\alpha}) \big) \nonumber \\
& \qquad \qquad \qquad + (D-2)(3x_\mu - 2x_\rho + x_\sigma)\TT^{(21,1,0)}_{\mu\nu\rho\sigma} + \tfrac{2}{3}\delta_{\mu\sigma}(x_\nu - x_\rho) \TT^{(2,0,2)}_{\nu\rho} \big], \label{eq:ICProjections211-2} \\
0 &= P_{\,\young(\mu,\nu,\rho)} \big[ (D-2)x_\mu \TT^{(21,1,0)}_{\nu\mu\sigma\rho} + \delta_{\nu\rho}\sum_\alpha(x_\alpha \TT^{(21,1,0)}_{\mu\sigma\alpha\alpha} - x_\mu( \TT^{(21,1,0)}_{\alpha\sigma\mu\alpha} + \TT^{(21,1,0)}_{\alpha\mu\sigma\alpha})) \nonumber \\
& \qquad \qquad \qquad + \tfrac{2}{3}\delta_{\nu\rho} x_\mu \TT^{(2,0,2)}_{\mu\sigma} \big], \label{eq:ICProjections211-3} \\
0 &= P_{\,\young(\mu,\nu)}P_{\,\young(\rho,\sigma)} \big[ 2(D-2)(x_\mu + x_\sigma)\TT^{(11,11,0)}_{\mu\nu\rho\sigma} - 2\delta_{\nu\rho}\sum_\alpha(2x_\alpha + 3x_\mu + 3x_\sigma)\TT^{(11,11,0)}_{\mu\alpha\alpha\sigma} \nonumber \\
& \qquad \qquad \qquad + 4\delta_{\nu\rho}\sum_\alpha (2x_\alpha - x_\mu - x_\sigma) \TT^{(2,2,0)}_{\mu\sigma\alpha\alpha} + \delta_{\nu\rho}(x_\mu + x_\sigma) \TT^{(1,1,2)}_{\mu\sigma} \big], \label{eq:ICProjections211-4} \\
0 &= P_{\,\young(\nu,\sigma)}P_{\,\young(\mu\rho)} \big[ 2(D-2)x_\mu \TT^{(11,11,0)}_{\nu\sigma\rho\mu} + 2\delta_{\nu\rho}\sum_\alpha(2x_\alpha + 3x_\mu + 3x_\sigma)\TT^{(11,11,0)}_{\sigma\alpha\alpha\mu} - 4(D-2)x_\sigma \TT^{(2,2,0)}_{\nu\rho\sigma\mu} \nonumber \\
& \qquad \qquad \qquad - 4\delta_{\nu\rho}\sum_\alpha (2x_\alpha - x_\mu - x_\sigma) \TT^{(2,2,0)}_{\sigma\mu\alpha\alpha} - \delta_{\nu\rho}(x_\mu + x_\sigma) \TT^{(1,1,2)}_{\sigma\mu}\big], \label{eq:ICProjections211-5} \\
0 &=  P_{\,\young(\nu,\sigma)}P_{\,\young(\mu\rho)} \big[ 2(D-2)x_\mu \TT^{(11,11,0)}_{\rho\mu\nu\sigma} + 2\delta_{\nu\rho}\sum_\alpha(2x_\alpha + 3x_\mu + 3x_\sigma)\TT^{(11,11,0)}_{\mu\alpha\alpha\sigma} - 4(D-2)x_\sigma \TT^{(2,2,0)}_{\mu\nu\rho\sigma} \nonumber \\
& \qquad \qquad \qquad - 4\delta_{\nu\rho}\sum_\alpha (2x_\alpha - x_\mu - x_\sigma) \TT^{(2,2,0)}_{\mu\sigma\alpha\alpha} - \delta_{\nu\rho}(x_\mu + x_\sigma) \TT^{(1,1,2)}_{\mu\sigma}\big], \label{eq:ICProjections211-6} \\
0 &= P_{\,\young(\mu,\nu,\rho)} \big[ (D-2)x_\mu \TT^{(1,21,0)}_{\mu\rho\sigma\nu} - \delta_{\nu\rho}\sum_\alpha(x_\alpha \TT^{(1,21,0)}_{\mu\alpha\alpha\sigma} - x_\mu( \TT^{(1,21,0)}_{\alpha\alpha\sigma\mu} + \TT^{(1,21,0)}_{\alpha\alpha\mu\sigma})) \nonumber \\
& \qquad \qquad \qquad - \tfrac{2}{3}\delta_{\nu\rho} x_\mu \TT^{(2,0,2)}_{\mu\sigma} \big], \label{eq:ICProjections211-7} \\
0 &= P_{\,\young(\mu\rho,\sigma)} \big[ -\delta_{\mu\nu} \sum_{\alpha}\big( 2x_\alpha \TT^{(1,21,0)}_{\rho\alpha\alpha\sigma} + (x_\sigma - x_\rho)(\TT^{(1,21,0)}_{\alpha\alpha\sigma\rho} + \TT^{(1,21,0)}_{\alpha\alpha\rho\sigma}) \big) \nonumber \\
& \qquad \qquad \qquad + (D-2)(3x_\mu - 2x_\rho + x_\nu)\TT^{(1,21,0)}_{\mu\nu\rho\sigma} + \tfrac{2}{3}\delta_{\mu\nu}(x_\sigma - x_\rho) \TT^{(0,2,2)}_{\sigma\rho} \big], \label{eq:ICProjections211-8} \\
0 &= P_{\,\young(\mu\rho,\nu,\sigma)} \big[ x_\mu \TT^{(0,22,0)}_{\mu\nu\rho\sigma} \big]. \label{eq:ICProjections211-9}
\end{align}
}
The projected equation \eqref{eq:OICR211} by projectors \eqref{eq:ICProjectors211E0} (with one $e^0$):
{\footnotesize
\begin{align}
0 &= P_{\,\young(\mu\rho,\nu)} \big[ (3x_\nu - x_\rho) \TT^{(21,0,1)}_{\mu\nu\rho} \big], \label{eq:ICProjections211E0-1} \\
0 &= P_{\,\young(\mu,\nu,\rho)} \big[ x_\mu \TT^{(21,0,1)}_{\mu\nu\rho} \big], \label{eq:ICProjections211E0-2} \\
0 &= P_{\,\young(\mu\rho)}  \big[ (D-2)x_\mu \TT^{(11,1,1)}_{\rho\mu\nu} + \tfrac{1}{2} \delta_{\mu\nu}\sum_\alpha(2x_\alpha + 3x_\rho)\TT^{(11,1,1)}_{\rho\alpha\alpha} + (D-2)x_\nu \TT^{(2,1,1)}_{\mu\nu\rho}  \nonumber \\
& \qquad \qquad \qquad - \delta_{\mu\nu}\sum_\alpha(2x_\alpha - x_\rho)\TT^{(2,1,1)}_{\rho\alpha\alpha} \big], \label{eq:ICProjections211E0-3} \\
0 &= P_{\,\young(\mu,\nu)} \big[ (D-2)x_\mu \TT^{(11,1,1)}_{\mu\nu\rho} + \delta_{\mu\rho}\sum_\alpha(2x_\alpha + 3x_\nu)\TT^{(11,1,1)}_{\nu\alpha\alpha} - 2(D-2)x_\nu \TT^{(2,1,1)}_{\mu\rho\nu} \nonumber \\
& \qquad \qquad \qquad - 2\delta_{\mu\rho}\sum_\alpha(2x_\alpha - x_\nu)\TT^{(2,1,1)}_{\nu\alpha\alpha} \big], \label{eq:ICProjections211E0-4} \\
0 &= P_{\,\young(\mu,\nu)} \big[ (D-2)(x_\mu + x_\rho) \TT^{(11,1,1)}_{\mu\nu\rho} + 2(D-2)x_\nu \TT^{(2,1,1)}_{\mu\rho\nu} \big], \label{eq:ICProjections211E0-5} \\
0 &= P_{\,\young(\mu\rho)} \big[ (D-2)x_\mu \TT^{(1,11,1)}_{\rho\mu\nu} + \tfrac{1}{2} \delta_{\mu\nu}\sum_\alpha(2x_\alpha + 3x_\rho)\TT^{(1,11,1)}_{\rho\alpha\alpha} + (D-2)x_\nu \TT^{(1,2,1)}_{\mu\nu\rho} \nonumber \\
& \qquad \qquad \qquad - \delta_{\mu\nu}\sum_\alpha(2x_\alpha - x_\rho)\TT^{(1,2,1)}_{\rho\alpha\alpha} \big], \label{eq:ICProjections211E0-6} \\
0 &= P_{\,\young(\mu,\nu)} \big[ (D-2)x_\mu \TT^{(1,11,1)}_{\mu\nu\rho} + \delta_{\mu\rho}\sum_\alpha(2x_\alpha + 3x_\nu)\TT^{(1,11,1)}_{\nu\alpha\alpha} - 2(D-2)x_\nu \TT^{(1,2,1)}_{\mu\rho\nu} \nonumber \\
& \qquad \qquad \qquad - 2\delta_{\mu\rho}\sum_\alpha(2x_\alpha - x_\nu)\TT^{(1,2,1)}_{\nu\alpha\alpha} \big], \label{eq:ICProjections211E0-7} \\
0 &= P_{\,\young(\mu,\nu)} \big[ (D-2)(x_\mu + x_\rho) \TT^{(1,11,1)}_{\mu\nu\rho} + 2(D-2)x_\nu \TT^{(1,2,1)}_{\mu\rho\nu} \big], \label{eq:ICProjections211E0-8}\\
0 &= P_{\,\young(\mu,\nu,\rho)} \big[ x_\mu \TT^{(0,21,1)}_{\mu\nu\rho} \big], \label{eq:ICProjections211E0-9} \\
0 &= P_{\,\young(\mu\rho,\nu)} \big[ (3x_\nu - x_\rho) \TT^{(0,21,1)}_{\mu\nu\rho} \big]. \label{eq:ICProjections211E0-10}
\end{align}
}
The projected equation \eqref{eq:OICR211} by projectors \eqref{eq:ICProjectors211E0E0} (with two $e^0$s):
{\footnotesize
\begin{align}
0 &= P_{\,\young(\mu,\nu)} \big[ 2(D-3)x_\mu \TT^{(2,0,2)}_{\mu\nu} + 3\sum_\alpha(2x_\alpha + x_\mu - x_\nu)\TT^{(21,1,0)}_{\alpha\mu\nu\alpha}\big], \label{eq:ICProjections211E0E0-1} \\
0 &=  (D-3)(x_\mu + x_\nu)\TT^{(1,1,2)}_{\mu\nu} - 4\sum_\alpha (2x_\alpha - x_\mu - x_\nu) \TT^{(2,2,0)}_{\mu\nu\alpha\alpha}  \nonumber \\
& \qquad \qquad \qquad + 2\sum_\alpha (2x_\alpha + 3x_\mu + 3x_\nu) \TT^{(11,11,0)}_{\mu\alpha\alpha\nu}, \label{eq:ICProjections211E0E0-2} \\
0 &= P_{\,\young(\mu,\nu)} \big[ 2(D-3)x_\mu \TT^{(2,0,2)}_{\mu\nu} - 3\sum_\alpha(2x_\alpha + x_\nu - x_\mu)\TT^{(1,21,0)}_{\alpha\alpha\mu\nu}\big]. \label{eq:ICProjections211E0E0-3}
\end{align}
}

\allowdisplaybreaks[0]

The equations \eqref{eq:ICProjections22-1}--\eqref{eq:ICProjections211E0E0-3} represent a system of linear equations for unknown tensor coefficients $\TT^{({\cdots})}$s. As we can see, the equations contain terms which contribute only to certain diagonal parts. Moreover, $\TT^{({\cdots})}$s have non-trivial symmetries inherited from the embeddings, which means that their own diagonals are not all independent, see section \ref{sec:Diagonals}.

To save space, we just describe a general way how to investigate these equations\footnote{A detailed calculation can be found in \cite[Sec.2.7.3]{MatejovThesis}.}. As we discussed in section \ref{sec:InvarProperties} the group $S_n$ acts on the frame Greek indices. To obtain independent conditions, one should decompose the coefficients $\TT^{({\cdots})}$ into parts belonging to irreducible subspaces with respect to $S_n$. However, this full decomposition is in practical calculations very tedious for tensors of higher rank than two with a huge number of irreducible subspaces. In the case of the system \eqref{eq:ICProjections22-1}--\eqref{eq:ICProjections211E0E0-3} it is sufficient to employ the partial decomposition into diagonals described in section \ref{sec:DecompositionIntoDiagonals}. Each coefficient $\TT^{({\cdots})}$ is broken into the diagonal and off-diagonal parts using formulas \eqref{eq:Decomposition2Tensor}--\eqref{eq:Decomposition4Tensor}. After that, the equations themsleves are broken into the diagonal and off-diagonal parts using maps \eqref{eq:DiagonalMaps}. Abstractly these steps correspond to computing the block components of the compositions
\[\textstyle
	\left(\bigoplus_{(\lambda)} \bar{\mathfrak{D}}^{(\lambda)}\right) \circ \left(\bar{\mathfrak{P}}^R \circ \mathcal{I}^{(22)} \circ \mathfrak{P}^R\right) \circ \mathfrak{D} \circ \mathfrak{J}
	\quad \text{and} \quad
	\left(\bigoplus_{(\lambda)} \bar{\mathfrak{D}}^{(\lambda)}\right) \circ \left(\bar{\mathfrak{P}}^L \circ \mathcal{I}^{(211)} \circ \mathfrak{P}^R\right) \circ \mathfrak{D} \circ \mathfrak{J}
	.
\]
The resulting sparse equations can now be solved with standard methods of linear algebra. Let us recall that the sparseness was achieved by identifying the stabilizer group of the $\mathcal{I}^{(22)}$ and $\mathcal{I}^{(211)}$ operators (theorem~\ref{the:ICZeroWeightTheorem}) and leveraging Schur's lemma, as discussed in section~\ref{sec:GroupTheory}.

After we perform the calculations we find that all coefficients have to vanish except the diagonals $\TT^{(2,2,0)(0)}, \TT^{(2,2,0)(5)},\TT^{(11,11,0)(6)}$ to which we add $\TT^{(1,1,2)(0)}$ in odd dimensions. The first one, $\TT^{(2,2,0)(0)}$, is \emph{not constrained} by the system, while the other three has to fulfill the only independent equations which are left $(\lambda\neq\kappa)$:
\begin{footnotesize}
\begin{align} 
0 &= 2\sum_\alpha \big( (x_\lambda - x_\alpha)\TT^{(2,2,0)(5)}_{\lambda\alpha} + (x_\kappa -  x_\alpha)\TT^{(2,2,0)(5)}_{\kappa\alpha} \big) \nonumber \\
& \quad - \sum_\alpha \big( (x_\alpha + 3x_\lambda)\TT^{(11,11,0)(6)}_{\lambda\alpha} + (x_\alpha + 3 x_\kappa)\TT^{(11,11,0)(6)}_{\kappa\alpha} \big) + 2(D-2) (x_\kappa + x_\lambda)\TT^{(11,11,0)(6)}_{\lambda\kappa} \nonumber \\
& \quad + \frac{D}{2}x_\lambda\TT^{(1,1,2)(0)}_\lambda + \frac{D}{2}x_\kappa\TT^{(1,1,2)(0)}_\kappa, \label{eq:EquationsForTs} \\
0 &= 2\sum_\alpha \big( -(x_\lambda - x_\alpha)\TT^{(2,2,0)(5)}_{\lambda\alpha} + (x_\kappa -  x_\alpha)\TT^{(2,2,0)(5)}_{\kappa\alpha} \big) + 2(D-2) (x_\lambda - x_\kappa)\TT^{(2,2,0)(5)}_{\lambda\kappa} \nonumber \\
& \quad + \sum_\alpha \big( (x_\alpha + 3x_\lambda)\TT^{(11,11,0)(6)}_{\lambda\alpha} - (x_\alpha + 3 x_\kappa)\TT^{(11,11,0)(6)}_{\kappa\alpha} \big) + (D-2) (x_\kappa - x_\lambda)\TT^{(11,11,0)(6)}_{\lambda\kappa} \nonumber \\
& \quad - \frac{D}{2}x_\lambda\TT^{(1,1,2)(0)}_\lambda + \frac{D}{2}x_\kappa\TT^{(1,1,2)(0)}_\kappa, \nonumber \\
0 &= (D-1)x_\lambda \TT^{(1,1,2)(0)}_\lambda + \sum_\alpha \big( 2(x_\lambda - x_\alpha)\TT^{(2,2,0)(5)}_{\lambda\alpha} - (3x_\lambda + x_\alpha)\TT^{(11,11,0)(6)}_{\lambda\alpha} \big). \nonumber
\end{align} 
\end{footnotesize}

We summarize the calculation in the following theorem and investigate the derived constraints in the next section. 

\begin{theorem}[Integrability condition] \label{the:GeneralizedIntegrability}
Let $\RR_{abcd}$ be the Riemann-sym\-metric tensor
\eqref{eq:DecompositionOfRtoWeightSectors} in a $D=2n+\varepsilon$ dimensional
spacetime equipped with a 2-form $h_{ab}$ as in definition~\ref{def:GeneralH},
which satisfies the integrability condition of CKY equation
$\mathcal{I}_{abcd}{}^{ijkl} \RR_{ijkl} = 0$, where $\mathcal{I}$ is defined
by \eqref{eq:OperatorIC}. 

(a) The integrability condition restricts $\RR_{abcd}$ to have zero global weight with respect to the action of $U(1)^n \rtimes S_n$, hence have the form~\eqref{eq:ZeroWeightRiemann}. 

(b) In the parametrization~\eqref{eq:ZeroWeightRiemann}, most of the $\TT^{(\lambda)}$ coefficients vanish except the following diagonals corresponding to the totally zero weight subspace (lemma~\ref{lem:RelationBetweenParametrizations})
\begin{align*}
\TT^{(2,2,0)(0)}_\mu, \TT^{(2,2,0)(5)}_{\mu\nu},\TT^{(11,11,0)(6)}_{\mu\nu}, \TT^{(1,1,2)(0)}_\mu,
\end{align*}
which are subject to constraints \eqref{eq:EquationsForTs}.
\end{theorem}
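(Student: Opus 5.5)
The plan is to reduce the linear condition $\mathcal{I}_{abcd}{}^{ijkl}\RR_{ijkl}=0$ to a sparse block system using the bijective parametrization of lemma~\ref{lem:Parametrization}, and then solve that system sector by sector. By lemma~\ref{lem:IC1111} only the two equations \eqref{eq:OICR22} and \eqref{eq:OICR211} remain. Since $\bar{\mathfrak{P}}^R$ and $\bar{\mathfrak{P}}^L$ are injective (the completeness relations of section~\ref{sec:GroupTheory}), these equations are equivalent to the vanishing of all projections \eqref{eq:ICProjections22-1}--\eqref{eq:ICProjections211E0E0-3}. By theorem~\ref{the:ICZeroWeightTheorem}, $\mathcal{I}$ intertwines $U(1)^n\rtimes S_n$, so the projected system decouples by global weight, and within each weight it is $S_n$-equivariant. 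I will therefore treat the weights $\pm2$, $\pm1$ and $0$ separately, and within each weight insert the diagonal decomposition $\mathfrak{D}\circ\mathfrak{J}$ of the coefficients (tables~\ref{tab:Diagonals}, \ref{tab:DiagonalsE0}). I will then extract diagonals of the equations with the maps \eqref{eq:DiagonalMaps}.

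For part (a), I handle the nonzero weights first. In the weight $\pm2$ sectors, equations \eqref{eq:ICProjections22-1}, \eqref{eq:ICProjections211-1}, \eqref{eq:ICProjections22E0-1}, \eqref{eq:ICProjections211E0-1}, \eqref{eq:ICProjections211E0-2} (and their mirrors) are of the form $P[x_\mu \TT_{\mu\dots}]=0$ or $P[(3x_\nu-x_\rho)\TT]=0$. Restricted to each diagonal and to the off-diagonal part, these become multiplication by combinations of $x$'s. Because the $x_\mu$ are functionally independent, these combinations are generically nonzero, so each independent diagonal and the off-diagonal part vanish. I plan to argue this by evaluating on the off-diagonal part, where all indices are distinct and the Young projection of $x_\mu\TT$ is invertible for generic distinct $x$'s, and then descending to the lower diagonals.

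In the weight $\pm1$ sectors the unknowns are $\TT^{(21,1,0)},\TT^{(2,0,2)},\TT^{(2,1,1)},\TT^{(11,1,1)}$ and their mirrors. Here I will proceed from the equations with the fewest indices upward. First I use \eqref{eq:ICProjections211E0-5}, whose off-diagonal part ties $\TT^{(11,1,1)}$ to $\TT^{(2,1,1)}$ with coefficient $(D-2)$. Combining it with \eqref{eq:ICProjections22E0-2}, \eqref{eq:ICProjections22E0-3} and \eqref{eq:ICProjections211E0-3}, \eqref{eq:ICProjections211E0-4} should give a nondegenerate system on each diagonal. The trace sums $\sum_\alpha$ only feed into the $\delta$-terms, so they enter only the lower diagonals. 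This lets me first kill the off-diagonal parts and then solve the small linear systems for the diagonals $\TT^{(0)},\TT^{(1)},\TT^{(2)}$. The same strategy applies to $\TT^{(21,1,0)}$ and $\TT^{(2,0,2)}$ using \eqref{eq:ICProjections22-2}, \eqref{eq:ICProjections211-2}, \eqref{eq:ICProjections211-3}, \eqref{eq:ICProjections22E0E0-1} and \eqref{eq:ICProjections211E0E0-1}. This establishes (a).

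For part (b), in the weight-zero sector I substitute the diagonal decompositions of $\TT^{(2,2,0)}$, $\TT^{(11,11,0)}$ and $\TT^{(1,1,2)}$ into \eqref{eq:ICProjections22-3}, \eqref{eq:ICProjections22-4}, \eqref{eq:ICProjections22E0E0-2}, \eqref{eq:ICProjections211-4}--\eqref{eq:ICProjections211-6} and \eqref{eq:ICProjections211E0E0-2}. On the off-diagonal parts and the three-index diagonals, the factors $(x_\mu-x_\sigma)(D-2)$ are nonvanishing, which forces those parts to vanish. What survives are exactly $\TT^{(2,2,0)(0)},\TT^{(2,2,0)(5)},\TT^{(11,11,0)(6)},\TT^{(1,1,2)(0)}$, which are the coordinates of the totally zero weight subspace by lemma~\ref{lem:RelationBetweenParametrizations}. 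Evaluating the remaining two-index diagonals of the equations then yields \eqref{eq:EquationsForTs}. $\TT^{(2,2,0)(0)}$ drops out, consistently with lemma~\ref{lem:IC22OnZeroWeightRiemann}, because its contributions carry factors like $(x_\mu-x_\mu)$.

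The main obstacle I expect is the bookkeeping in the weight $0$ and $\pm1$ sectors. The trace terms $\sum_\alpha$ couple different diagonals, and the Young projections mix index orders, so it is not obvious that the block matrices on the two- and one-index diagonals are nondegenerate for all $D$. My first attempt will be to order the blocks by number of free indices, so that the system becomes triangular, and to check each small determinant as a polynomial in the $x$'s and $D$, with computer assistance in \texttt{xAct}. I will verify that it is not identically zero and rely on the genericity of the $x_\mu$.
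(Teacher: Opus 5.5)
Your plan follows the paper's own argument. You reduce $\mathcal{I}_{abcd}{}^{ijkl}\RR_{ijkl}=0$ through the bijective parametrization to the projected system \eqref{eq:ICProjections22-1}--\eqref{eq:ICProjections211E0E0-3}, decouple it by weight using theorem~\ref{the:ICZeroWeightTheorem}, split coefficients and equations into diagonals via $\mathfrak{D}\circ\mathfrak{J}$ and \eqref{eq:DiagonalMaps}, and solve the sparse blocks by computer-assisted linear algebra, which is how the paper gets the four surviving diagonals and the constraints \eqref{eq:EquationsForTs}.

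Two bookkeeping slips need fixing when you carry it out. First, the global weights in \eqref{eq:DecompositionOfRtoWeightSectors} are $\pm4,\pm3,\pm2,\pm1,0$: your ``$\pm2$'' group is really weights $\pm4,\pm3$, and your ``$\pm1$'' group lumps together the decoupled weight-$\pm2$ and weight-$\pm1$ blocks. Second, in the weight-zero sector the independent two-index diagonals $\TT^{(2,2,0)(1)}$, $\TT^{(2,2,0)(2)}$, $\TT^{(2,2,0)(6)}$ also survive your off-diagonal/three-index stage, and you must show they vanish from the two-index equations. This is clearest from the finer $U(1)^n$ weight decoupling, since only $\TT^{(2,2,0)(0)},\TT^{(2,2,0)(5)},\TT^{(11,11,0)(6)},\TT^{(1,1,2)(0)}$ carry totally zero weight.
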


\subsection{Integrability condition for the totally zero weight Riemann tensor}

In the previous section we showed that the only possible solution of the integrability condition is a Riemann-symmetric tensor of \emph{global zero weight}, theorem \ref{the:GeneralizedIntegrability}. Moreover, during the calculations we derived non-trivial conditions for the remaining diagonal coefficients \eqref{eq:EquationsForTs}. Instead of trying to solve the constraints directly, we use the parametrization \eqref{eq:RiemannInBasis} in the $h$-basis, which is more convenient to use.

For simplicity of notation let us introduce a new product of two tensors.

\begin{definition}\label{def:StarProduct} Let $A_{ab}, B_{ab}$ be tensors. We define a \emph{star product}
\begin{align} \label{eq:StarProduct} 
(A\star B)_{abcd} \equiv \frac{1}{4} (A_{ac} B_{bd} + A_{bd} B_{ac} - A_{bc} B_{ad} - A_{ad} B_{bc})
\end{align}
\end{definition}
This product can be obtained by double antisymmetrization of $A_{ac} B_{bd}$ in $(a,b)$ and $(c,d)$.
From the definition it is obvious that the product is linear in both arguments and that it is symmetric $(A\star B) = (B\star A)$. 
If $A$ is a symmetric and $B$ an antisymmetric tensor, $(A\star B)$ belongs to the (211) subspace of the $(2) \otimes (2)$ tensor product in the decomposition~\eqref{eq:CodomainICParts}. 
In lemmas \ref{lem:IC1111} and \ref{lem:IC22OnZeroWeightRiemann} we showed that only $\mathcal{I}^{(211)}$ is non-zero on totally zero weight RS tensors. It is therefore possible to project the integrability condition \eqref{eq:IntegrabilityConditionCKYE} with (211) Young projector and introduce corresponding YS products of tensors that appear in the resulting equation. With the help of $\star$ product one can jump over this step and approach the calculation directly. The expressions are also easier to handle and recognize. 

The following two lemmas are essential for the calculation:

\begin{lemma} \label{lem:DiagonalStarProds}
The star product of $Q_\mu$ and $h_\mu$ identically vanishes
\begin{align*}
(Q_\mu\star h_\mu) \equiv 0, \qquad \forall \mu \in \{1,\dots,n\}.
\end{align*}
\end{lemma}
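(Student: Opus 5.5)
The plan is a direct component computation, restricted to the 2-plane of the $\mu$-th block. Both $Q_{\mu\,ab}$ and $h_{\mu\,ab}$ are built only from $e^\mu$ and $\hat e^\mu$:
\[
Q_\mu = e^\mu\otimes e^\mu + \hat e^\mu\otimes\hat e^\mu , \qquad
h_\mu = e^\mu\otimes\hat e^\mu - \hat e^\mu\otimes e^\mu .
\]
Every term of $(Q_\mu\star h_\mu)_{abcd}$ is therefore a sum of tensor products of four covectors from the set $\{e^\mu,\hat e^\mu\}$. Moreover, by Definition~\ref{def:StarProduct} the star product is antisymmetric in the pair $(a,b)$ and in the pair $(c,d)$. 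An antisymmetric 2-tensor on a 2-dimensional space is proportional to $e^\mu\wedge\hat e^\mu$. Hence the first step is to conclude that
\[
(Q_\mu\star h_\mu)_{abcd} = \lambda\, h_{\mu\,ab}\, h_{\mu\,cd}
\]
for a single scalar $\lambda$.

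The second step determines $\lambda$ by evaluating one component. I would take $a=1$, $b=\hat 1$, $c=1$, $d=\hat 1$, writing $1$ for $e^\mu$ and $\hat 1$ for $\hat e^\mu$. From \eqref{eq:StarProduct},
\[
4(Q\star h)_{1\hat11\hat1} = Q_{11}h_{\hat1\hat1} + Q_{\hat1\hat1}h_{11} - Q_{\hat11}h_{1\hat1} - Q_{1\hat1}h_{\hat11}.
\]
The diagonal entries of $h$ vanish and the off-diagonal entries of $Q$ vanish, so every term is zero. Thus $\lambda=0$ and the identity holds.

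As an alternative check that avoids choosing a frame, I would use the symmetry structure. The star product of a symmetric and an antisymmetric tensor lies in the $(211)$ subspace. A tensor of the form $h_{ab}h_{cd}$ is instead symmetric under exchange of the two pairs, so it lies in $(22)\oplus(1111)$. A $(211)$ tensor proportional to it must therefore vanish. This gives the same conclusion without computing any component.

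I do not expect a real obstacle here; the only care needed is the reduction to a single component, which rests on the antisymmetry in each index pair together with the 2-dimensionality of the block. The explicit evaluation above makes that reduction immediate.
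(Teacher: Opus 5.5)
Your proof is correct. The paper states this lemma without any proof, treating it as a routine direct check, and your argument carries out exactly that check: you reduce to the one-dimensional space $\Lambda^2 V_\mu\otimes\Lambda^2 V_\mu$ of the $\mu$-th block and then evaluate the single $(1\hat1 1\hat1)$ component. Your alternative argument also stands on its own: for symmetric $Q_\mu$ and antisymmetric $h_\mu$ the star product is antisymmetric under exchanging the index pairs, while $h_{\mu\,ab}h_{\mu\,cd}$ is symmetric, so $\lambda=0$ follows without computing any component.
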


\begin{lemma} \label{lem:LinearIndependenceStarProd}
Tensors $(Q_{\mu} \star h_\nu)$ for $\mu\ne\nu$ are linearly independent
\begin{align*}
\sum_{\substack{\mu, \nu \\ \mu \neq \nu}} a_{\mu \nu} (Q_{\mu} \star h_\nu) + \varepsilon \sum_{\mu} b_\mu ((\hat{e}^0 \hat{e}^0)\star h_\mu) = 0, \quad \Leftrightarrow \quad a_{\mu \nu}= 0 = b_\mu, \quad \forall \mu,\nu.
\end{align*}
\end{lemma}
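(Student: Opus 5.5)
The plan is to expand every star product in the adapted basis $e^{\pm\mu}$, $\hat{e}^0$, and read off linear independence from the fact that distinct products land on disjoint sets of basis monomials. The direction ``$\Leftarrow$'' is trivial, so only ``$\Rightarrow$'' needs an argument.

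The first step is to expand $(Q_\mu\star h_\nu)$ for $\mu\ne\nu$ using \eqref{eq:StarProduct}. With $Q_\mu=\tfrac12(e^{+\mu}e^{+\mu}+e^{-\mu}e^{-\mu})$ and $h_\nu=\tfrac{\ii}{2}(e^{+\nu}e^{-\nu}-e^{-\nu}e^{+\nu})$, each term of $(Q_\mu\star h_\nu)_{abcd}$ is a product of four basis covectors. Two of the slots carry the same $e^{+\mu}$ or the same $e^{-\mu}$, and the other two carry $e^{+\nu}$ and $e^{-\nu}$. Concretely, the $(+\mu,+\mu)$ component is
\[
\tfrac{\ii}{8}\,\big(e^{+\mu}_a e^{+\mu}_c\, w_{bd} + e^{+\mu}_b e^{+\mu}_d\, w_{ac} - e^{+\mu}_b e^{+\mu}_c\, w_{ad} - e^{+\mu}_a e^{+\mu}_d\, w_{bc}\big),
\qquad w=e^{+\nu}\wedge e^{-\nu},
\]
and there is an analogous $(-\mu,-\mu)$ component. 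To see that this is nonzero, I evaluate it on a single component, for instance $a=c$ along the dual of $e^{+\mu}$, with $b,d$ along the duals of $e^{+\nu}$ and $e^{-\nu}$. Only the first term survives there, so the tensor does not vanish. By contrast, for $\mu=\nu$ the cancellation of Lemma~\ref{lem:DiagonalStarProds} happens, but it cannot happen here because $\mu\neq\nu$.

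The second step is a sector argument. The tensor $(Q_\mu\star h_\nu)$ lies in the span of monomials whose index content is the multiset $\{\pm\mu,\pm\mu,+\nu,-\nu\}$, i.e.\ it has $U(1)^n$ weight $\pm2$ in slot $\mu$ and $0$ in slot $\nu$. For an ordered pair $(\mu,\nu)$ with $\mu\neq\nu$, these monomial classes are disjoint from those of any other ordered pair $(\mu',\nu')$. The only possible collision would be the reversed pair $(\nu,\mu)$, but that class is $\{\pm\nu,\pm\nu,+\mu,-\mu\}$, which differs from the original. The odd-dimensional terms $((\hat{e}^0\hat{e}^0)\star h_\mu)$ contain $\hat{e}^0$ twice and no other term contains $\hat{e}^0$ at all, so they also occupy separate sectors. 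Since the products $E^{\tilde\mu\tilde\nu\tilde\rho\tilde\sigma}_{abcd}$ of basis covectors form a basis of four-tensors, the vanishing of the full sum forces each sector to vanish separately. This gives $a_{\mu\nu}(Q_\mu\star h_\nu)=0$ and $b_\mu((\hat{e}^0\hat{e}^0)\star h_\mu)=0$ one term at a time.

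The last step is to check that each single tensor is nonzero. For $(Q_\mu\star h_\nu)$ this was done in the first step. For $(\hat{e}^0\hat{e}^0)\star h_\mu$ I use the component $(0,+\mu,0,-\mu)$, which equals $\tfrac{\ii}{8}\neq0$ up to normalization. Hence all $a_{\mu\nu}$ and $b_\mu$ vanish. The main obstacle is making sure no accidental cancellation happens inside a single sector among the four antisymmetrized terms. I would handle it by exhibiting an explicit nonzero component as above rather than expanding fully.
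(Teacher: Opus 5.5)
There is a genuine gap, and it comes from the expression you used for $Q_\mu$. The formula $Q_\mu=\tfrac12(e^{+\mu}\otimes e^{+\mu}+e^{-\mu}\otimes e^{-\mu})$ printed in \eqref{eq:DecomposedQ} is a misprint. Expanding $e^{\pm\mu}=e^\mu\pm\ii\hat{e}^\mu$ shows it equals $e^\mu\otimes e^\mu-\hat{e}^\mu\otimes\hat{e}^\mu$, not $e^\mu\otimes e^\mu+\hat{e}^\mu\otimes\hat{e}^\mu$. The correct expression is $Q_\mu=\tfrac12(e^{+\mu}\otimes e^{-\mu}+e^{-\mu}\otimes e^{+\mu})$. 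This matches the stated form of $g_{ab}$ and the paper's remark that $Q_\mu$ is $U(1)^n$-invariant. It must be invariant in any case, since the $(Q_\mu\star h_\nu)$ arise from applying the intertwiner $\mathcal{I}$ of Theorem~\ref{the:ICZeroWeightTheorem} to a totally zero weight tensor.

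Consequently $(Q_\mu\star h_\nu)$ is totally zero weight, and each of its monomials contains exactly one each of $e^{+\mu},e^{-\mu},e^{+\nu},e^{-\nu}$. That is the same content as the reversed product $(Q_\nu\star h_\mu)$. Your sector argument therefore fails exactly at the collision you dismissed. It separates only unordered pairs $\{\mu,\nu\}$ and the $\hat{e}^0$ terms. You still have to show that $a_{\mu\nu}(Q_\mu\star h_\nu)+a_{\nu\mu}(Q_\nu\star h_\mu)=0$ forces $a_{\mu\nu}=a_{\nu\mu}=0$. Showing that each tensor is nonzero does not settle this, and components must be chosen with care. For instance, the monomial $e^{+\mu}_a e^{+\nu}_b e^{-\mu}_c e^{-\nu}_d$ occurs with the same coefficient $\ii/16$ in both tensors. (The explicit component formula in your first step rests on the same wrong expression.)

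The repair is short and in fact makes the sector decomposition unnecessary. Evaluate the whole sum on the real dual Darboux frame vectors $e_\mu,\hat{e}_\mu,\hat{e}_0$. Take the component $(e_\mu,e_\nu,e_\mu,\hat{e}_\nu)$ with $\mu\neq\nu$:
\begin{itemize}
\item In $(Q_\mu\star h_\nu)$ only the term $\tfrac14 Q_{\mu\,ac}h_{\nu\,bd}$ survives, giving $\tfrac14$.
\item Every term of $(Q_\nu\star h_\mu)$ vanishes: each contains $Q_\nu$ evaluated on a $\mu$-plane vector, or the factors $Q_\nu(e_\nu,\hat{e}_\nu)=0$ and $h_\mu(e_\mu,e_\mu)=0$.
\item Every product involving a third plane or $\hat{e}^0$ also vanishes there.
\end{itemize}
So the sum equals $\tfrac14 a_{\mu\nu}$ on that component. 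Likewise, the component $(\hat{e}_0,e_\mu,\hat{e}_0,\hat{e}_\mu)$ isolates $\tfrac14 b_\mu$. The missing idea is that $(Q_\mu\star h_\nu)$ and $(Q_\nu\star h_\mu)$ are not distinguished by weight. They are distinguished by which plane carries the symmetric factor and which carries the antisymmetric one.
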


When we take \eqref{eq:RiemannInBasis}, substitute in into \eqref{eq:IntegrabilityConditionCKYE}, and use the previous lemmas, we arrive at constraints summarized in the lemma below.

\begin{lemma} \label{lem:ICZeroWeightConstraints}
Consider a totally zero weight RS tensor $\mathcal{R}_{abcd}$ in the parametrization \eqref{eq:RiemannInBasis}. Then the integrability
condition \eqref{eq:IntegrabilityConditionCKYE} is equivalent to the following
system for the coefficients $\Omega_{\mu\nu}, \alpha_{\mu\nu},
\alpha^{(\epsilon)}_{\mu}$:
{\footnotesize
\begin{align}
\left(n+\frac{\varepsilon - 2}{2}\right)(\Omega_{\mu \nu} x_\mu + (\alpha_{\mu \nu} + \alpha^{(\epsilon)}_{\mu}) x_\nu) - \sum_{\rho\neq \nu}(\Omega_{\rho \nu} x_\rho + (\alpha_{\rho \nu} + \alpha^{(\epsilon)}_{\rho}) x_\nu) &= 0, \quad (\mu \neq \nu) \label{eq:ICZeroWeightConstraints} \\
\sum_{\rho\neq \nu}(\Omega_{\rho \nu} x_\rho + (\alpha_{\rho \nu} + \alpha^{(\epsilon)}_{\rho}) x_\nu) &= 0, \quad (\varepsilon \neq 0). \label{eq:ICZeroWeightConstraintsEps}
\end{align}
}
\end{lemma}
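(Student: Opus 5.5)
The plan is to substitute the $h$-basis expansion \eqref{eq:RiemannInBasis} directly into the three terms of \eqref{eq:IntegrabilityConditionCKYE} and reduce everything with the contraction rules of lemmas~\ref{lem:ProductsOfHsQs}, \ref{lem:TracesOfHsQs} and \ref{lem:ProductsOfYSHsQs}.

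\emph{Term by term.} I would handle the three terms as follows.
\begin{itemize}
\item The Ricci term $R^{[a}{}_e\delta^{b]}_{[c}h^e{}_{d]}$ uses the traces of lemma~\ref{lem:TracesOfHsQs}. These give $\mathrm{Ric}=\sum_\mu r_\mu Q_\mu + r_\epsilon Q_\epsilon$, with $r_\mu$ linear in $\Omega_{\mu\mu},\alpha_{\mu\nu},\alpha^{(\epsilon)}_\mu$. Lemma~\ref{lem:ProductsOfHsQs} then turns $\mathrm{Ric}\cdot h$ into $\sum_\mu c_\mu x_\mu h_\mu$, so this term becomes a combination of $g\star h_\mu$.
\item The term $R^{b}{}_{d}{}^{ef}h_{ef}$ uses the full contractions with $h_\rho$ from lemma~\ref{lem:ProductsOfYSHsQs}, multiplied by $x_\rho$ and summed. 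This again gives a combination of $h_\mu$, hence also terms of the form $g\star h_\mu$.
\item The term $(D-2)R^{ab}{}_{e[c}h^e{}_{d]}$ uses the single contractions $(\cdot\odot\cdot)_{abce}h_\rho{}^e{}_d$ from lemma~\ref{lem:ProductsOfYSHsQs}. After antisymmetrising in $cd$, these become sums of products $Q_\alpha h_\beta$ arranged exactly as star products $Q_\alpha\star h_\beta$. The term also contributes pieces like $h_\alpha\star h_\beta$ and $Q\star Q$ from the $(22)$ part, but by lemmas~\ref{lem:IC1111} and \ref{lem:IC22OnZeroWeightRiemann} the total $(22)$ and $(1111)$ parts cancel. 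So only the $(211)$ star-product structure survives, and I would simply track it.
\end{itemize}

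\emph{Reduction to independent tensors.} Next I expand $g=\sum_\rho Q_\rho+\varepsilon\,\hat{e}^0\hat{e}^0$, so $g\star h_\nu=\sum_\rho Q_\rho\star h_\nu+\varepsilon(\hat e^0\hat e^0)\star h_\nu$. Lemma~\ref{lem:DiagonalStarProds} removes the $\rho=\nu$ terms, and similarly $Q_\epsilon\star h_\nu$ is expanded. The whole left-hand side then becomes
\[
\sum_{\mu\neq\nu}a_{\mu\nu}(Q_\mu\star h_\nu)+\varepsilon\sum_\nu b_\nu\big((\hat e^0\hat e^0)\star h_\nu\big).
\]
Each $a_{\mu\nu}$ collects three kinds of contribution. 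The direct terms $\Omega_{\mu\nu}x_\mu$ and $(\alpha_{\mu\nu}+\alpha^{(\epsilon)}_\mu)x_\nu$ come from the $(D-2)$ term; the $x$-factors arise because $h_\rho$ contractions pick out $x_\rho$. The remaining pieces are $\nu$-dependent, $\mu$-independent sums $\sum_{\rho\ne\nu}(\ldots)$ coming from the $g\star h_\nu$ terms. The coefficient $b_\nu$ is built only from these latter sums.

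\emph{Conclusion and main obstacle.} By lemma~\ref{lem:LinearIndependenceStarProd}, vanishing is equivalent to $a_{\mu\nu}=0$ and $b_\nu=0$. After normalising, these should read \eqref{eq:ICZeroWeightConstraints} and \eqref{eq:ICZeroWeightConstraintsEps}, with the factor $(D-2)/2=n+(\varepsilon-2)/2$. The main obstacle is bookkeeping: making sure the diagonal $\Omega_{\mu\mu}$ and $\alpha^{(\epsilon)}$ contributions from the Ricci and $h$-trace terms reorganise into exactly the combination $\sum_{\rho\ne\nu}(\Omega_{\rho\nu}x_\rho+(\alpha_{\rho\nu}+\alpha^{(\epsilon)}_\rho)x_\nu)$. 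This includes the cancellation of $\Omega_{\nu\nu}$, which should occur via lemma~\ref{lem:DiagonalStarProds}. I would verify this cancellation first by hand in the simplest case $n=2$, $\varepsilon=0$, and then by \texttt{xAct} in general.
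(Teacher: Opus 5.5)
Your proposal is correct and is essentially the paper's argument: substitute \eqref{eq:RiemannInBasis} into \eqref{eq:IntegrabilityConditionCKYE}, reduce everything to $Q_\mu\star h_\nu$ and $g\star h_\nu$, discard $Q_\nu\star h_\nu$ by lemma~\ref{lem:DiagonalStarProds}, and read off the coefficients via lemma~\ref{lem:LinearIndependenceStarProd}. All of these star products are already of type $(211)$, so no $h\star h$ or $Q\star Q$ pieces actually appear. One small correction to your bookkeeping forecast: lemma~\ref{lem:DiagonalStarProds} only removes $\Omega_{\nu\nu}$ from the $(D-2)$ term, whereas its $g\star h_\nu$ contributions from the Ricci term ($+\tfrac34\Omega_{\nu\nu}x_\nu$) and from the $R^{b}{}_{d}{}^{ef}h_{ef}$ term ($-\tfrac34\Omega_{\nu\nu}x_\nu$) cancel each other directly; likewise the $\alpha^{(\epsilon)}_\nu x_\nu\,(g\star h_\nu)$ pieces cancel between the $(D-2)$ term and the two trace terms, leaving $\tfrac{D-2}{4}\big(\Omega_{\mu\nu}x_\mu+(\alpha_{\mu\nu}+\alpha^{(\epsilon)}_\mu)x_\nu\big)-\tfrac12\sum_{\rho\ne\nu}(\cdots)$ as the coefficient of $Q_\mu\star h_\nu$ and $-\tfrac{\varepsilon}{2}\sum_{\rho\ne\nu}(\cdots)$ as that of $(\hat{e}^0\hat{e}^0)\star h_\nu$.
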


The constraints on the lemma are compatible with the ones we derived before \eqref{eq:EquationsForTs}.
Consider the first equations from \eqref{eq:EquationsForTs}, say $E_1, E_2$. When we take their difference $E_1 - E_2$ and  use the relations \eqref{eq:RelationBetweenParametrizations} we get 
\begin{align*}
\frac{1}{2}(D-2)(\Omega_{\mu \nu} x_\mu + (\alpha_{\mu \nu} + \alpha^{(\epsilon)}_{\mu}) x_\nu) - \sum_{\rho\neq \nu}(\Omega_{\rho \nu} x_\rho + (\alpha_{\rho \nu} + \alpha^{(\epsilon)}_{\rho}) x_\nu) &= 0, \quad (\mu \neq \nu),
\end{align*}
while the last equation from \eqref{eq:EquationsForTs} present in odd dimensions reads
\begin{align*}
\sum_{\rho\neq \mu}(\Omega_{\rho \mu} x_\rho + (\alpha_{\rho \mu} + \alpha^{(\epsilon)}_{\rho}) x_\mu) &= 0,
\end{align*}
which precisely agrees with the equations \eqref{eq:ICZeroWeightConstraints}.
Let us mention that the sum of the equations is not independent, but it is equal to the difference with exchanged indices, namely $(E_1 + E_2)_{\mu\nu} = (E_1 - E_2)_{\nu\mu}$. The constraints \eqref{eq:EquationsForTs} are fully exhausted.

Solving of the equations \eqref{eq:ICZeroWeightConstraints} is a rather non-trivial task covered in the following theorem.

\begin{theorem}[Integrability condition, totally zero weight] \label{th:IntegrabilityCondition}
With the notation and assumptions of theorem~\ref{the:GeneralizedIntegrability},
the integrability condition $\mathcal{I}_{abcd}{}^{ijkl} \RR_{ijkl} = 0$ has
rank $n^2 + (\varepsilon-2) n$ when restricted to the subspace totally zero
weight Riemann-symmetric tensors $\RR_{ijkl}$. Moreover, when parametrized as
in lemma~\ref{lem:RelationBetweenParametrizations}, the general solution for $n
\geqslant 2$ and $(\mu \neq \nu)$ can be written explicitly as
\begin{align} \label{eq:ExplicitCoefficientsIntCon}
\alpha_{\mu \nu} = \frac{x_\mu d_\mu - x_\nu d_\nu}{x^2_\mu - x^2_\nu} - \varepsilon \left( \frac{d_\mu}{x_\mu} + \frac{d_\nu}{x_\nu}\right), \quad 
\Omega_{\mu \nu} = \frac{x_\mu d_\nu - x_\nu d_\mu}{x^2_\mu - x^2_\nu}, \quad
\alpha^{(\epsilon)}_\mu = \varepsilon \frac{d_\mu}{x_\mu},
\end{align}
where $\{d_1, \dots, d_n\}$ are $n$ independent real parameters.
\end{theorem}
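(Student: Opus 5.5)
The plan is to solve the linear system of lemma~\ref{lem:ICZeroWeightConstraints} directly. By that lemma it is equivalent to the integrability condition on the totally zero weight subspace. The key observation is that the unknowns enter only through the combinations
\[
W_{\mu\nu} \equiv \Omega_{\mu\nu} x_\mu + (\alpha_{\mu\nu} + \alpha^{(\epsilon)}_\mu) x_\nu , \qquad \mu\neq\nu ,
\]
which are defined for \emph{ordered} pairs and are not symmetric. The diagonal coefficients $\Omega_{\mu\mu}$ do not appear at all, so they are free; they contribute $n$ dimensions to the kernel. With $S_\nu \equiv \sum_{\rho\neq\nu} W_{\rho\nu}$ and $c = n + \tfrac{\varepsilon-2}{2}$, equation~\eqref{eq:ICZeroWeightConstraints} reads $c\,W_{\mu\nu} = S_\nu$.

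First I sum this over the $n-1$ values $\mu\neq\nu$, which gives $(c-n+1) S_\nu = \tfrac{\varepsilon}{2} S_\nu = 0$. In even dimensions this is vacuous, and since $c = n-1 \neq 0$ for $n\geq 2$, the system is equivalent to $W_{\mu\nu} = d_\nu$ for every $\mu \neq \nu$, for some $n$ functions $d_\nu$. In odd dimensions we get $S_\nu = 0$, which is also \eqref{eq:ICZeroWeightConstraintsEps}, and hence $W_{\mu\nu} = 0$ for all $\mu\neq\nu$. Here I set $d_\mu \equiv x_\mu \alpha^{(\epsilon)}_\mu$, which is a free choice of $n$ parameters.

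Next, for each unordered pair $\{\mu,\nu\}$ I use the symmetry of $\Omega_{\mu\nu}$ and $\alpha_{\mu\nu}$. The two equations $W_{\mu\nu} = \dots$ and $W_{\nu\mu} = \dots$ form a $2\times 2$ linear system for $(\Omega_{\mu\nu}, \alpha_{\mu\nu})$:
\begin{align*}
\Omega_{\mu\nu} x_\mu + \alpha_{\mu\nu} x_\nu &= d_\nu - \varepsilon \tfrac{d_\mu}{x_\mu} x_\nu , \\
\Omega_{\mu\nu} x_\nu + \alpha_{\mu\nu} x_\mu &= d_\mu - \varepsilon \tfrac{d_\nu}{x_\nu} x_\mu .
\end{align*}
The determinant is $x_\mu^2 - x_\nu^2$. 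Cramer's rule then yields exactly \eqref{eq:ExplicitCoefficientsIntCon}. In the even case this is immediate. In the odd case the $\varepsilon$ terms rearrange into $-\varepsilon(d_\mu/x_\mu + d_\nu/x_\nu)$ in $\alpha_{\mu\nu}$, while they leave $\Omega_{\mu\nu}$ unchanged. I would double-check this by substituting back: one finds $W_{\mu\nu} = d_\nu - d_\nu = 0$. Conversely, any choice of $d_\mu$ produces a solution, so the solution space is parametrized exactly by $\{\Omega_{\mu\mu}\}$ and $\{d_\mu\}$. Its dimension is therefore $2n$, and the rank is $(n^2+\varepsilon n) - 2n = n^2 + (\varepsilon-2)n$, using $\dim\mathfrak{R}_0$ from the count after \eqref{eq:hBasis}.

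The main obstacle I expect is justifying the nondegeneracy assumptions pointwise. The $2\times2$ solve requires $x_\mu^2 \neq x_\nu^2$ for $\mu\neq\nu$. The odd-dimensional reparametrization $d_\mu = x_\mu \alpha^{(\epsilon)}_\mu$ requires $x_\mu \neq 0$. Both should follow from the non-degeneracy and genericity of $h$ in definition~\ref{def:GeneralH}, at least on the open dense set where these hold. I would also verify carefully that, in the even case, the summed equation imposes no constraint on $d_\nu$ beyond $W_{\mu\nu}$ being independent of $\mu$. That check uses $n\geq 2$, so that there is at least one $\mu\neq\nu$ and $c\neq 0$.
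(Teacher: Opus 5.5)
Your route is the paper's. In even dimensions $d_\nu$ is the common value of $W_{\mu\nu}$, which is the paper's \eqref{eq:d-param0}. In odd dimensions, summing over $\mu\neq\nu$ shows that \eqref{eq:ICZeroWeightConstraintsEps} is redundant and forces $W_{\mu\nu}=0$, with $d_\mu=x_\mu\alpha^{(\epsilon)}_\mu$. Each unordered pair is then a $2\times 2$ Cramer solve. Your explicit kernel count ($n$ free $\Omega_{\mu\mu}$ plus $n$ free $d_\mu$, giving rank $n^2+(\varepsilon-2)n$) is a welcome addition, since the paper's proof leaves the rank implicit.

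There is, however, an error in your odd-dimensional $2\times 2$ system. For $\varepsilon=1$ you have $W_{\mu\nu}=0$, not $W_{\mu\nu}=d_\nu$, so the right-hand sides must be
\[
x_\mu\Omega_{\mu\nu}+x_\nu\alpha_{\mu\nu}=-\frac{x_\nu}{x_\mu}\,d_\mu ,\qquad
x_\nu\Omega_{\mu\nu}+x_\mu\alpha_{\mu\nu}=-\frac{x_\mu}{x_\nu}\,d_\nu .
\]
Uniformly, the first right-hand side is $(1-\varepsilon)d_\nu-\varepsilon\frac{x_\nu}{x_\mu}d_\mu$; this is exactly the paper's odd-case system.

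With the extra $d_\nu$ you kept, Cramer's rule returns $\Omega_{\mu\nu}=2\frac{x_\mu d_\nu-x_\nu d_\mu}{x_\mu^2-x_\nu^2}$, and $\alpha_{\mu\nu}$ is likewise shifted by the even-dimensional solution. So the claim that the $\varepsilon$-terms ``leave $\Omega_{\mu\nu}$ unchanged'' is false for the system you displayed. What is actually true is that the $\varepsilon$-terms alone already produce the whole odd-dimensional answer \eqref{eq:ExplicitCoefficientsIntCon}.

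Your back-substitution check correctly yields $W_{\mu\nu}=0$. That contradicts your displayed system, which encodes $W_{\mu\nu}=d_\nu$, so carrying out the check would have exposed the slip. Once the right-hand side is corrected, the formulas, the converse direction, and the rank count all go through as you describe.
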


\begin{proof}
We need to find the complete solution for the equations
\eqref{eq:ICZeroWeightConstraints},
\eqref{eq:ICZeroWeightConstraintsEps} from lemma
\ref{lem:ICZeroWeightConstraints}.

First, consider the even dimensions ($\eps=0$). Let us introduce the parameters
\begin{equation} \label{eq:d-param0}
	(n-1) d_\nu = \sum_{\rho\neq \nu}(\Omega_{\rho \nu} x_\rho + (\alpha_{\rho \nu} + \alpha^{(\eps)}_{\rho}) x_\nu)
		.
\end{equation}
For a given pair $\mu \ne \nu$, using both
equation~\eqref{eq:ICZeroWeightConstraints} and its index swapped
version, together with the symmetry of $\alpha_{\nu\mu} =
\alpha_{\mu\nu}$ and $\Omega_{\nu\mu} = \Omega_{\mu\nu}$ gives 
\[
	\begin{pmatrix} x_\mu & x_\nu \\ x_\nu & x_\mu \end{pmatrix}
	\begin{pmatrix} \Omega_{\mu\nu} \\ \alpha_{\mu\nu} \end{pmatrix}
	= \begin{pmatrix} d_\nu \\ d_\mu \end{pmatrix}
	\quad \iff \quad
	\begin{pmatrix} \Omega_{\mu\nu} \\ \alpha_{\mu\nu} \end{pmatrix}
	= \begin{pmatrix}
		\frac{x_\mu d_\mu - x_\nu d_\nu}{x^2_\mu - x^2_\nu} \\
		\frac{x_\mu d_\nu - x_\nu d_\mu}{x^2_\mu - x^2_\nu}
	\end{pmatrix} .
\]
Further constraints on the $d_\mu$ parameters can be obtained only by
plugging the above solution into the defining
equation~\eqref{eq:d-param0}. Direct calculation shows that the
right-hand side evaluates exactly to $(n-1) d_\nu$, which means that the
$d_\mu$'s are not constrained and hence parametrize the general solution
of~\eqref{eq:ICZeroWeightConstraints}.

Next, consider odd dimensions ($\eps=1$). Let us introduce the parameters
\begin{equation} \label{eq:d-param1}
	d_\nu = x_\nu \alpha^{(\eps)}_\nu .
\end{equation}
Adding~\eqref{eq:ICZeroWeightConstraintsEps}
to~\eqref{eq:ICZeroWeightConstraints} cancels the last term with the
summation over $\rho$. Summing the result over $\mu\ne\nu$ with $\nu$
fixed, gives $(n-\frac{1}{2})$
times~\eqref{eq:ICZeroWeightConstraintsEps}, which is then no longer an
independent equation. As above, using the simplified equation in the
original and index swapped versions gives
\[
	\begin{pmatrix} x_\mu & x_\nu \\ x_\nu & x_\mu \end{pmatrix}
	\begin{pmatrix} \Omega_{\mu\nu} \\ \alpha_{\mu\nu} \end{pmatrix}
	= \begin{pmatrix}
			-d_\mu \frac{x_\nu}{x_\mu} \\
			-d_\nu \frac{x_\mu}{x_\nu}
		\end{pmatrix}
	\quad \iff \quad
	\begin{pmatrix} \Omega_{\mu\nu} \\ \alpha_{\mu\nu} \end{pmatrix}
	= \begin{pmatrix}
		\frac{x_\mu d_\mu - x_\nu d_\nu}{x^2_\mu - x^2_\nu}
		- \left(\frac{d_\mu}{x_\mu} + \frac{d_\nu}{x_\nu}\right) \\
		\frac{x_\mu d_\nu - x_\nu d_\mu}{x^2_\mu - x^2_\nu}
	\end{pmatrix} .
\]
There are no more independent equations, so the $d_\mu$ are again
unconstrained and parametrize the general solution
of~\eqref{eq:ICZeroWeightConstraints}
and~\eqref{eq:ICZeroWeightConstraintsEps}.

While the origin of the $d_\mu$ parameters in even and odd
dimensions is somewhat different, the particular choices we made
above allow the final form of the solutions to have a desirable uniform
structure, which is the main
conclusion~\eqref{eq:ExplicitCoefficientsIntCon} of this theorem.
\end{proof}

\begin{remark}
The proof works for every $n > 1$, however, the original system of equations \eqref{eq:ICZeroWeightConstraints}, \eqref{eq:ICZeroWeightConstraintsEps} is identically satisfied for $n=2$ and doesn't put any restrictions on the parameters. The number of free parameters coincides, thus we can extend the formulas \eqref{eq:ExplicitCoefficientsIntCon} also to $n=2$ as a reparametrization. 
\end{remark}

We can immediately notice that the form of the off-diagonal coefficients $\alpha_{\mu \nu}, \Omega_{\mu \nu}$ and $\alpha_{\mu}^{(\epsilon)}$ \eqref{eq:ExplicitCoefficientsIntCon} is strikingly similar to functions \eqref{eq:BasisCoefsToHamFcions}, which we derived for the coefficients from the calculation of the Riemann tensor from the curvature two-forms \eqref{eq:HamamotoFormulas}. After a~short comparison we conclude.

\begin{proposition} \label{th:PreliminaryFormOfdmu}
The coefficients $\alpha_{\mu \nu}$, $\Omega_{\mu \nu}$, and $\alpha_{\mu}^{(\epsilon)}$ $(\mu\neq\nu)$ specified by formulas \eqref{eq:ExplicitCoefficientsIntCon} agree precisely with formulas \eqref{eq:BasisCoefsToHamFcions} when we choose
\begin{align} \label{eq:dAndPDQT}
d_\mu = 4 \pd_\mu \mathsf{M}.
\end{align}
\end{proposition}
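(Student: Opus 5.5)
The plan is a direct substitution. I will insert the ansatz $d_\mu = 4\pd_\mu \mathsf{M}$ into the three formulas \eqref{eq:ExplicitCoefficientsIntCon} of theorem~\ref{th:IntegrabilityCondition}. I will also rewrite the right-hand sides of \eqref{eq:BasisCoefsToHamFcions} from theorem~\ref{the:RiemannInBasis} using the definitions \eqref{eq:CurvatureFunctions} of $S_{\mu\nu}, F_{\mu\nu}, E_\mu$ in terms of $\mathsf{M}$. Both sides are then explicit rational expressions in $x_\mu$ and $\pd_\mu\mathsf{M}$, and I will compare them term by term for $\mu\neq\nu$. The diagonal coefficients $\Omega_{\mu\mu}$, which involve $D_\mu$, are not part of the claim and are left aside.

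The steps, in order, are as follows.

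First, the $\Omega$ coefficient. Using $F_{\mu\nu}$ from \eqref{eq:CurvatureFunctions},
\[
8F_{\mu\nu} = \frac{4\,(x_\mu\pd_\nu\mathsf{M} - x_\nu\pd_\mu\mathsf{M})}{x_\mu^2 - x_\nu^2} = \frac{x_\mu d_\nu - x_\nu d_\mu}{x_\mu^2 - x_\nu^2},
\]
which is exactly $\Omega_{\mu\nu}$ in \eqref{eq:ExplicitCoefficientsIntCon}.

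Second, the $\alpha^{(\epsilon)}$ coefficient. Since $E_\mu = -\pd_\mu\mathsf{M}/(2x_\mu)$, we get $-8\varepsilon E_\mu = 4\varepsilon\,\pd_\mu\mathsf{M}/x_\mu = \varepsilon d_\mu/x_\mu$, as required.

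Third, the $\alpha_{\mu\nu}$ coefficient, which splits into two pieces. The first piece is
\[
-8S_{\mu\nu} = \frac{4\,(x_\mu\pd_\mu\mathsf{M} - x_\nu\pd_\nu\mathsf{M})}{x_\mu^2-x_\nu^2} = \frac{x_\mu d_\mu - x_\nu d_\nu}{x_\mu^2-x_\nu^2}.
\]
The second piece is $8\varepsilon(E_\mu+E_\nu) = -\varepsilon\,(d_\mu/x_\mu + d_\nu/x_\nu)$. Their sum reproduces the $\alpha_{\mu\nu}$ of \eqref{eq:ExplicitCoefficientsIntCon}.

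I do not expect a genuine obstacle, since each identity is a one-line rescaling by the factor $4$ hidden in the $\tfrac12$ prefactors of \eqref{eq:CurvatureFunctions}. The only point needing care is bookkeeping. I must use the assignment of $\Omega_{\mu\nu}$ and $\alpha_{\mu\nu}$ as written in the statement of theorem~\ref{th:IntegrabilityCondition}: $\Omega$ carries the numerator $x_\mu d_\nu - x_\nu d_\mu$, and $\alpha$ carries $x_\mu d_\mu - x_\nu d_\nu$. This matters because the intermediate matrix display in its proof lists them in the opposite order. With the statement's convention the match with $F_{\mu\nu}$ and $S_{\mu\nu}$ respectively is exact, including signs.

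I will also check that the $\varepsilon$-dependent terms coincide. In $\alpha_{\mu\nu}$ these are the terms containing $c/A^{(n)}$, which enter only through $\mathsf{M}$. They are therefore automatically accounted for by $d_\mu = 4\pd_\mu\mathsf{M}$, with no separate treatment of odd dimensions beyond the explicit $\varepsilon$ factors already compared above.
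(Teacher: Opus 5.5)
Your proposal is correct and is the paper's argument: the paper justifies the proposition only by a ``short comparison'' of \eqref{eq:ExplicitCoefficientsIntCon} with \eqref{eq:BasisCoefsToHamFcions} via \eqref{eq:CurvatureFunctions}, and your three one-line substitutions carry that comparison out explicitly. Your caution about the convention is also justified: solving the $2\times 2$ system in the proof of theorem~\ref{th:IntegrabilityCondition} by Cramer's rule gives $\Omega_{\mu\nu}=(x_\mu d_\nu - x_\nu d_\mu)/(x_\mu^2-x_\nu^2)$, as in the theorem statement, so the swapped entries in the displayed solutions of that proof are a typo, and the statement's assignment is the right one to compare against.
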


It is now natural to ask, what further conditions on $R_{abcd}$ would force~\eqref{eq:dAndPDQT} to hold, so that it would exactly reproduce the Kerr-NUT-(A)dS curvature tensor. One covariant differential equation, that would be identically satisfied by an actual Riemann tensor, is right at hand, namely, the II. Bianchi identity \eqref{eq:IIBianchiIdentity}. Preliminary consideration suggests that it will give restrictions on the diagonal omegas $\Omega_{\mu \mu}$, because there is no special symmetry, which would eliminate these terms and it contains derivatives of the basis coefficients in \eqref{eq:RiemannInBasis}, that is derivatives of the parameters $d_\mu$. On the other hand, the II. Bianchi identity has many more independent components. So we may expect to obtain further independent differential constraints on the $d_\mu$ from it. Section \ref{sec:IIBianchiIdentity} is dedicated to verification of these hypotheses, where under some reasonable conditions we will in fact obtain differential constraints that force $d_\mu$ to take the desired form~\eqref{eq:dAndPDQT}. 

\section{II. Bianchi identity for the totally zero weight Riemann tensor} \label{sec:IIBianchiIdentity}

The Riemann tensor has by definition Young symmetry (22). It is reflected in mono-term symmetries related to exchange of its indices, while the multi-term symmetry is represented by the first Bianchi identity. In our construction of formula \eqref{eq:RiemannInBasis}, parametrizing the totally weight zero subspace of Riemann-symmetric tensors, all these properties are already incorporated. However, a true Riemann tensor $R_{abcd}$ has another symmetry property associated with (32) Young symmetry of its covariant derivative, which is captured in the \emph{second Bianchi identity} 
\begin{align} \label{eq:IIBianchiIdentity}
\nabla_{[e} R_{ab]cd} = 0.
\end{align}
As opposed to the first Bianchi identity, it is not guaranteed by our construction. In this section we investigate the consequences of~\eqref{eq:IIBianchiIdentity} for the totally zero weight RS tensor in the $h$-basis \eqref{eq:RiemannInBasis}.  

When we substitute \eqref{eq:RiemannInBasis} into \eqref{eq:IIBianchiIdentity} and start to substitute and evaluate all necessary relations, we soon discover than the number of terms grows rapidly with every substitution so that the calculation becomes unmanageable. In order to derive conditions the equation implies, we choose analogous procedure to the one used in section \ref{sec:GeneralizedIC} - we decompose it into sectors of products of basis vectors $e^\mu, \hat{e}^\mu, \hat{e}^0$ from which each one belongs to an irreducible space under the group $GL(n)\times GL(n)$. Further decompositions that respect only the smaller $U(1)^n \rtimes S_n$ group are not relevant in this case, and we use only decomposition into diagonals, section \ref{sec:DecompositionIntoDiagonals}.

\begin{remark} \label{rem:IIBianchiInterpretation}
We must make a very important remark about the meaning of equation~\eqref{eq:IIBianchiIdentity}, which highlights the main shortcoming of the results that will be obtained in this section. To correctly interpret all the formulas and constraints that will follow in this section from the second Bianchi identity~\eqref{eq:IIBianchiIdentity}, the reader should note that $R_{abcd}$ refers to an arbitrary 4-tensor with Riemann symmetries that is in the totally zero weight form~\eqref{eq:RiemannInBasis}, with respect to a Darboux basis on a specific Kerr-NUT-(A)dS spacetime, while $\nabla_e$ is the Levi-Civita covariant derivative of the same geometry. 
Although we will eventually conclude (theorem~\ref{the:IIBianchiIdentity} and theorem~\ref{th:SolutionOfdEquation}) that the resulting constraints do force $R_{abcd}$ to coincide with some Riemann tensor of the Kerr-NUT-(A)dS family (not necessarily coinciding with the background geometry).
From the point of view of an IDEAL characterization, this conclusion has the shortcoming that it had assumed certain features of the background geometry (namely the interaction of the covariant derivative with the Darboux basis) and so cannot be taken as a sufficient condition for a characterization.
A more comprehensive analysis, which we did not have time to carry out, could for instance interpret the covariant derivative in~\eqref{eq:IIBianchiIdentity} as a general torsion-free affine connection and see if some of the sectors of that equation (which in our calculations are automatically satisfied) would force this connection to coincide with the background Levi-Civita connection. In that case, the pairing of theorems \ref{the:IIBianchiIdentity} and \ref{th:SolutionOfdEquation} with theorems~\ref{th:IntegrabilityCondition} and~\ref{the:GeneralizedIntegrability} would constitute sufficient conditions for the characterization of the Riemann tensor of a Kerr-NUT-(A)dS geometry involving a 2-form $h_{ab}$ as the only background geometric field. If any explicit reference to $h_{ab}$ could be eliminated then our results would indeed constitute an IDEAL characterization of the Kerr-NUT-(A)dS family of spacetimes. Of course, such considerations should be part of future investigations.
\end{remark}

In section \ref{sec:GeneralizedIC} we introduced the new vectors \eqref{eq:BasisEpm}, which allowed us to do the decomposition into sectors according to weights and use the full potential of the local group of symmetry $U(1)^n \rtimes S_n$, see e.g.\ theorem \ref{the:ICZeroWeightTheorem}. However, here we operate only with the totally zero weight formula \eqref{eq:RiemannInBasis} and its covariant derivative, in which it would be disadvantageous to switch to $e^{\pm}$. Thus, we remain with $e^\mu, \hat{e}^\mu, \hat{e}^0$.

Young symmetry of the covariant derivative of the Riemann tensor according to the Pieri's rule~\cite[Eq.(6.8)]{Fulton1996} is
\begin{align*}
\nabla R \leftrightarrow \yngbig(1) \otimes \yngbig(2,2) \cong \yngbig(3,2) \oplus \yngbig(2,2,1) \, ,
\end{align*}
from which the second part (221) corresponds to the second Bianchi identity and vanishes. Therefore, we define  projectors onto subspace of this symmetry. As before, we have to take into account that the (co)tangent space is split into two-dimensional subspaces defined by vectors - $e$ and $\hat{e}$, to which we have to add $\hat{e}^0$ in odd dimensions. 

Analogously to formulas in section \ref{sec:RiemannWeightSubspaces} we can determine decomposition of the (221) diagram:
\begin{small}
\begin{align}
\mathrm{Res}[e, \hat{e}, \hat{e}^0](221)
	&\cong \mathrm{Res}[e, \hat{e}](221)
		\oplus \mathrm{Res}_0[e, \hat{e}, \hat{e}^0](221), \nonumber
	\\
	\mathrm{Res}[e, \hat{e}](221)
	&\cong [(221,0,0) \oplus (0,221,0) \oplus (22,1,0) \oplus (1,22,0) \nonumber
	\\ & \quad {}
		\oplus (211,1,0) \oplus (1,211,0) \oplus (111,11,0) \oplus (11,111,0) \nonumber
	\\ & \quad {}
		\oplus (21,11,0) \oplus (11,21,0) \oplus (21,2,0) \oplus (2,21,0)], \label{eq:YoungDecOf221}
	\\
\mathrm{Res}_0[e, \hat{e}, \hat{e}^0](221)
	&\cong [(2 2, 0,1) \oplus (0, 2 2,1) \oplus (2 1, 1,1) \oplus (1, 2 1,1) \oplus (1 1, 1 1,1) \oplus (2, 2,1)] \nonumber
	\\ &\quad {}
		\oplus [(0,211,1) \oplus (211,0,1)\oplus (111,1,1) \oplus (11,11,1) \nonumber
	\\ &\quad {}
		 \oplus (11,2,1) \oplus (2,11,1) \oplus (1,111,1) \oplus (1,21,1) \oplus (21,1,1)] \nonumber
	\\ &\quad {}
		\oplus [(21, 0,2) \oplus (0, 21,2) \oplus (11,1,2) \oplus (1,11,2) \oplus (2,1,2) \oplus (1,2,2)], \nonumber
\end{align}
\end{small}
where the first summand corresponds to those sectors present in all dimensions and the second one to the extra sectors present in odd dimensions, when $\eps=1$.

By the same logic as in section~\ref{sec:RiemannWeightSubspaces}, we define the isomorphisms 
\begin{equation} \label{eq:P-Bianchi}
	\mathfrak{P}^B \colon \mathfrak{T}^B
		= \bigoplus_{(\lambda)} \mathfrak{T}_{(\lambda)}
	\to \mathfrak{B} , \quad
	\bar{\mathfrak{P}}^B \colon \mathfrak{B} \to \mathfrak{T}^B ,
\end{equation}
where $\mathfrak{B}$ denotes the space of tensors of Bianchi symmery type (221) with respect to $GL(D)$ and $\mathfrak{T}_{(\lambda)}$ is the $GL(n)\times GL(n) \times GL(\eps)$ representation corresponding to the tuple of Young diagrams $(\lambda)$. We will use some of the components of $\mathfrak{P}^B$ to explicitly project onto the corresponding $\mathfrak{T}_{(\lambda)}$, but not all of them. Those that are needed are listed explicitly in the next subsections.
We will also not need a complete list of the analogs of the diagonal maps $\mathfrak{D}^{(\lambda)}$, so we do not define them here.

\subsection{The part without \texorpdfstring{$\hat{e}^0$}{e0}} \label{sec:CalculationsNoe0}

We initiate our study by examining the part of the second Bianchi identity, which does not contain $\hat{e}^0$ and is present in all dimensions.
Up to this point, we have made of the invariance of our calculations under the $U(1)^n \rtimes S_n$ stabilizer subgroup. Now for the first time, we will find that this stabilizer subgroup must be further reduced to $S_n$, as is evidenced by a broken $e \leftrightarrow \hat{e}$ interchange symmetry in the following formulas. The symmetry is broken by the use of the covariant derivative in~\eqref{eq:IIBianchiIdentity}, which as can be seen explicitly from proposition~\ref{lem:RotOneForms} is not symmetric under the $e \leftrightarrow \hat{e}$ interchange.

We calculated that the expansion of~\eqref{eq:IIBianchiIdentity} contains only the following types of combinations of the basis vectors, up to permutations:
\[
e \otimes \hat{e} \otimes \hat{e} \otimes \hat{e} \otimes \hat{e}, \quad
e \otimes e \otimes e \otimes \hat{e} \otimes \hat{e}, \quad
e \otimes e \otimes e \otimes e \otimes e,
\]
so from the decomposition \eqref{eq:YoungDecOf221} of $\mathrm{Res}[e, \hat{e}](221)$ the only relevant subspaces are
\begin{align*}
(221, 0) \oplus (1, 22) \oplus (1, 211) \oplus (111, 11) \oplus (21, 11) \oplus (21, 2).
\end{align*}
To write the projectors explicitly, let us extend the earlier notation for basis monomials to
\[
	E^{\tilde{\lambda}\tilde{\mu}\tilde{\nu}\tilde{\rho}\tilde{\sigma}}_{iabcd}
	= \tilde{e}^{\lambda}_i \tilde{e}^{\mu}_a \tilde{e}^{\nu}_b \tilde{e}^{\rho}_c \tilde{e}^{\sigma}_d,
\]
where an upper index slot like $\tilde{\mu}$ could now stand for $\mu$, $\hat{\mu}$, or $0$, meaning that the corresponding $\tilde{e}^{\mu}_a$ is replaced by $e^{\mu}_a$, $\hat{e}^{\mu}_a$ or $\hat{e}^0_a$, respectively.
Then
\begin{small}
\begin{align*} 
(P^{(221,0)})^{\mu\nu\rho\sigma\lambda}_{iabcd} &= P_{\,\young(ac,bd,i)} P_{\,\young(\nu\sigma,\rho\lambda,\mu)}\, E^{{\mu}{\nu}{\rho}{\sigma}{\lambda}}_{iabcd} , &
(P^{(1,22)})^{\mu\nu\rho\sigma\lambda}_{iabcd} &= P_{\,\young(ac,bd,i)} P_{\,\young(\nu\sigma,\rho\lambda)}\, E^{{\mu}{\hat{\nu}}{\hat{\rho}}{\hat{\sigma}{\hat{\lambda}}}}_{iabcd} , \nonumber \\
(P^{(1,211)})^{\mu\nu\rho\sigma\lambda}_{iabcd} &= P_{\,\young(ac,bd,i)} P_{\,\young(\nu\sigma,\rho,\lambda)}\, E^{{\hat{\lambda}}{\hat{\nu}}{\hat{\rho}}{\hat{\sigma}}{\mu}}_{iabcd} , &
(P^{(111,11)})^{\mu\nu\rho\sigma\lambda}_{iabcd} &= P_{\,\young(ac,bd,i)} P_{\,\young(\mu,\nu,\rho)} P_{\,\young(\sigma,\lambda)}\, E^{{\mu}{\nu}{\rho}{\hat{\sigma}{\hat{\lambda}}}}_{iabcd}, \\
\\
(P^{(21,11)})^{\mu\nu\rho\sigma\lambda}_{iabcd} &= P_{\,\young(ac,bd,i)} P_{\,\young(\mu\rho,\nu)} P_{\,\young(\sigma,\lambda)}\, E^{{\hat{\lambda}}{\mu}{\nu}{\rho}{\hat{\sigma}}}_{iabcd} , &
(P^{(21,2)})^{\mu\nu\rho\sigma\lambda}_{iabcd} &= P_{\,\young(ac,bd,i)} P_{\,\young(\mu\rho,\nu)} P_{\,\young(\sigma\lambda)}\, E^{{\hat{\lambda}}{\mu}{\nu}{\rho}{\hat{\sigma}}}_{iabcd} . \nonumber
\end{align*}
\end{small}
Since the basis $e^\mu, \hat{e}^\mu, \hat{e}^0$ is \emph{orthogonal}, we can take the reverse embedding maps $(E^{(\lambda)})^{\mu\nu\rho\sigma\lambda}_{iabcd}$ to have exactly the same tensor components as the projection maps $(P^{(\lambda)})^{\mu\nu\rho\sigma\lambda}_{iabcd}$.

\subsubsection*{Projected equations}

In order to simplify the projected equations, we introduce the following shorthand notation
\begin{align}
\begin{split} \label{eq:AuxiliaryTensors}
t_{\nu\rho\mu} &\equiv \tfrac{1}{2}M_\nu \partial_\nu \Omega_{\rho\mu} + \Omega_{\rho \nu} U_{\mu\nu} + \Omega_{\mu\rho} V_{\nu\rho}, \\
F_{\mu\nu\rho} &\equiv \Omega_{\mu\nu} U_{\mu\rho} + \Omega_{\nu \rho} V_{\rho\mu} + (\alpha_{\mu\nu}-\alpha_{\mu\rho}) U_{\nu\rho} + (\alpha_{\nu}^{(\epsilon)}-\alpha_{\rho}^{(\epsilon)}) U_{\nu\rho}, \\
T_{\mu\nu\rho} &\equiv  \tfrac{1}{2}M_{\mu}\partial_\mu \alpha_{\nu\rho} - \alpha_{\mu \rho} V_{\mu\nu} + \alpha_{\nu \rho} V_{\mu\rho} + M_{\mu}\partial_\mu \alpha_{\rho}^{(\epsilon)} + (\alpha_{\rho}^{(\epsilon)}-\alpha_{\mu}^{(\epsilon)}) V_{\mu\rho}, \\
f_{\rho\mu\nu} &\equiv \Omega_{\mu\rho} U_{\nu\rho} - \Omega_{\nu \rho} U_{\mu\rho} + \Omega_{\mu\nu} V_{\rho\nu} - \Omega_{\mu \nu} V_{\rho\mu}.
\end{split}
\end{align}
The following identities are useful in the simplifications 
\begin{align*}
f_{\rho\mu\nu} &= f_{\rho[\mu\nu]}, \qquad \Leftrightarrow \qquad f_{\rho(\mu\nu)} = 0, \\
2t_{\nu(\rho\mu)} &= M_\nu \partial_\nu \Omega_{\rho\mu} + \Omega_{\rho \nu} U_{\mu\nu} + \Omega_{\mu\rho} V_{\nu\rho} + \Omega_{\mu \nu} U_{\rho\nu} + \Omega_{\mu\rho} V_{\nu\mu}.
\end{align*}
Than the projections are
\allowdisplaybreaks
{\small
\begin{align}
P^{(111,11)}: \; 0 &= P_{\,\young(\mu,\nu,\rho)} P_{\,\young(\sigma,\lambda)} \, \big[ \delta_{\mu\sigma} \delta_{\lambda\rho} t_{\nu\rho\mu} \big]
	\notag \\
	& = \tfrac{1}{6}\big(t_{\rho(\mu\nu)}(\delta_{\mu\lambda} \delta_{\nu\sigma} -\delta_{\mu\sigma} \delta_{\lambda\nu}) + t_{\nu(\mu\rho)} (\delta_{\mu\sigma} \delta_{\lambda\rho}-\delta_{\mu\lambda} \delta_{\rho\sigma}) \nonumber \\ 
& \qquad + t_{\mu(\nu\rho)} (\delta_{\rho\sigma} \delta_{\lambda\nu} - \delta_{\rho\lambda} \delta_{\nu\sigma})\big), \label{eq:ProjectionsB2-1} \\
P^{(1,211)}: \; 0 &= P_{\,\young(\nu\sigma,\rho,\lambda)} \, \big[ \delta_{\mu\rho} \delta_{\lambda\sigma} F_{\lambda\mu\nu} \big]
	\notag \\
	& = \tfrac{1}{8}\big(F_{\sigma\mu\rho}(\delta_{\mu\lambda} \delta_{\nu\sigma} -\delta_{\lambda\sigma} \delta_{\mu\nu}) + F_{\sigma\mu\nu} (\delta_{\lambda\sigma} \delta_{\mu\rho}-\delta_{\mu\lambda} \delta_{\rho\sigma}) \nonumber \\ 
& \qquad + F_{\sigma\mu\lambda} (\delta_{\mu\nu} \delta_{\rho\sigma} - \delta_{\mu\rho} \delta_{\nu\sigma})\big), \label{eq:ProjectionsB2-2} \\
P^{(221,0)}: \; 0 &= P_{\,\young(\nu\sigma,\rho\lambda,\mu)} \, \big[ \delta_{\nu\sigma} \delta_{\lambda\rho} T_{\mu\nu\rho} \big]
	\notag \\
	& = \tfrac{1}{6}\big(T_{\mu(\nu\rho)}(\delta_{\lambda\rho} \delta_{\nu\sigma} -\delta_{\lambda\nu} \delta_{\rho\sigma}) + T_{\rho(\mu\nu)} (\delta_{\lambda\nu} \delta_{\mu\sigma}-\delta_{\mu\lambda} \delta_{\nu\sigma}) \nonumber \\ 
& \qquad + T_{\nu(\mu\rho)} (\delta_{\mu\lambda} \delta_{\rho\sigma} - \delta_{\lambda\rho} \delta_{\mu\sigma})\big), \label{eq:ProjectionsB2-3} \\
P^{(1,22)}: \; 0 &= P_{\,\young(\nu\sigma,\rho\lambda)} \, \big[ -\delta_{\nu\sigma} \delta_{\lambda\rho} T_{\mu\nu\rho} - 3\delta_{\mu\sigma} \delta_{\lambda\rho} F_{\rho\mu\nu}  \big]
	\notag \\
	& = \tfrac{1}{2}T_{\mu(\nu\rho)}(\delta_{\lambda\nu} \delta_{\rho\sigma} -\delta_{\nu\sigma} \delta_{\lambda\rho}) + \tfrac{3}{8}\Big( F_{\rho\mu\nu} (\delta_{\lambda\mu} \delta_{\rho\sigma}-\delta_{\mu\sigma} \delta_{\lambda\rho}) \nonumber \\ 
& \qquad + F_{\nu\mu\rho} (\delta_{\mu\sigma} \delta_{\lambda\nu} - \delta_{\lambda\mu} \delta_{\nu\sigma}) + F_{\lambda\mu\sigma} (\delta_{\mu\rho} \delta_{\lambda\nu} - \delta_{\mu\nu} \delta_{\lambda\rho}) \nonumber \\
& \qquad + F_{\sigma\mu\lambda} (\delta_{\mu\nu} \delta_{\rho\sigma} - \delta_{\mu\rho} \delta_{\nu\sigma}) \Big), \label{eq:ProjectionsB2-4} \\
P^{(21,11)}: \; 0 &= P_{\,\young(\mu\rho,\nu)} P_{\,\young(\sigma,\lambda)} \,\big[ 4\delta_{\mu\sigma} \delta_{\lambda\nu} t_{\rho(\mu\nu)} + 12\delta_{\mu\sigma} \delta_{\lambda\nu} f_{\rho\mu\nu} + 3\delta_{\mu\rho} \delta_{\lambda\nu} F_{\mu\nu\sigma} \big]
	\notag \\
	& = \tfrac{4}{3}t_{\rho(\mu\nu)}(\delta_{\lambda\nu} \delta_{\mu\sigma} -\delta_{\nu\sigma} \delta_{\lambda\mu}) + \tfrac{2}{3} (t_{\nu(\mu\rho)} + 3f_{\nu\rho\mu})(\delta_{\lambda\rho} \delta_{\mu\sigma}-\delta_{\rho\sigma} \delta_{\lambda\mu}) \nonumber \\ 
& \qquad + \tfrac{2}{3} (t_{\mu(\nu\rho)} + 3f_{\mu\rho\nu})(\delta_{\rho\sigma} \delta_{\lambda\nu} - \delta_{\lambda\rho} \delta_{\nu\sigma}) + F_{\rho\lambda\sigma} (\delta_{\lambda\nu} \delta_{\mu\rho} - \delta_{\lambda\mu} \delta_{\nu\rho}) \nonumber \\
& \qquad + F_{\rho\sigma\lambda} (\delta_{\mu\sigma} \delta_{\nu\rho} - \delta_{\nu\sigma} \delta_{\mu\rho}), \label{eq:ProjectionsB2-5} \\
P^{(21,2)}: \; 0 &= P_{\,\young(\mu\rho,\nu)} P_{\,\young(\sigma\lambda)} \,\big[ 12\delta_{\mu\sigma} \delta_{\lambda\nu} t_{\rho(\mu\nu)} + 4\delta_{\mu\sigma} \delta_{\lambda\nu} f_{\rho\mu\nu} + 3\delta_{\mu\rho} \delta_{\lambda\nu} F_{\mu\nu\sigma} - 2\delta_{\mu\nu} \delta_{\lambda\sigma} T_{\rho\mu\lambda}
	\notag \\
& \qquad \qquad - 2\delta_{\nu\rho} \delta_{\lambda\sigma} T_{\mu\lambda\nu} \big]
	\notag \\
	& = -\tfrac{2}{3}(3t_{\nu(\mu\rho)} + f_{\nu\rho\mu})(\delta_{\lambda\rho} \delta_{\mu\sigma} + \delta_{\lambda\mu} \delta_{\rho\sigma}) + \tfrac{2}{3} (3t_{\mu(\nu\rho)} + f_{\mu\rho\nu})(\delta_{\lambda\rho} \delta_{\nu\sigma} + \delta_{\lambda\nu} \delta_{\rho\sigma}) \nonumber \\ 
& \qquad + \tfrac{4}{3} f_{\rho[\mu\nu]}(\delta_{\mu\sigma} \delta_{\lambda\nu} + \delta_{\lambda\mu} \delta_{\nu\sigma}) + F_{\rho\lambda\sigma} (\delta_{\lambda\nu} \delta_{\mu\rho} - \delta_{\lambda\mu} \delta_{\nu\rho}) \nonumber \\
& \qquad + F_{\rho\sigma\lambda} (\delta_{\nu\sigma} \delta_{\mu\rho} - \delta_{\mu\sigma} \delta_{\nu\rho}) + \tfrac{4}{3}T_{\nu(\mu\lambda)}\delta_{\lambda\sigma}\delta_{\mu\rho} - \tfrac{4}{3}T_{\mu(\nu\lambda)}\delta_{\lambda\sigma}\delta_{\nu\rho}. \label{eq:ProjectionsB2-6}
\end{align}
}
\allowdisplaybreaks[0]

Because of the presence of may $\delta_{\mu\nu}$ factors, a general approach to solve this system is to break it into diagonals using the maps \eqref{eq:DiagonalMaps} as in section~\ref{sec:DecompositionIntoDiagonals} and simplify until a smaller set of equations is obtained, which imply the original system. Without going into much detail, the smaller system that we identified is, for $\mu \neq \nu \neq \rho$,
\begin{equation} \label{eq:ConstraintsBianchi}
\begin{aligned}
f_{\mu\nu\rho} &= 0, &
F_{\mu\nu\rho} &= 0, \\
2 T_{\mu(\mu\nu)} + 3 F_{\nu\mu\mu} &= 0, &
t_{\mu(\mu\nu)} + f_{\mu\mu\nu} + \tfrac{1}{2} F_{\mu\nu\mu} &= 0, \\
T_{\mu(\nu\rho)} &= 0, & 
t_{\mu(\nu\rho)} &= 0,  \\
3t_{\mu\nu\nu} - T_{\mu\nu\nu} &= 0. & &
\end{aligned}
\end{equation}
Explicit substitution of the conditions \eqref{eq:ConstraintsBianchi} into the original system \eqref{eq:ProjectionsB2-1}--\eqref{eq:ProjectionsB2-6} and subsequent simplification shows that it is identically satisfied.%
	\footnote{It is worth noting that the conditions~\eqref{eq:ConstraintsBianchi} are complicated by the fact that they apply only when $\mu \neq \nu \neq \rho$. The way to deal with that is to decompose each of the auxiliary tensors~\eqref{eq:AuxiliaryTensors} into diagonals as in~\eqref{eq:Decomposition3Tensor}, which decouples~\eqref{eq:ConstraintsBianchi} so that they can be used without the index restrictions.}

\subsubsection*{Investigation of the constraints}

So far we haven't used any special property of the coefficients $\alpha_{\mu\nu}, \Omega_{\mu\nu}$, only definition of the tensors \eqref{eq:AuxiliaryTensors}. To investigate the constraints \eqref{eq:ConstraintsBianchi} we use the special form of these coefficients following from the integrability condition \eqref{eq:ExplicitCoefficientsIntCon}. 

The first two equations of \eqref{eq:ConstraintsBianchi} for $F_{\mu\nu\rho}$ and $f_{\mu\nu\rho}$ after we substitute for $U, V$ from \eqref{eq:FunctionsUV} and for $\Omega_{\mu\nu}, \alpha_{\mu\nu}, \alpha^{(\epsilon)}_{\mu}$ from \eqref{eq:ExplicitCoefficientsIntCon} yield trivial identities ($\mu \neq \nu \neq \rho$)
\begin{align*}
F_{\mu\nu\rho} &= \Omega_{\mu\nu} U_{\mu\rho} + \Omega_{\nu \rho} V_{\rho\mu} + \big(\alpha_{\mu\nu} + \alpha_{\mu}^{(\epsilon)} + \alpha_{\nu}^{(\epsilon)} - (\alpha_{\mu\rho} + \alpha_{\mu}^{(\epsilon)} + \alpha_{\rho}^{(\epsilon)})\big) U_{\nu\rho} \equiv 0, \\
f_{\rho\mu\nu} &= \Omega_{\mu\rho} U_{\nu\rho} - \Omega_{\nu \rho} U_{\mu\rho} + \Omega_{\mu\nu} (V_{\rho\nu} - V_{\rho\mu}) \equiv 0.
\end{align*}
Let us now concentrate on the third equation. Substitution for the tensors $T$ and $F$ leads to
\begin{align*}
2 T_{\mu(\mu\nu)} + 3 F_{\nu\mu\mu} &= M_\mu \partial_\mu(\alpha_{\mu\nu} + \alpha^{(\epsilon)}_\mu + \alpha^{(\epsilon)}_\nu) + (\alpha_{\mu\nu} + \alpha^{(\epsilon)}_\nu + \alpha^{(\epsilon)}_\mu) V_{\mu\nu}  \\
& \qquad + 3\Omega_{\mu\mu} V_{\mu\nu} + 3\Omega_{\mu\nu} U_{\nu\mu}.
\end{align*}
The derivative in the first term explicitly reads
\begin{align*}
\partial_\mu(\alpha_{\mu\nu} + \alpha^{(\epsilon)}_\mu + \alpha^{(\epsilon)}_\nu) &= \frac{d_\mu + x_\mu\partial_\mu d_\mu }{x_\mu^2 - x_\nu^2} - 2\frac{x_\mu^2 d_\mu + x_\nu^2 d_\mu - 2x_\mu x_\nu d_\nu}{(x_\mu^2 - x_\nu^2)^2}.
\end{align*}	
After substitution of all necessary quantities and algebraic manipulations we get 
\begin{align*}
2 T_{\mu(\mu\nu)} + 3 F_{\nu\mu\mu} &=\frac{M_\mu x_\mu}{x^2_\mu - x^2_\nu} \left( 3\Omega_{\mu\mu} + \partial_\mu d_\mu - 2\varepsilon\frac{d_\mu}{x_\mu}\right). 
\end{align*}
This combination vanishes if and only if the expression in the brackets is zero. We get a relation between \emph{diagonal} $\Omega_{\mu\mu}$ and the derivative of $d$-parameters
\begin{align} \label{eq:DiagonalOmega}
\Omega_{\mu\mu} &= -\frac{1}{3}\partial_\mu d_\mu + \frac{2}{3}\varepsilon\frac{d_\mu}{x_\mu}.
\end{align}

Further, we consider the fourth equation, which can be recast into
\begin{align*}
2t_{\mu(\mu\nu)} + 2f_{\mu\mu\nu} + F_{\mu\nu\mu} &= M_\mu \partial_\mu \Omega_{\mu\nu} + 3\Omega_{\mu \mu} U_{\nu\mu} + 3\Omega_{\mu\nu} V_{\mu\nu} + (\alpha_{\mu \nu} + \alpha_{\nu}^{(\epsilon)} - \alpha_{\mu}^{(\epsilon)})\big) U_{\nu\mu} = 0.
\end{align*}
The derivative of $\Omega_{\mu\nu}$ can be written as follows
\begin{align} \label{eq:DmuOmegaMuNU}
\partial_\mu \Omega_{\mu\nu} &= -2\frac{x_\mu}{x_\mu^2 - x_\nu^2}\Omega_{\mu\nu} + \frac{d_\nu + x_\mu\partial_\mu d_\nu  - x_\nu\partial_\mu d_\mu }{x_\mu^2 - x_\nu^2}.
\end{align}
With the help of this expression, the equation is simplified to
\begin{align*}
2t_{\mu(\mu\nu)} + 2f_{\mu\mu\nu} + F_{\mu\nu\mu} &= \frac{M_\mu x_\mu}{x_\mu^2 - x_\nu^2}\big( \partial_\mu d_\nu + 2\Omega_{\mu\nu} \big).
\end{align*}
We obtained a non-trivial relation between \emph{off-diagonal partial derivatives} of $d$s and parameters $\Omega$
\begin{align} \label{eq:dEquationOrig}
 \partial_\mu d_\nu = -  2\Omega_{\mu\nu}, \qquad \forall \mu \neq \nu.
\end{align}
When we further substitute for $\Omega_{\mu\nu}$ we get a \emph{differential equation} for $d_\mu$
\begin{align} \label{eq:dEquationFullForm}
\partial_\mu d_\nu &= -  2\frac{x_\mu d_\nu - x_\nu d_\mu}{x^2_\mu - x^2_\nu}, \qquad \forall \mu \neq \nu.
\end{align}
The rest of the equations \eqref{eq:ConstraintsBianchi} does not produce any new relations.

\subsection{The part with \texorpdfstring{$\hat{e}^0$}{e0}} \label{sec:CalculationsWithe0}

We continue to explore the part included only in odd dimensions, which contains $\hat{e}^0$. We empirically found that the only terms in the second Bianchi identity that contain $\hat{e}^0$ are of the following types, up to permutation (but as explained in the previous section, not symmetric under $e \leftrightarrow \hat{e}$ interchange):
\[
e \otimes e \otimes e \otimes \hat{e} \otimes \hat{e}^0, \quad
e \otimes \hat{e} \otimes \hat{e} \otimes \hat{e} \otimes \hat{e}^0, \quad
e \otimes \hat{e} \otimes \hat{e} \otimes \hat{e}^0 \otimes \hat{e}^0, \quad
e \otimes e \otimes e \otimes \hat{e}^0 \otimes \hat{e}^0.
\]
Now, the relevant subspaces from the decomposition \eqref{eq:YoungDecOf221} of $\mathrm{Res}_0[e, \hat{e}, \hat{e}^0](221)$ are:
{\small
\begin{align*}
2\times (21,1,1) \oplus (111,1,1) \oplus 2\times (1,21,1) \oplus (1,111,1) \oplus (1,11,2) \oplus (1,2,2) \oplus (21,0,2).
\end{align*}}
The projectors are then
\begin{align*}
(P^{(21,1,1)})^{\mu\nu\rho\sigma}_{iabcd} &= P_{\,\young(ac,bd,i)} P_{\,\young(\mu\nu,\rho)}\, E_{iabcd}^{\hat{\sigma}\mu\rho\nu 0} , &
(P^{(1,21,1)})^{\mu\nu\rho\sigma}_{iabcd} &= P_{\,\young(ac,bd,i)} P_{\,\young(\mu\nu,\rho)} \, E_{iabcd}^{0\hat{\mu}\hat{\rho}\hat{\nu}\sigma}, \\
(P^{(21,1,1)}_x)^{\mu\nu\rho\sigma}_{iabcd} &= P_{\,\young(ac,bd,i)} P_{\,\young(\mu\nu,\rho)} \, E_{iabcd}^{0\mu\rho\nu\hat{\sigma}}, & 
(P^{(1,21,1)}_x)^{\mu\nu\rho\sigma}_{iabcd} &= P_{\,\young(ac,bd,i)} P_{\,\young(\mu\nu,\rho)} \, E_{iabcd}^{\sigma\hat{\mu}\hat{\rho}\hat{\nu} 0}, \\
(P^{(111,1,1)})^{\mu\nu\rho\sigma}_{iabcd} &= P_{\,\young(ac,bd,i)} P_{\,\young(\mu,\nu,\rho)} \, E_{iabcd}^{\rho\mu\nu\hat{\sigma} 0}, &
(P^{(1,111,1)})^{\mu\nu\rho\sigma}_{iabcd} &= P_{\,\young(ac,bd,i)} P_{\,\young(\mu,\nu,\rho)} \, E_{iabcd}^{\hat{\rho}\hat{\mu}\hat{\nu}\sigma 0}, \\
(P^{(1,11,2)})^{\mu\nu\rho}_{iabcd} &= P_{\,\young(ac,bd,i)} P_{\,\young(\nu,\rho)} \, E_{iabcd}^{\hat{\rho}\mu 0\hat{\nu} 0} , &
(P^{1,2,2})^{\mu\nu\rho}_{iabcd} &= P_{\,\young(ac,bd,i)} P_{\,\young(\nu\rho)} \, E_{iabcd}^{\mu\hat{\nu} 0\hat{\rho} 0} , \\
(P^{(21,0,2)})^{\mu\nu\rho}_{iabcd} &= P_{\,\young(ac,bd,i)} P_{\,\young(\mu\nu,\rho)}\, E_{iabcd}^{\mu 0\rho 0\nu} . & &
\end{align*}

The equations that emerge from projections of the second Bianchi identity are much shorter and simpler than in the previous case without $\hat{e}^0$. In our specific case, projections $P^{(1,111,1)}, P^{(111,1,1)}$ yield zero and not all of the remaining seven projections are independent. The four independent equations can be written in the manner outlined below
\begin{align*}
P^{(21,1,1)}: 0 &= (\Omega_{\mu\rho} s_\mu + \alpha_{\mu\rho} s_\rho + \alpha^{(\epsilon)}_\rho s_\rho )\delta_{\mu\nu}\delta_{\rho\sigma} - (\mu \leftrightarrow \rho), \\
P^{(1,11,2)}: 0 &= \left( \frac{M_\nu}{x_\nu}\Omega_{\mu\nu} + (\alpha^{(\epsilon)}_\mu - \alpha^{(\epsilon)}_\nu) U_{\mu\nu} \right)\delta_{\mu\rho} - (\nu \leftrightarrow \rho), \\
P^{(1,2,2)}: 0 &= \frac{1}{3}\left(-M_\mu \partial_\mu \alpha^{(\epsilon)}_\nu + \alpha^{(\epsilon)}_\mu \frac{M_\mu}{x_\mu} + (\alpha^{(\epsilon)}_\mu - \alpha^{(\epsilon)}_\nu) V_{\mu\nu} + \alpha_{\mu\nu}\frac{M_\mu}{x_\mu} \right)\delta_{\nu\rho} \\
& \quad + \frac{1}{2}\left( \frac{M_\nu}{x_\nu}\Omega_{\mu\nu} + (\alpha^{(\epsilon)}_\mu - \alpha^{(\epsilon)}_\nu) U_{\mu\nu} \right)\delta_{\mu\rho} 
- \frac{1}{2}\left( \frac{M_\rho}{x_\rho}\Omega_{\mu\rho} + (\alpha^{(\epsilon)}_\mu - \alpha^{(\epsilon)}_\rho) U_{\mu\rho} \right)\delta_{\mu\nu}, \\
P^{(21,0,2)}: 0&= \left(-M_\mu \partial_\mu \alpha^{(\epsilon)}_\nu + \alpha^{(\epsilon)}_\mu \frac{M_\mu}{x_\mu} + (\alpha^{(\epsilon)}_\mu - \alpha^{(\epsilon)}_\nu) V_{\mu\nu} + \alpha_{\mu\nu}\frac{M_\mu}{x_\mu}\right)\delta_{\nu\rho} -  (\mu \leftrightarrow \rho), 
\end{align*}
where $(\mu \leftrightarrow \rho)$ denotes the same expression with swapped indices. Since these equations are much simpler than the previously investigated ones, there is no need for introduction of auxiliary tensors and we can approach their solution directly. However, when we use the obtained constraints \eqref{eq:DiagonalOmega} and \eqref{eq:dEquationOrig} the projections become trivial. Thus, they impose \emph{no other} restrictions. In odd dimensions the second Bianchi identity does not entail additional constraints.

\subsection{Conclusion of the calculation}

Let us summarize the results of sections \ref{sec:CalculationsNoe0} and \ref{sec:CalculationsWithe0} in a theorem. 

\begin{theorem}[II. Bianchi identity] \label{the:IIBianchiIdentity}
Consider a Kerr-NUT-(A)dS
spacetime~\eqref{eq:HigherDimensionalKNAdSmetric}, so that its Levi-Civita
connection is given by proposition~\eqref{lem:RotOneForms}.
Let $\mathcal{R}_{abcd}$ be a Riemann-symmetric tensor of totally zero weight with respect to the stabilizer group $S_n\rtimes U(1)^n$, parametrized as in~\eqref{eq:RiemannInBasis}, with the coefficients $\Omega_{\mu\nu}, \alpha_{\mu\nu}, \alpha^{(\epsilon)}_\mu$ parametrized by a vector $d_\mu$ as in \eqref{eq:ExplicitCoefficientsIntCon}.
Then the second Bianchi identity $\nabla_{[e} \mathcal{R}_{ab]cd} = 0$ implies the following additional constraints ($\forall \mu \neq \nu$):
\begin{align*}
\Omega_{\mu\mu} &= -\frac{1}{3}\partial_\mu d_\mu + \frac{2}{3}\varepsilon\frac{d_\mu}{x_\mu}, \tag{\ref*{eq:DiagonalOmega}} \\
\partial_\mu d_\nu &= -  2\frac{x_\mu d_\nu - x_\nu d_\mu}{x^2_\mu - x^2_\nu}. \tag{\ref*{eq:dEquationFullForm}}
\end{align*}
\end{theorem}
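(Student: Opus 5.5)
The theorem collects the computations of sections~\ref{sec:CalculationsNoe0} and~\ref{sec:CalculationsWithe0}, so the proof will organize them into one argument. I will substitute the parametrization~\eqref{eq:RiemannInBasis} into $\nabla_{[e}\mathcal{R}_{ab]cd}$ and expand with the Darboux-frame connection of proposition~\ref{lem:RotOneForms}. Since the background functions $x_\mu$ and all coefficients are constant along the Killing directions, the derivative of a coefficient is $\nabla f=\sum_\mu M_\mu\partial_\mu f\, e^\mu$. The derivatives of the basis tensors $h_\mu,Q_\mu,Q_\epsilon$ are obtained by inserting the rotation one-forms $\upomega_{ab}$ into $\nabla e^a=-\upomega^a{}_b e^b$.

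The resulting $5$-tensor lies in the $(221)$ subspace. I will decompose it with the $GL(n)\times GL(n)\times GL(\eps)$ projectors $P^{(\lambda)}$ of~\eqref{eq:YoungDecOf221}. The vanishing of~\eqref{eq:IIBianchiIdentity} is equivalent to the vanishing of every projected component, because $\mathfrak{P}^B$ is a bijection. A direct inspection of which basis monomials can occur shows that only the six sectors listed in section~\ref{sec:CalculationsNoe0} and the seven sectors with $\hat{e}^0$ in section~\ref{sec:CalculationsWithe0} can be nonzero.

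In the $\hat{e}^0$-free part, the projected equations~\eqref{eq:ProjectionsB2-1}--\eqref{eq:ProjectionsB2-6} are written using the auxiliary tensors~\eqref{eq:AuxiliaryTensors}. I will decompose $t,T,F,f$ into diagonals as in~\eqref{eq:Decomposition3Tensor} and extract components with the maps~\eqref{eq:DiagonalMaps}. This should show that the system is equivalent to the reduced set~\eqref{eq:ConstraintsBianchi}, in both directions: the reduced set implies the full system by back-substitution, and conversely each reduced equation appears as a particular diagonal component.

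I will then insert the solution~\eqref{eq:ExplicitCoefficientsIntCon} of theorem~\ref{th:IntegrabilityCondition}, together with $U_{\mu\nu},V_{\mu\nu}$ from~\eqref{eq:FunctionsUV}.
\begin{itemize}
\item The equations $F_{\mu\nu\rho}=0$ and $f_{\mu\nu\rho}=0$ should hold identically. For three distinct indices the factors $M_\mu$ cancel, and what remains is a partial-fraction identity in $x_\mu^2-x_\nu^2$.
\item The combination $2T_{\mu(\mu\nu)}+3F_{\nu\mu\mu}$ reduces to $\frac{M_\mu x_\mu}{x_\mu^2-x_\nu^2}\bigl(3\Omega_{\mu\mu}+\partial_\mu d_\mu-2\eps d_\mu/x_\mu\bigr)$. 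This yields~\eqref{eq:DiagonalOmega}, using $M_\mu x_\mu\neq0$ on the nondegenerate domain.
\item The fourth equation, evaluated with~\eqref{eq:DmuOmegaMuNU}, reduces to $\frac{M_\mu x_\mu}{x_\mu^2-x_\nu^2}(\partial_\mu d_\nu+2\Omega_{\mu\nu})$. This gives~\eqref{eq:dEquationOrig}, and hence~\eqref{eq:dEquationFullForm}.
\item The remaining equations $T_{\mu(\nu\rho)}=0$, $t_{\mu(\nu\rho)}=0$ and $3t_{\mu\nu\nu}-T_{\mu\nu\nu}=0$ must then vanish identically once the two new relations are imposed. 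I will verify this by explicit substitution, keeping track of the factors $\partial_\mu d_\nu$ for $\mu\ne\nu$.
\end{itemize}

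In odd dimensions the four independent $\hat{e}^0$ projections $P^{(21,1,1)}$, $P^{(1,11,2)}$, $P^{(1,2,2)}$ and $P^{(21,0,2)}$ are checked in the same way. Substituting~\eqref{eq:ExplicitCoefficientsIntCon}, \eqref{eq:DiagonalOmega} and~\eqref{eq:dEquationOrig}, together with $s_\mu\propto 1/x_\mu$, should make each of them vanish, so they add no constraints. The projections $P^{(111,1,1)}$ and $P^{(1,111,1)}$ vanish identically.

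I expect the main obstacle to be the reduction of the $\hat{e}^0$-free system to~\eqref{eq:ConstraintsBianchi}. The constraints hold only for pairwise distinct indices, and the many Kronecker deltas mix diagonal and off-diagonal pieces. Equivalence therefore has to be proved component by component after the diagonal decomposition, starting from the lowest-rank diagonals. I would carry this out with computer algebra, as the paper does elsewhere.
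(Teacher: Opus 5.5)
Your proposal follows the paper's own argument step for step. You project $\nabla_{[e}\mathcal{R}_{ab]cd}$ onto the $(221)$ sectors, reduce the $\hat{e}^0$-free part to \eqref{eq:ConstraintsBianchi}, and insert \eqref{eq:ExplicitCoefficientsIntCon}. You then read off \eqref{eq:DiagonalOmega} and \eqref{eq:dEquationOrig} from the third and fourth reduced equations, and check that the remaining equations and the $\hat{e}^0$ sectors add nothing.

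One step needs more care than you (and the paper) give it. Put $X=\partial_\mu d_\mu+3\Omega_{\mu\mu}-2\varepsilon d_\mu/x_\mu$ and $Y=\partial_\mu d_\nu+2\Omega_{\mu\nu}$. The third equation necessarily contains $-x_\nu M_\mu\partial_\mu d_\nu/(x_\mu^2-x_\nu^2)$, coming from $\partial_\mu\alpha_{\mu\nu}$. The fourth necessarily contains $-x_\nu M_\mu\partial_\mu d_\mu/(x_\mu^2-x_\nu^2)$, coming from $\partial_\mu\Omega_{\mu\nu}$. Direct substitution, reading \eqref{eq:AuxiliaryTensors} with $\alpha_{\mu\mu}=0$ (it does not enter $\mathcal{R}$), gives
$2T_{\mu(\mu\nu)}+3F_{\nu\mu\mu}=\frac{M_\mu}{x_\mu^2-x_\nu^2}(x_\mu X-x_\nu Y)$
and
$2t_{\mu(\mu\nu)}+2f_{\mu\mu\nu}+F_{\mu\nu\mu}=\frac{M_\mu}{x_\mu^2-x_\nu^2}(x_\mu Y-x_\nu X)$.
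So each of your two quoted reductions holds only after the other one's conclusion has been imposed. As written, the derivation is circular.

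The fix is immediate. $x_\mu$ times the first expression plus $x_\nu$ times the second equals $M_\mu X$. Likewise, $x_\nu$ times the first plus $x_\mu$ times the second equals $M_\mu Y$. Both constraints therefore follow simultaneously from $M_\mu\neq 0$.
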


After we derived solution for off-diagonal coefficients $\Omega_{\mu\nu}, \alpha_{\mu\nu}$ and $\alpha^{(\epsilon)}_\mu$ from the integrability condition (theorem \ref{th:IntegrabilityCondition}), the question remained what further constraints which might restrict the Riemann tensor \eqref{eq:RiemannInBasis} to that of only Kerr-NUT-(A)dS form. The new differential constraint on the the vector of parameters $d_\mu$, namely~\eqref{eq:dEquationFullForm} does that job. Since the Kerr-NUT-(A)dS true Riemann tensor obviously satisfies the II. Bianchi identity, we know that the values~\eqref{eq:dAndPDQT} will satisfy~\eqref{eq:dEquationFullForm}. We will now verify that explicitly and moreover show that~\eqref{eq:dEquationFullForm} has no other solutions (theorem~\ref{th:SolutionOfdEquation}).

To summarize, the integrability condition together with the second Bianchi identity restricts the general Riemann tensor \eqref{eq:DecompositionOfRtoWeightSectors} directly to the Kerr-NUT-(A)dS form.

Equation~\eqref{eq:dEquationFullForm} does not fall into any class of PDEs with an established existence and uniqueness theory that we know of, with the exception of that considered by Darboux in a generalization of the integrability theorem of Frobenius (see~\cite{bjk2019} for details, where also Darboux's proof was extended to the $n$-dimensional case that we require). The earlier ad-hoc analysis in~\cite[section 2.8.3]{MatejovThesis} is consistent with Darboux's theory. We are spared applying this theory directly, because the equation can also be integrated exactly.

\newcommand{\del}{\partial}
\newcommand{\hdel}{\hat{\partial}}
\newcommand{\hpd}{\hat{\partial}}
\newcommand{\ka}{\kappa}
\newcommand{\la}{\lambda}

\begin{theorem} \label{th:SolutionOfdEquation}
Consider the equation~\eqref{eq:dEquationFullForm} for the vector of parameters $d_\mu$. Any function of the following form is a solution of~\eqref{eq:dEquationFullForm}:
\begin{equation} \label{eq:SolutionOfdEquation}
	d_\mu = \del_\mu D, \quad
	D = \sum_{\mu=1}^n \frac{X_\mu}{U_\mu} ,
\end{equation}
where $U_\mu$ is defined in~\eqref{eq:MetricFunctions} and $X_\mu = X_\mu(x_\mu)$ are arbitrary smooth functions of the individual indicated coordinates. Moreover, on any open, connected and simply-connected domain that avoids the partial diagonals (where $x_\mu=x_\nu$ for $\mu\ne\nu$) and the hyperplanes $x_\mu = 0$, every solution of~\eqref{eq:dEquationFullForm} must have the form~\eqref{eq:SolutionOfdEquation}.
\end{theorem}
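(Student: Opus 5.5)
The plan is to change variables so that \eqref{eq:dEquationFullForm} becomes a symmetric, hence closed, system. Then $d_\mu$ is a gradient, and the remaining question is to classify the scalar potentials.

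\emph{Step 1: change of variables.} Put $y_\mu = x_\mu^2$, so that $\pd_\mu = 2x_\mu\,\pd/\pd y_\mu$. This is legitimate because the domain avoids the hyperplanes $x_\mu=0$. Define $e_\mu \equiv d_\mu/(2x_\mu)$. Substituting into \eqref{eq:dEquationFullForm}, dividing by $4x_\mu x_\nu$ and using $x_\mu^2-x_\nu^2 = y_\mu-y_\nu$ gives
\begin{equation*}
\frac{\pd e_\nu}{\pd y_\mu} = \frac{e_\mu - e_\nu}{y_\mu - y_\nu}, \qquad \mu\neq\nu .
\end{equation*}
The right-hand side is symmetric under $\mu\leftrightarrow\nu$. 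Hence the $1$-form $\sum_\mu e_\mu\,\dd y_\mu$ is closed. On a simply-connected domain it is exact, so $e_\mu = \pd F/\pd y_\mu$ for some function $F$. Equivalently $d_\mu = \pd_\mu F$, which already yields the first half of \eqref{eq:SolutionOfdEquation} with $D=F$. The system is then equivalent to the Euler--Poisson--Darboux-type system
\begin{equation*}
(y_\mu-y_\nu)\,F_{,\mu\nu} = F_{,\mu} - F_{,\nu}, \qquad \mu\ne\nu ,
\end{equation*}
where $F_{,\mu} \equiv \pd F/\pd y_\mu$.

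\emph{Step 2: existence.} I would check that $F=\sum_\mu X_\mu/U_\mu$ solves this system for arbitrary one-variable functions $X_\mu$. By linearity it suffices to take one summand $X_\kappa(y_\kappa)/U_\kappa$ and verify the identity by direct differentiation of the factors $1/(y_\nu-y_\kappa)$. There are three index cases: $\kappa\notin\{\mu,\nu\}$, $\kappa=\mu$, and $\kappa=\nu$. This is routine partial-fraction algebra.

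\emph{Step 3: uniqueness, the main obstacle.} The difficulty is that the system prescribes only the mixed derivatives $\pd_\mu e_\nu$ with $\mu\ne\nu$; the diagonal derivatives $\pd_\mu e_\mu$ are free. So the system is not an ODE along coordinate paths, and its solution space should be parametrized by $n$ functions of one variable. I would treat it as a Darboux-type system in the sense of \cite{bjk2019}. The expected free data are $e_\mu$ restricted to the $\mu$-th coordinate line through a base point $y^0$. The uniqueness part of that theory then says that a solution whose data vanish is identically zero on a connected domain.

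Granting this, it remains to show that the family from Step 2 exhausts all possible data. For a given solution $e_\mu$, restrict to the $\mu$-th line, where only $y_\mu$ varies. The ansatz gives
\begin{equation*}
\frac{\pd}{\pd y_\mu}\Big(\sum_\kappa X_\kappa/U_\kappa\Big) = \frac{X_\mu'}{U_\mu} + X_\mu\,\pd_{y_\mu}(U_\mu^{-1}) + \sum_{\kappa\ne\mu} X_\kappa(y^0_\kappa)\,\pd_{y_\mu}(U_\kappa^{-1}).
\end{equation*}
Requiring this to equal $e_\mu$ on that line is a linear first-order ODE for $X_\mu(y_\mu)$ with nonvanishing leading coefficient $1/U_\mu$, since the domain avoids the partial diagonals. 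The coupling to the other $X_\kappa$ enters only through the constants $X_\kappa(y^0_\kappa)$. I would therefore fix these initial values arbitrarily, say to zero, and solve the $n$ decoupled ODEs.

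The difference between the given solution and this fitted ansatz then has vanishing data, so it vanishes identically. This yields $d_\mu=\pd_\mu D$ with $D$ of the form \eqref{eq:SolutionOfdEquation}; any additive constant in $D$ is irrelevant to $d_\mu$. If invoking \cite{bjk2019} proves awkward, the fallback is induction on $n$. For fixed $y_n$, the functions $e_1,\dots,e_{n-1}$ solve the same system in $n-1$ variables. The equations $\pd_n e_\nu = (e_n-e_\nu)/(y_n-y_\nu)$ for $\nu<n$ are genuine linear ODEs in $y_n$, given $e_n$. So vanishing data propagate from the slice $y_n=y_n^0$ to the whole domain, once $e_n$ is shown to vanish using its own equations in the other directions.
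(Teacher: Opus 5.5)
Your Steps 1 and 2 coincide with the paper's argument. Both use the substitution $y_\mu=x_\mu^2$, the same closedness/Poincar\'e-lemma step, and the same reduction to $\hat{\partial}_\mu\hat{\partial}_\nu[(y_\mu-y_\nu)F]=0$. Your existence check is equivalent to the paper's. Your normalization $e_\mu=d_\mu/(2x_\mu)$ even gives $d_\mu=\partial_\mu F$ exactly, where the paper gets $\tfrac12\partial_\mu D$.

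The genuine difference is the uniqueness step. The paper cites \cite{bjk2019} only to say Darboux's theory can be bypassed. It then integrates \eqref{eq:dequation3} by hand, by induction on $n$: it integrates once in $y_n$, uses that the resulting right-hand sides are polynomial in $y_n$, applies the inductive hypothesis to the top coefficient $\tilde D$, and assembles the potential $E$. You instead invoke Goursat/Darboux uniqueness for the first-order system in $e_\mu$, and show that the explicit family attains arbitrary data. That fitting step is correct and simpler than you make it. With $X_\kappa(y^0_\kappa)=0$, the ODE integrates to $X_\mu(y_\mu)=U_\mu|_{\text{line}}\int_{y^0_\mu}^{y_\mu}\phi_\mu$, where $\phi_\mu$ denotes the data. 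Your route explains why the general solution depends on exactly $n$ functions of one variable, and it needs no compatibility check, since uniqueness does not use it. Its cost is that the analytic core is outsourced to an external theorem, whereas the paper's argument is self-contained and constructive.

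Two caveats.

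First, your claim that vanishing data force vanishing \emph{on a connected domain} is false. The coordinate lines through $y^0$ need not reach the whole domain. For example, $X_1(y_1)/U_1$ with $X_1$ supported outside the range of the $1$-line has zero data but is nonzero. Your argument is valid on coordinate boxes $\prod_\mu I_\mu$, where uniqueness for this linear system is global.

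This is not a flaw of your method alone: the theorem itself fails on general simply connected domains. Take $n=2$ and a U-shaped domain in the $(y_1,y_2)$-plane with $y_1>y_2>0$: two vertical bars joined by a bottom strip. Let $G=\Phi(y_2)H(y_1)$, with $\Phi$ vanishing on the strip's $y_2$-range and $H$ switching from $0$ to $1$ between the bars. Then $\partial_{y_1}\partial_{y_2}G=0$, but $G$ is not of the form $Y_1(y_1)-Y_2(y_2)$. So $F=G/(y_1-y_2)$ is a solution not of the claimed form. The paper's proof silently uses the same box assumption, in ``functions annihilated by $\hat{\partial}_1\hat{\partial}_2$ are exactly $Y_1-Y_2$'' and in ``$E_\nu$ independent of $y_n$''.

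Second, your fallback induction is circular as written. The equation $\partial_\mu e_n=(e_\mu-e_n)/(y_\mu-y_n)$ involves $e_\mu$ off the slice $y_n=y^0_n$. You can repair it as follows:
\begin{enumerate}
\item Apply the $n=2$ Goursat uniqueness on each coordinate plane through $y^0$ spanned by the directions $\mu$ and $n$. This makes the $(n-1)$-variable data vanish on every slice $y_n=\text{const}$.
\item The inductive hypothesis then gives $e_1=\dots=e_{n-1}=0$.
\item Finally, $e_n$ satisfies homogeneous linear equations on each slice and vanishes on the $n$-line, so it vanishes everywhere.
\end{enumerate}
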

First, it is not immediately obvious that the Kerr-NUT-(A)dS values~\eqref{eq:dAndPDQT} of $d_\mu$ are in the form~\eqref{eq:SolutionOfdEquation} in odd dimensions, because of the term proportional to $\varepsilon$ in~\eqref{eq:MainMetricFunction}. However, this discrepancy is easily remedied via the identity
\[
	\frac{1}{A^{(n)}} = \frac{1}{\prod_{\nu=1}^n x_\nu^2}
	= \sum_{\mu=1}^n \frac{(-1)^{n-1}}{x_\mu^2} \frac{1}{\prod_{\nu=1,\nu\ne\mu}^n (x_\nu^2-x_\mu^2)} ,
\]
where the last equality follows from evaluating the contour integral around the origin of the function $R(z) = \frac{1}{z\prod_{\mu=1}^n (z-x_\mu^2)}$ in two different ways: the single residue inside the contour (left-hand side), and all the residues outside the contour (right-hand side).

Next, we make some simplifications by introducing the coordinates $y_\mu = x_\mu^2$ and the derivatives $\hdel_\mu = \del/\del y_\mu = \frac{1}{2x_\mu}\del_\mu$. Equation~\eqref{eq:dEquationFullForm} then takes the form
\begin{equation} \label{eq:dequation2}
\hdel_\mu (d_\nu/x_\nu) = \frac{(d_\mu/x_\mu) - (d_\nu/x_\nu)}{y_\mu-y_\nu}, \qquad \forall \, \mu\ne\nu.
\end{equation}
An immediate consequence is that $\del_\mu (d_\nu/x_\nu) - \del_\nu (d_\mu/x_\mu) = 0$, or equivalently (by the Poincar\'e lemma) $d_\mu/x_\mu = \hdel_\mu D$ for some smooth function $D = D(y)$. Plugging this form into~\eqref{eq:dequation2} simplifies to
\begin{equation} \label{eq:dequation3}
	\hdel_\mu \hdel_\nu [(y_\mu - y_\nu) D] = 0 .
\end{equation}
Theorem~\ref{th:SolutionOfdEquation} is then a corollary of the following
\begin{lemma}
The formula
\begin{equation} \label{eq:SolutionOfdEquation3}
	D = \sum_{\mu=1}^n \frac{Y_\mu(y_\mu)}{\prod_{\nu=1,\nu\ne\mu}^n (y_\nu - y_\mu)} ,
\end{equation}
where $Y_\mu(y_\mu)$ are arbitrary smooth functions,
solves~\eqref{eq:dequation3}. Moreover, on any open, connected and
simply-connected domain that avoids the partial diagonals (where $y_\mu=y_\nu$
for $\mu\ne\nu$), every solution of~\eqref{eq:dequation3} must have the
form~\eqref{eq:SolutionOfdEquation3}.
\end{lemma}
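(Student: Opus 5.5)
We would prove the two directions separately: sufficiency by a short direct computation, and necessity by induction on $n$.

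\emph{Sufficiency.} By linearity it is enough to check that each summand $D_\kappa = Y_\kappa(y_\kappa)/\prod_{\lambda\ne\kappa}(y_\lambda-y_\kappa)$ of~\eqref{eq:SolutionOfdEquation3} satisfies $\hdel_\mu\hdel_\nu[(y_\mu-y_\nu)D_\kappa]=0$ for every pair $\mu\ne\nu$. There are two cases.
\begin{itemize}
\item If $\kappa=\mu$, the factor $(y_\mu-y_\nu)$ cancels the factor $(y_\nu-y_\mu)$ in the denominator. The result is $-Y_\mu(y_\mu)/\prod_{\lambda\ne\mu,\nu}(y_\lambda-y_\mu)$, which does not depend on $y_\nu$, so $\hdel_\nu$ kills it. The case $\kappa=\nu$ is symmetric.
\item If $\kappa\ne\mu,\nu$, the variables $y_\mu,y_\nu$ enter $D_\kappa$ only through $1/[(y_\mu-y_\kappa)(y_\nu-y_\kappa)]$. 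The partial-fraction identity
\[
\frac{y_\mu-y_\nu}{(y_\mu-y_\kappa)(y_\nu-y_\kappa)}=\frac{1}{y_\nu-y_\kappa}-\frac{1}{y_\mu-y_\kappa}
\]
writes $(y_\mu-y_\nu)D_\kappa$ as a function independent of $y_\mu$ plus a function independent of $y_\nu$. Hence the mixed derivative $\hdel_\mu\hdel_\nu$ vanishes.
\end{itemize}

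\emph{Necessity.} We first work locally, on a small product box (a product of intervals) avoiding the diagonals. On such a box, \eqref{eq:dequation3} for a fixed pair is equivalent to a splitting
\[
(y_\mu-y_\nu)D=A_{\mu\nu}+B_{\mu\nu},
\]
with $A_{\mu\nu}$ independent of $y_\mu$ and $B_{\mu\nu}$ independent of $y_\nu$.
\begin{itemize}
\item For $n=1$ there is nothing to prove: $D=Y_1(y_1)$.
\item For $n=2$ the splitting reads $(y_1-y_2)D=f(y_1)+g(y_2)$. This is exactly~\eqref{eq:SolutionOfdEquation3} with $Y_1=-f$ and $Y_2=g$.
\end{itemize}
For the inductive step, we use the pair $(\mu,n)$ and the normalisation $G_\mu := D-\sum_\kappa D_\kappa$, with the $Y_\kappa$ still to be chosen. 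The $Y_\kappa$ are determined by the one-variable data of the splittings. Concretely, we expect $Y_\mu$ to be recoverable from the $y_\mu$-dependence of $B_{\mu n}$ after multiplying by the remaining Vandermonde-type factors $\prod_{\lambda\ne\mu,n}(y_\lambda-y_\mu)$. Having fixed all $Y_\kappa$, the remainder $G=D-\sum_\kappa D_\kappa$ again solves~\eqref{eq:dequation3}. We then aim to show, by applying the induction hypothesis to the variables $y_1,\dots,y_{n-1}$ (treating $y_n$ as a parameter), that $G$ has the form~\eqref{eq:SolutionOfdEquation3} with coefficient functions that must in addition be independent of $y_n$ and have no genuine $y_\kappa$-dependence. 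This should force $G=0$ up to the ambiguity of the splittings, which is absorbed into the choice of $Y_\kappa$.

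An equivalent and perhaps cleaner route multiplies by the Vandermonde determinant $\prod_{\mu<\nu}(y_\nu-y_\mu)$. In that form~\eqref{eq:SolutionOfdEquation3} says that $D$ times the Vandermonde is a determinant with one column replaced by $(\pm Y_\mu)$. The conditions~\eqref{eq:dequation3} then translate into statements about which monomials in the $y$'s can mix, which may make the induction transparent.

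\emph{Main obstacle and globalisation.} We expect the main obstacle to be this inductive bookkeeping: showing that the $n$ one-variable functions extracted from the different pairwise splittings are mutually consistent, i.e.\ that the $Y_\mu$ read off from the pair $(\mu,\nu)$ does not depend on $\nu$. We would first try to settle this by comparing the splittings for $(\mu,\nu)$ and $(\mu,\lambda)$ through the three-variable identity above. Finally, to pass from boxes to an arbitrary connected, simply-connected domain avoiding the diagonals, we note that the representation~\eqref{eq:SolutionOfdEquation3} is unique up to nothing. Indeed, $Y_\mu$ is recovered from $D$ by a local formula, for example the coefficient of the simple pole structure in $y_\mu$. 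Hence the locally defined $Y_\mu$ agree on overlaps, and connectedness extends them to the whole domain.
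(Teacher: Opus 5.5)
Your sufficiency check is correct and is essentially the paper's argument: partial fractions make $(y_\mu-y_\nu)D$ a sum of terms, each independent of $y_\mu$ or of $y_\nu$.

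\textbf{Necessity is not proved.} The inductive step consists of expectations (``we expect $Y_\mu$ to be recoverable'', ``this should force $G=0$''). The point you flag as the main obstacle is producing one family $Y_\kappa$ compatible with all pairwise splittings, and that is precisely the content of the lemma. The proposed extraction is also not well defined. $B_{\mu n}$ is fixed only up to an arbitrary function of the variables other than $y_\mu,y_n$. After multiplication by $\prod_{\lambda\ne\mu,n}(y_\lambda-y_\mu)$ that function acquires $y_\mu$-dependence, so ``the $y_\mu$-dependence'' of the product does not single out a function of $y_\mu$ alone.

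The paper avoids this bookkeeping as follows.
\begin{itemize}
\item Integrating the $(\nu,n)$ equation once in $y_n$ gives $\hat{\partial}_\nu[(y_n-y_\nu)D]=E_\nu$, with $E_\nu$ independent of $y_n$.
\item Multiplying by $\prod_{\lambda\ne n,\nu}(y_n-y_\lambda)$ turns all these equations into one statement about $\Phi=D\prod_{\lambda\ne n}(y_n-y_\lambda)$. Its gradient in $y_1,\dots,y_{n-1}$ is a polynomial of degree $n-2$ in $y_n$ with $y_n$-independent coefficients.
\item Exactness coefficient by coefficient gives $\Phi=Y_n(y_n)-E$. The top coefficient gives $E_\nu=-\hat{\partial}_\nu\tilde D$.
\item The remaining closedness conditions are exactly the $(n-1)$-variable equation for $\tilde D$. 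The induction hypothesis applies to $\tilde D$ with no parameter.
\end{itemize}
$Y_n$ appears as the integration function, so no consistency check between different pairs ever arises.

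If you want to keep your route (induction in $y_1,\dots,y_{n-1}$ with $y_n$ as a parameter), you must do two things. First, show from the $(\kappa,n)$ equations that the resulting coefficients can be taken as $Z_\kappa(y_\kappa;y_n)=\big(Y_\kappa(y_\kappa)-Y_n(y_n)\big)/(y_n-y_\kappa)$. Second, choose them smoothly in $y_n$ despite the non-uniqueness described below. The proposal does neither.

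\textbf{The globalization rests on a false claim.} The representation is not unique. For every polynomial $p$ of degree at most $n-2$,
\[
\sum_{\mu=1}^n \frac{p(y_\mu)}{\prod_{\nu\ne\mu}(y_\nu-y_\mu)} = (-1)^{n-1}\,p[y_1,\dots,y_n] = 0 ,
\]
where $p[y_1,\dots,y_n]$ is a divided difference. So $Y_\mu\mapsto Y_\mu+p$, with the same $p$ for all $\mu$, leaves $D$ unchanged. Your own $n=2$ case shows this as $f\mapsto f+c$, $g\mapsto g-c$. Hence no local formula recovers $Y_\mu$, and representations on overlapping boxes can differ by such polynomials.

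Worse, patching only ever yields functions that \emph{locally} depend on $y_\mu$ alone, and the global conclusion genuinely fails on some simply-connected domains. For $n=2$, take the U-shaped domain $\big((0,3)\times(10,11)\big)\cup\big((0,1)\times(10,13)\big)\cup\big((2,3)\times(10,13)\big)$. Set $(y_1-y_2)D=\psi(y_2)$ on the right arm and $0$ elsewhere, where $\psi$ is smooth, vanishes on $(10,11)$, and has $\psi(12)\ne 0$. This $D$ is smooth and solves the equation. Evaluating at $(0.5,10.5)$, $(2.5,10.5)$, $(0.5,12)$ and $(2.5,12)$ rules out $(y_1-y_2)D=Y_2(y_2)-Y_1(y_1)$.

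So necessity should be stated and proved on product domains (boxes). The paper's argument tacitly uses this too, for example when it asserts that the functions annihilated by $\hat{\partial}_1\hat{\partial}_2$ are exactly $Y_1(y_1)-Y_2(y_2)$.
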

The restrictions on the domain are there only to avoid problems with dividing by $(y_\mu-y_\nu)$ and with applying the Poincar\'e lemma.

\begin{proof}
To start, we check that~\eqref{eq:SolutionOfdEquation3} is indeed a solution. Observe that
\begin{multline*}
  (y_\mu - y_\nu) D
  = \sum_{\ka=1,\ka\ne\mu,\nu}^n \frac{Y_\ka}{\prod_{\la=1,\la\ne\ka,\mu,\nu}^n (y_\ka-y_\la)} \left(\frac{1}{y_\ka-y_\mu} - \frac{1}{y_\ka-y_\nu}\right)
  \\ {}
    + \frac{Y_\mu}{\prod_{\la\ne\mu,\nu} (y_\mu-y_\la)}
    - \frac{Y_\nu}{\prod_{\la\ne\mu,\nu} (y_\nu-y_\la)}
  = 0 ,
\end{multline*}
which is annihilated by $\hdel_\mu \hdel_\nu$ because it expands as a 
sum where each term is independent of either $y_\mu$ or $y_\nu$.

The rest of proof, that all solutions must be of the desired form, is by induction. The initial non-trivial case is $n=2$, where there is a single independent equation: from the fact that the smooth functions annihilated by $\hdel_1 \hdel_2$ are exactly of the form $Y_1(y_1) - Y_2(y_2)$ for some smooth functions $Y_1, Y_2$, it follows that
\[
	D = \frac{Y_1}{y_1-y_2} + \frac{Y_2}{y_2-y_1} .
\]

Now, assume that the result holds for all dimensions up to and including $n-1$, fix $\mu = n$ and for each $\nu\ne n$ integrate~\eqref{eq:dequation3} once:
\[
	\hdel_\nu [(y_n-y_\nu) D] = E_\nu ,
\]
where $E_\nu$ is smooth and may depend on all variables except $y_n$.
Next, multiply each resulting equation by factors that commute
with its derivative to get
\begin{equation} \label{eq:D-E}
	\hdel_\nu \left[D \prod_{\la=1,\la\ne n}^n (y_n - y_\la)\right]
	=  E_{\nu} \prod_{\la=1,\la\ne n,\nu}^n (y_n - y_\la) .
\end{equation}
Note that the right-hand side is polynomial in $y_n$ (of degree $n-2$) and, by the equality, exact with respect to the variables $y_\la$, $\la\ne n$. Hence, there exists a smooth potential $-E$ for the right-hand side, whose dependence on $y_n$ is also only polynomial (up to degree $n-2$). Hence~\eqref{eq:D-E} implies that
\begin{equation} \label{eq:D-E-sol}
	\hdel_\nu \left[D \prod_{\la=1,\la\ne n}^n (y_n - y_\la) + E\right] = 0
	\quad \implies \quad
	D = \frac{Y_n(y_n) - E}{\prod_{\la=1,\la\ne n}^n (y_n - y_\la)} .
\end{equation}
As a polynomial in $y_n$, the right-hand side of~\eqref{eq:D-E} each
coefficient must be individuall exact. At order $y_n^{n-2}$ this yields $E_\nu
= -\hdel_\nu \tilde{D}$ and plugging that back into~\eqref{eq:D-E}, after applying $\hdel_\ka$ and antisymmetrizing over $\ka, \nu$, gives the integrability condition
\[
	\hdel_\ka \hdel_\nu [(y_\ka - y_\nu) \tilde{D}] = 0 ,
	\quad \forall \mu,\nu \ne n .
\]
But by our inductive assumption in dimension $n-1$, the most general solution must be of the form~\eqref{eq:SolutionOfdEquation3} and direct calculation shows that the potential $E$ can be taken of the form
\[
	E = \sum_{\ka=1,\ka\ne n}^n
		\frac{Y_\ka \prod_{\la=1,\la\ne n,\ka}^n (y_n-y_\la)}
			{\prod_{\la=1,\la\ne n,\ka}^n (y_\ka-y_\la)} .
\]
Finally, plugging the above formula into~\eqref{eq:D-E-sol} and simplifying concludes the proof.
\end{proof}

\section{Discussion} \label{sec:discuss}

The results of this article may be regarded as concrete intermediate
steps towards an IDEAL characterization of the higher-dimensional
Kerr-NUT-(A)dS family. We have shown that the curvature of these
spacetimes lies in the ``totally zero weight'' subspace singled out by
the stabilizer group $U(1)^n\rtimes S_n$ of the principal tensor, where
it takes the compact form~\eqref{eq:RiemannInBasis} built from the
blocks $h_{\mu\,ab}$ of the principal tensor $h_{ab}$ and their squares
$Q_{\mu\,ab}$ (Theorem~\ref{the:RiemannInBasis}). We then proved a no-go
result (Theorem~\ref{th:NoTwoFormFromRiemann}): no non-trivial $2$-form,
including $h_{ab}$ itself, can be built covariantly from monomials of the
undifferentiated Riemann tensor. Finally, we characterized the
Riemann-symmetric tensors that could be the curvature of a
Kerr-NUT-(A)dS metric by two conditions: an \emph{algebraic} one, the
integrability condition~\eqref{eq:IntegrabilityConditionCKYE} of the
conformal Killing-Yano equation, which confines the curvature to a
finite-parameter family $r(d)_{abcd}$
(Theorems~\ref{the:GeneralizedIntegrability}
and~\ref{th:IntegrabilityCondition}), and a \emph{differential} one, the
second Bianchi identity, which fixes the remaining parameters $d_\mu$
(Theorems~\ref{the:IIBianchiIdentity} and~\ref{th:SolutionOfdEquation}).
We close by discussing what stands between these results and a full
IDEAL characterization, and by outlining the several directions in which
the gaps might be bridged.

\paragraph{Recovering $h_{ab}$ from derivatives of the curvature.}
The most direct route to a characterization would be to recover the
principal tensor covariantly from the curvature: by the strong
uniqueness properties of $h_{ab}$ (Proposition~\ref{th:Uniqueness}), such
a formula, substituted into the equations defining a principal tensor,
would already constitute an IDEAL characterization. Our no-go
Theorem~\ref{th:NoTwoFormFromRiemann} rules this out using the
undifferentiated Riemann tensor alone, so one is forced to admit its
covariant derivatives. For instance, the simplest formula one can think of is
\begin{align}\label{eq:TwoForm2CovDR}
\mathcal{H}_{ab} \equiv \nabla_j R_{ab}{}^{cd} \nabla_{i} R_{cdj}{}^{i}.
\end{align}
It is worth investigating whether $h_{ab}$ could be recovered algebraically from $\mathcal{H}_{ab}$.

We stress, however, that Theorem~\ref{th:NoTwoFormFromRiemann} only excludes an
\emph{explicit} covariant formula expressing $h_{ab}$ as a concomitant of the
undifferentiated curvature; it does not exclude that the algebraic structure of
$R_{abcd}$ fixes $h_{ab}$ through an \emph{implicit} IDEAL identity, so that
the principal tensor would be a conditional invariant of the curvature,
mirroring a step in the IDEAL characterization of Kerr~\cite{FerrandoSaez2009}.
The quadratic expression~\eqref{eq:RiemannInBasis} suggests $h_{ab}$ could be
considered a \emph{square root} of $R_{abcd}$. However, the dependence on
$h_{ab}$ is through its blocks $h_{\mu\, ab}$, with non-trivial coefficients, so
it is not obvious what IDEAL implicit equation would reproduce the same
dependence.

It is an open question how our Theorem~\ref{th:NoTwoFormFromRiemann} relates to
the Cartan-Karlhede method, which in $4$ dimensions generically requires
covariant derivatives of the curvature up to second order to fix the frame
invariantly, even for the Kerr geometry~\cite[Ch.9]{Stephani}. Whether it is a
manifestation of the same underlying necessity of curvature derivatives, or
whether the presence of the principal tensor in higher dimensions leaves room
for a characterization using fewer of them, we must leave for future
investigation.

\paragraph{Eliminating the reference to $h_{ab}$ via minors.}
An alternative to constructing $h_{ab}$ explicitly is to detect its mere
existence intrinsically. As noted in Remark~\ref{rem:TowardsIdealIC}, the
integrability condition~\eqref{eq:IntegrabilityConditionCKYE} can be read
as a linear homogeneous system $\mathcal{J}[h]=0$ for the unknown $2$-form
$h_{ab}$, with coefficients constructed covariantly from $R_{abcd}$. The
solvability of such a rectangular system---and hence the existence of a
principal tensor---can be expressed through the vanishing of appropriate
matrix minors, in analogy with the vanishing of the determinant of a
square system. These minors are covariant, albeit complicated,
expressions in $R_{abcd}$, and their vanishing would furnish genuine
IDEAL conditions that no longer refer to $h_{ab}$ as an external field.
Working out these minors, and comparing the resulting conditions with the
compact curvature form~\eqref{eq:RiemannInBasis}, would be a natural next step.

\paragraph{Removing the background-connection assumption.}
The principal remaining shortcoming was emphasized in
Remark~\ref{rem:IIBianchiInterpretation}. Our analysis of the second
Bianchi identity in Section~\ref{sec:IIBianchiIdentity} treats
$r(d)_{abcd}$ as an arbitrary Riemann-symmetric tensor solving the integrability condition,
but takes $\nabla_e$ to be the Levi-Civita connection of a
\emph{fixed} background Kerr-NUT-(A)dS geometry, expressed through its
Darboux frame (Proposition~\ref{lem:RotOneForms}). The conclusion that
$r(d)_{abcd}$ must be the curvature of a Kerr-NUT-(A)dS metric therefore
still presupposes features of the background, and so is not yet a
sufficient condition for a characterization.

A natural further step would be to replace $\nabla_e$ by a general
torsion-free affine connection and ask whether the sectors of the
Bianchi identity that are automatically satisfied in our calculation
would then \emph{force} that connection to coincide with the background
Levi-Civita connection. Should this be the case, the combination of
Theorems~\ref{the:GeneralizedIntegrability},
\ref{th:IntegrabilityCondition}, \ref{the:IIBianchiIdentity}
and~\ref{th:SolutionOfdEquation} would yield sufficient conditions for
the curvature of a Kerr-NUT-(A)dS geometry in which $h_{ab}$ is the only
external geometric field; combined with the elimination of $h_{ab}$
sketched above, this would complete the IDEAL characterization.

\paragraph{Computational aspects and reusability of the method.}
Independently of the characterization problem that motivated them, the
technical tools developed here may be of use in efficient symbolic
computations of invariants on higher-dimensional Kerr-NUT-(A)dS
geometries. For instance, the introduction of the
$h$-basis~\eqref{eq:hBasis} leads to concise formulas for powers of the
Riemann tensor, as noted in~\cite[Thm.3]{MatejovThesis}. The sparse
structure of the $h$-basis also allowed to clarify the role of certain
simple commutation relations between the powers of Riemann and principal
tensors~\cite[Eq.(5.81)]{FKK} for these
geometries~\cite[Thm.4]{MatejovThesis}. It would be interesting to
explore further applications.

\section*{Acknowledgments}

The authors thank Alfonso Garc{\'i}a-Parrado, Pavel Krtou{\v{s}},
D{\'a}vid Kubiz{\v{n}}{\'a}k, and Joan J. Ferrando for discussions and
helpful comments. 
IK was partially supported by the Czech Science Foundation
(project GA25-15544S) and the Czech Academy of Sciences (Research Plan
RVO: 67985840). 
DM was partially supported by the Charles University Grant Agency (project GAUK No. 332622).

\printbibliography

\end{document}